\DeclareMathOperator{\lk}{lk}
\newcommand{\Z}{\mathbb{Z}}
\newcommand{\Q}{\mathbb{Q}}
\newcommand{\N}{\mathbb{N}}
\newcommand{\xist}{\xi_{\mathrm{st}}}
\DeclareMathOperator{\tb}{tb}
\DeclareMathOperator{\rot}{rot}
\DeclareMathOperator{\de}{d}
\newcommand{\Vast}{\bBigg@{2.5}} % or 4.3?   To make large symbol
\newtheorem*{rep@theorem}{\rep@title}
\newcommand{\newreptheorem}[2]{%
\newenvironment{rep#1}[1]{%
 \def\rep@title{#2 \ref{##1}}%
 \begin{rep@theorem}}%
 {\end{rep@theorem}}}
\newtheoremstyle{thm}{}{}{\itshape}{}{\bfseries}{}{ }{} %Thereom style
\newtheoremstyle{definition}{}{}{}{}{\bfseries}{}{ }{} %Definition style
\theoremstyle{thm}
\newtheorem{Theorem}{Theorem}[section]
\newtheorem{thm}[Theorem]{Theorem}
\newtheorem{lem}[Theorem]{Lemma}
\newtheorem{prop}[Theorem]{Proposition}
\newtheorem{cor}[Theorem]{Corollary}
\newtheorem*{Theorem-ohne}{Theorem}
\newtheorem{con}[Theorem]{Conjecture}
\newtheorem{ques}[Theorem]{Question}
\theoremstyle{definition}
\newtheorem{defi}[Theorem]{Definition}
\newtheorem{rem}[Theorem]{Remark}
\definecolor{amaranth}{rgb}{0.9, 0.17, 0.31} %dark red
\definecolor{carrotorange}{rgb}{0.93, 0.57, 0.13} %orange
\definecolor{citrine}{rgb}{0.89, 0.82, 0.04} %dark yellow
\definecolor{dartmouthgreen}{rgb}{0.05, 0.5, 0.06} %green
\definecolor{ballblue}{rgb}{0.13, 0.67, 0.8} %blue
\definecolor{ceruleanblue}{rgb}{0.16, 0.32, 0.75} %deeper blue
\definecolor{amethyst}{rgb}{0.6, 0.4, 0.8} %purple
\definecolor{amber}{rgb}{1.0, 0.75, 0.0} %amber
\definecolor{burlywood}{rgb}{0.87, 0.72, 0.53} %beigebrown
\numberwithin{equation}{section}
\begin{document}

%%%%%%%%%%%%%%%%%%%%%%%%%%%%% Title and authors %%%%%%%%%%%%%%%%%%%%%%%%%%%%%%%%%%%%

\title{Knots that share four surgeries} 

\author{Marc Kegel}
\address{Humboldt-Universit\"at zu Berlin, Rudower Chaussee 25, 12489 Berlin, Germany.}
\email{kegelmarc87@gmail.com}

\author{Lisa Piccirillo}
\address{University of Texas at Austin, 2515 Speedway, Austin TX, USA.}
\email{lisa.piccirillo@austin.utexas.edu}

%%%%%%%%%%%%%%%%%%%%%%%%%%%%% Abstract %%%%%%%%%%%%%%%%%%%%%%%%%%%%%%%%%%%%

\date{\today} % date on first page

\begin{abstract}
    Distinct knots $K, K'$ can sometimes share a common $p/q$-framed Dehn surgery. A folk conjecture held that for a fixed pair of knots, this can occur for at most one value of $p/q$. We disprove this conjecture by constructing pairs of distinct knots $K,K'$ that have common Dehn surgeries for four distinct slopes. 
    We also construct non-isotopic Legendrian knots $K,K'$ that have contactomorphic contact $(+1)$- and $(-1)$-surgeries, disproving an analogous conjecture in contact geometry.
\end{abstract}

\keywords{Dehn surgery, characterizing slopes, knot traces, knot invariants, Legendrian knots, contact surgery} 

\makeatletter
\@namedef{subjclassname@2020}{%
  \textup{2020} Mathematics Subject Classification}
\makeatother%For 2020

\subjclass[2020]{57R65; 57K10, 57R65, 57R58, 57K16, 57K14, 57K32} % Mathematical subject classification

% 57K10 Knot theory
% 57K14 Knot polynomials
% 57K16 Finite-type and quantum invariants, topological quantum field theories (TQFT)
% 57K32 Hyperbolic 3-manifolds
% 57M12 Low-dimensional topology of special (e.g., branched) coverings
% 57R58 Floer homology
% 57R65 Surgery and handlebodies

\maketitle

%\tableofcontents
%%%%%%%%%%%%%%%%%%%%%%%%%%%%%%%%%%%%%%%%%% INTRODUCTION %%%%%%%%%%%%%%%%%%%%%%%%%%%%%

%\section{Introduction}
Dehn surgery for a fixed rational slope $p/q$ can be thought of as a function from the set of knots in $S^3$ to the set of closed, oriented 3-manifolds. This foundational connection between knot theory and 3-manifold topology underlies many significant results in both areas, and motivates the study of these Dehn surgery functions themselves. This paper is concerned with the injectivity of these functions. 

\vspace{5pt}
Generically, the topology of any $p/q$-Dehn surgery closely resembles the topology of the knot complement, and the more slopes one considers, the more tightly the knot complement is constrained. It is surprising for distinct knots $K,K'$ to have even a single common $p/q$-Dehn surgery. However, such $K, K'$ and $p/q$ do exist; this was originally proven in \cite{Lickorish:surgery}. 

\vspace{5pt}
This contrasts to a folk theorem\footnote{The authors are unaware whether this appears in writing, for discussion see \cite[Section 1.7]{BKM_alt}.} that for any infinite collection of rationals $R$, a pair of distinct knots $K,K'$ cannot have common $r_i$ surgeries for all $r_i\in R$. Thus arises the question: how many surgeries can a pair of distinct knots share? The (folk) conjectured answer was one.   
The main result of this work is to disprove this.

\begin{thm} \label{thm:counterex}
	For every integer $n\in\Z\setminus\{-1,0,1\}$ there exist distinct knots $K_n$ and $K'_n$ with common $r$-framed Dehn surgery for all $r\in\{n+1,n+2,n+3,2n+2\}$.
\end{thm}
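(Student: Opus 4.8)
The plan is to produce $K_n$ and $K'_n$ from a single framed link in $S^3$, so that the four surgery coincidences become visible as Kirby-calculus identities, and then to separate the two knots by an invariant that is insensitive to the surgered manifolds. Writing $m = n+1$, the four slopes read $\{m,\, m+1,\, m+2,\, 2m\}$, and I note at the outset that these are pairwise distinct precisely when $m \notin \{0,1,2\}$, that is when $n \notin \{-1,0,1\}$: indeed $2m$ equals $m$, $m+1$, or $m+2$ exactly for $m = 0,1,2$, which is why those three values of $n$ are excluded. Thus the arithmetic of the hypothesis already predicts the shape of the construction --- three consecutive integer slopes together with the double of the smallest.

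First I would search for a link $L = K \cup c$ in $S^3$, with $c$ a (possibly multi-component) unknotted twisting sublink, such that the $2$-cusped manifold $S^3 \setminus L$ admits a symmetry exchanging two distinct fillings of the $c$-cusp that recover $K_n$ and $K'_n$ respectively. By Gordon--Luecke the complements of $K_n$ and $K'_n$ cannot themselves agree, so the coincidences must be genuinely accidental fillings and only finitely many can occur; the content is to arrange exactly four. A twisting circle $c_1$ with $\lk(K, c_1) = \pm 1$ is the natural source of the three consecutive slopes, since a single Rolfsen twist along such a circle shifts a surgery coefficient by $\lk(K,c_1)^2 = 1$ while altering the knot; the doubled slope $2m$ should instead arise from a separate feature of $L$, such as an essential annulus or a second twisting region whose framing forces a coincidence at $2m$ rather than at an integer translate.

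The computational core is then to verify, for each of the four slopes $r$, an explicit chain of Rolfsen twists, slam-dunks, and handleslides carrying the Kirby diagram of $S^3_r(K_n)$ to that of $S^3_r(K'_n)$. I would organize this so that, after introducing $c$ and sliding, each diagram acquires a manifest involutive symmetry implementing the homeomorphism, with the four cases differing only in the bookkeeping of framings. I expect this step to be lengthy but essentially routine once the correct $L$ is identified.

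The main obstacle is proving $K_n \not\cong K'_n$ for every admissible $n$. The surgery homeomorphisms are engineered to make the two knots as alike as possible, so they offer no leverage, and I must compute an invariant not determined by the surgered manifolds. I would try the Alexander polynomials first: if $\Delta_{K_n}(t) \neq \Delta_{K'_n}(t)$ as a polynomial identity in $t$ and $n$, distinctness follows at once and uniformly; failing that, I would pass to a finer invariant such as the Jones polynomial, the knot Floer homology, or a count of representations of the two knot groups. The essential subtlety is to obtain a \emph{uniform} statement valid for all $n \notin \{-1,0,1\}$, rather than checking distinctness one value of $n$ at a time.
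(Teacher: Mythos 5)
Your proposal is a research plan rather than a proof, and the gap is not merely that the work is unperformed: the one concrete mechanism you propose is self-defeating. If a symmetry of the two-cusped manifold $S^3\setminus(K\cup c)$ carried the $c$-filling slope producing $K_n$ to the one producing $K'_n$, that symmetry would extend over the filled manifolds and give a homeomorphism $S^3\setminus K_n\cong S^3\setminus K'_n$, whence $K_n\cong K'_n$ (up to mirrors) by Gordon--Luecke. You acknowledge this tension ("the coincidences must be genuinely accidental fillings") but never resolve it, and resolving it is exactly the content of the theorem. The paper's construction avoids the contradiction by using a \emph{three}-component Hopf RBG link $L$: the knots are $K_B=L(0,*,n)$ and $K_G=L(0,n,*)$, and instead of one ambient symmetry there are three separate diffeomorphisms $L(0,n,s)\cong L(0,s,n)$, one for each $s\in\{-1,0,+1\}$, each proved by its own chain of handle slides, slam dunks, and blow-downs (Lemma~\ref{lem:sym_hopf}) and valid only at those fillings; the fourth coincidence is the tautological one at $L(0,n,n)$. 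Your Rolfsen-twist mechanism also does not produce what you want: twisting along a circle of linking number $\pm1$ identifies $r$-surgery on one knot with $(r\mp1)$-surgery on the \emph{twisted} knot, i.e.\ it shifts the slope, so by itself it never yields two fixed knots sharing the \emph{same} slope, let alone three consecutive ones; the framing bookkeeping that makes the four slopes come out equal (computed in Figure~\ref{fig:RGBtangles} to be $(n+1,n+2,n+3,2n+2)$) is precisely what the RBG formalism supplies and what your sketch is missing.

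The distinguishing step is also not "invariant shopping to be done later": for knots arising from this kind of construction your first choices provably fail. The paper's pairs are related by a generalized mutation and have isomorphic knot Floer and Khovanov homologies, hence identical Alexander and Jones polynomials (Remark~\ref{rem:invariants}); the paper is forced to use the degree-zero part of the HOMFLYPT polynomial and to build an inductive twisting machinery (Appendix~\ref{app}) to obtain a formula uniform in $n$. You correctly flag the need for uniformity, and your arithmetic observation that the slopes $\{n+1,n+2,n+3,2n+2\}$ are pairwise distinct exactly when $n\notin\{-1,0,1\}$ is accurate, but it is not the paper's reason for excluding those values: there the excluded cases are the ones where the two constructed knots are actually isotopic (Remark~\ref{rem:tangles}).
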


For a more precise statement, we refer to Theorem~\ref{thm:main_detailed}. An example of a family of knots as in Theorem~\ref{thm:counterex} that share four surgeries is shown in Figure~\ref{fig:example}. Curiously, our $K_n$ and $K_n'$ have many other commonalities; they have the same 2-fold branched covers (Theorem~\ref{thm:covers}), the same knot Floer and Khovanov homologies, as well as many common hyperbolic invariants \cite{data}. This may be of independent interest. 

\begin{figure}[htbp] 
	\centering
	\def\svgwidth{0,85\columnwidth}
	%% Creator: Inkscape 1.2.1 (9c6d41e4, 2022-07-14), www.inkscape.org
%% PDF/EPS/PS + LaTeX output extension by Johan Engelen, 2010
%% Accompanies image file '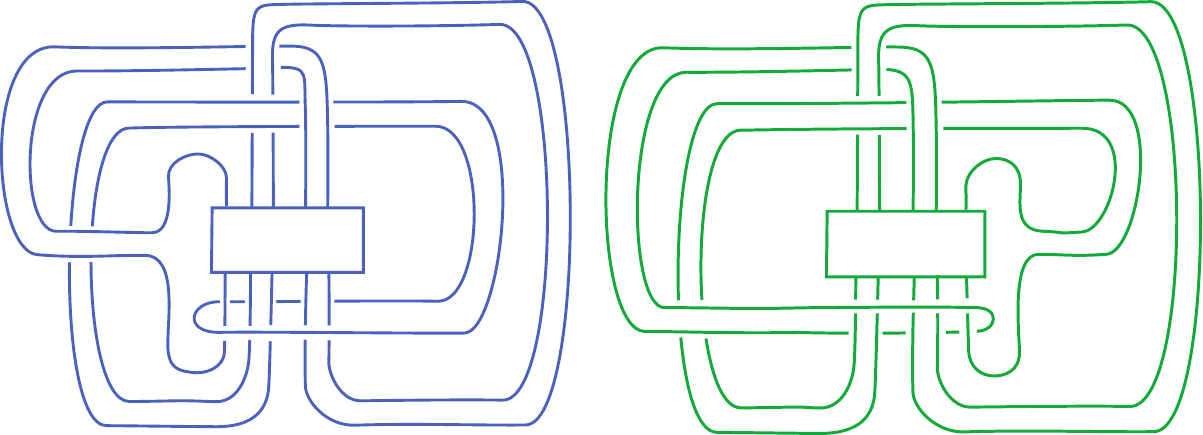' (pdf, eps, ps)
%%
%% To include the image in your LaTeX document, write
%%   \input{<filename>.pdf_tex}
%%  instead of
%%   \includegraphics{<filename>.pdf}
%% To scale the image, write
%%   \def\svgwidth{<desired width>}
%%   \input{<filename>.pdf_tex}
%%  instead of
%%   \includegraphics[width=<desired width>]{<filename>.pdf}
%%
%% Images with a different path to the parent latex file can
%% be accessed with the `import' package (which may need to be
%% installed) using
%%   \usepackage{import}
%% in the preamble, and then including the image with
%%   \import{<path to file>}{<filename>.pdf_tex}
%% Alternatively, one can specify
%%   \graphicspath{{<path to file>/}}
%% 
%% For more information, please see info/svg-inkscape on CTAN:
%%   http://tug.ctan.org/tex-archive/info/svg-inkscape
%%
\begingroup%
  \makeatletter%
  \providecommand\color[2][]{%
    \errmessage{(Inkscape) Color is used for the text in Inkscape, but the package 'color.sty' is not loaded}%
    \renewcommand\color[2][]{}%
  }%
  \providecommand\transparent[1]{%
    \errmessage{(Inkscape) Transparency is used (non-zero) for the text in Inkscape, but the package 'transparent.sty' is not loaded}%
    \renewcommand\transparent[1]{}%
  }%
  \providecommand\rotatebox[2]{#2}%
  \newcommand*\fsize{\dimexpr\f@size pt\relax}%
  \newcommand*\lineheight[1]{\fontsize{\fsize}{#1\fsize}\selectfont}%
  \ifx\svgwidth\undefined%
    \setlength{\unitlength}{576.9648348bp}%
    \ifx\svgscale\undefined%
      \relax%
    \else%
      \setlength{\unitlength}{\unitlength * \real{\svgscale}}%
    \fi%
  \else%
    \setlength{\unitlength}{\svgwidth}%
  \fi%
  \global\let\svgwidth\undefined%
  \global\let\svgscale\undefined%
  \makeatother%
  \begin{picture}(1,0.36182006)%
    \lineheight{1}%
    \setlength\tabcolsep{0pt}%
    \put(0,0){\includegraphics[width=\unitlength,page=1]{example.pdf}}%
    \put(0.74418369,0.15273825){\color[rgb]{0.07058824,0.66666667,0.21960784}\makebox(0,0)[lt]{\lineheight{1.25}\smash{\begin{tabular}[t]{l}$n$\end{tabular}}}}%
    \put(0.23274186,0.15485908){\color[rgb]{0.3254902,0.4,0.75686275}\makebox(0,0)[lt]{\lineheight{1.25}\smash{\begin{tabular}[t]{l}$n$\end{tabular}}}}%
  \end{picture}%
\endgroup%

	\caption{Two non-isotopic knots $K_n$ and $K'_n$ that share four integer surgeries. The box with the $n$ denotes $n$-full twists.}
	\label{fig:example}
\end{figure}
\vspace{5pt}
We don't expect that four slopes is maximal; we conjecture that there is no bound on the number of slopes a pair of distinct knots can share. Some restrictions on which pairs of knots can have many common surgeries were recently given in~\cite{BKM_alt}, which shows that for any given distinct knots $K$ and $K'$ there is an explicit constant $C(K,K')$ such that for any slope $p/q\in\Q$ with $|q|>C(K,K')$ the $p/q$-surgeries are not homeomorphic.

Injectivity questions about Dehn surgery functions also arise in the context of Legendrian knot theory. In~\cite{Casals_Etnyre_Kegel} it was conjectured that two Legedrian knots in the standard contact $3$-sphere $(S^3,\xist)$ with contactomorphic contact $(+1)$- and contact $(-1)$-surgeries are necessarily Legendrian isotopic. 
We also construct counterexamples to this conjecture.

\begin{thm} \label{thm:counterex_Legendrian}
	There exist non-isotopic Legendrian knots $K$ and $K'$ in $(S^3,\xist)$ with contactomorphic contact $r$-surgery for all $r\in\{-1,+1\}$.
\end{thm}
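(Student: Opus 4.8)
The plan is to lift the smooth construction of Theorem~\ref{thm:counterex} to the contact category, using the standard dictionary between contact and smooth surgery. Recall that if $L$ is a Legendrian representative of a knot $K$ with Thurston--Bennequin invariant $\tb(L)=t$, then contact $(-1)$-surgery on $L$ is smoothly $(t-1)$-surgery on $K$, while contact $(+1)$-surgery on $L$ is smoothly $(t+1)$-surgery on $K$. The shared slopes produced by Theorem~\ref{thm:counterex} include the three consecutive integers $n+1,n+2,n+3$, so I would fix a suitable value of $n$ and choose Legendrian representatives $L,L'$ of $K_n,K_n'$ with $\tb(L)=\tb(L')=n+2$. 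Then contact $(-1)$-surgery on $L$ (resp.\ $L'$) is smoothly $(n+1)$-surgery on $K_n$ (resp.\ $K_n'$), and contact $(+1)$-surgery is smoothly $(n+3)$-surgery; in both cases the underlying smooth three-manifolds agree by Theorem~\ref{thm:counterex}. Because $K_n$ and $K_n'$ are smoothly non-isotopic, any Legendrian representatives of them are automatically non-isotopic as Legendrian knots, so the non-isotopy in the statement is free and the entire substance of the theorem is the promotion of these smooth homeomorphisms to contactomorphisms.

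First I would pin down $n$ by requiring that $\tb=n+2$ be realized by honest Legendrian fronts of both knots; since the Thurston--Bennequin inequality only obstructs large positive framings while stabilization always lowers $\tb$, a small (and likely negative) choice of $n$ suffices, and I would record explicit fronts for $L$ and $L'$ with the prescribed $\tb$ and a common rotation number (the latter being necessary already at the level of the homotopy invariant $d_3$ of the surgered contact structures). Next I would revisit the smooth diffeomorphism underlying Theorem~\ref{thm:counterex}, which is produced by an explicit sequence of Kirby moves on a common surgery presentation of annulus-twist / RBG type. The goal is to realize this same sequence entirely in the Legendrian category: replace each smooth handle slide and blow-down by a Legendrian handle slide and a contact surgery cancellation in the sense of Ding--Geiges, so that, starting from either $L$ or $L'$, one arrives at one and the same contact surgery diagram. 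Equality of the terminal diagrams then yields the desired contactomorphisms.

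The contact $(-1)$-surgery, i.e.\ the slope $n+1$ case, should be the more tractable half: contact $(-1)$-surgery is a Weinstein handle attachment and its Legendrian Kirby calculus is well behaved, so a faithful Legendrian lift of the smooth moves ought to exhibit the two $(n+1)$-surgeries as the same contact manifold. The hard part will be the contact $(+1)$-surgery, corresponding to slope $n+3$. Here the surgery is not a Stein handle and the calculus is delicate; the standard remedy is first to trade the contact $(+1)$-surgery for a contact $(-1)$-surgery on a Legendrian pushoff via the contact surgery cancellation lemma, producing a diagram built solely from $(-1)$-surgeries, and only then to manipulate. The principal technical obstacle is to verify that the stabilizations and framings forced by this conversion remain compatible with the symmetry of the RBG presentation, so that the $(+1)$-surgeries on $L$ and $L'$ still reduce to a common contact diagram rather than merely to homeomorphic underlying manifolds. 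Once both the $(-1)$- and the $(+1)$-sequences of Legendrian moves are shown to terminate in identical contact surgery diagrams, contactomorphism of the two surgeries follows for each $r\in\{-1,+1\}$, completing the proof.
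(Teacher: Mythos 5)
Your reduction of non-isotopy to the smooth case is fine (non-isotopic smooth knots can never be Legendrian isotopic), and your slope arithmetic is correct: contact $(\pm1)$-surgery on a representative with $\tb=n+2$ hits the shared smooth slopes $n+1$ and $n+3$ from Theorem~\ref{thm:counterex}. But the core of your plan has two genuine gaps. First, choosing Legendrian representatives $L,L'$ of $K_n,K_n'$ merely by matching $\tb$ and rotation number gives no control over the contact structures on the surgered manifolds: Legendrian knots with equal classical invariants need not have contactomorphic contact surgeries, so the entire content of the theorem is hidden in selecting the ``right'' representatives, and your proposal offers no mechanism for doing so. The paper sidesteps this by building the Legendrian knots from a Legendrian Hopf RBG link ($R$, $B$, $G$ Legendrian unknots with $\tb=-1$), so that both knots live in a single contact surgery diagram from the start; with that setup the $(-1)$-case is free, since $K_G(-1)=L(+1,-1,-1)=K_B(-1)$ is literally the same diagram, with no moves required at all.

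Second, and more seriously, your plan to replace each smooth handle slide and blow-down by a Legendrian handle slide plus a cancellation cannot close up in the $(+1)$-case. The smooth symmetry of Lemma~\ref{lem:sym_hopf} is proved by blow-downs and slam dunks, which have no contact-Kirby-calculus analogues in this situation. The paper's Lemma~\ref{lem:sym_hopf_Legendrian} does begin with contact handle slides, but its decisive middle step is not a calculus move: one computes that contact $(+1)$-surgery on $R$ together with contact $(+1)$-surgery on its meridian yields the overtwisted structure $\xi_1$ on $S^3$ with $\de_3=1$, checks that the two remaining knots are \emph{loose} Legendrian knots with the same classical invariants, and invokes the coarse classification of loose Legendrian knots in overtwisted structures to obtain the contactomorphism. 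Without this input (or a substitute for it), your sequence of Legendrian moves will terminate in two diagrams that you cannot identify. Relatedly, your proposed ``standard remedy'' --- trading the contact $(+1)$-surgery for a contact $(-1)$-surgery on a pushoff via the cancellation lemma --- is not a valid move: the cancellation lemma removes a $(+1)/(-1)$ pair on a knot and its pushoff; it does not convert a $(+1)$-surgery into negative surgeries, and no such conversion can exist in general, since contact $(-1)$-surgeries on $(S^3,\xist)$ preserve Stein fillability while the relevant $(+1)$-surgeries here produce overtwisted manifolds.
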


An example of a pair of such Legendrian knots $K$ and $K'$ is shown in Figure~\ref{fig:LegendrianHopfRBGexample}.

\subsection*{Outline}
In Section~\ref{sec:construction} we will give a method of constructing knots that share four surgeries using RBG links. The casual reader need not read past this section.
In Section~\ref{sec:distinguish}, we start the discussion of distinguishing such pairs of knots. We will discuss a long list of invariants (double branched covers, knot Floer and Khovanov homology groups, and hyperbolic invariants such as volume, cusp shape, and systole) that do not appear to distinguish our knots. We will also discuss some invariants which, via computer experimentation, apparently do (HOMFLYPT polynomials, the canonical triangulations, or the number of covers of a fixed degree of the knot exterior).
Theorem~\ref{thm:counterex_Legendrian} is proven in Section~\ref{sec:legendrian}. 
In Section~\ref{sec:questions} we give some open problems.
In Appendix~\ref{app}, we actually distinguish our pairs using HOMFLYPT polynomials. In fact, we develop a general method for computing parts of the HOMFLYPT polynomial of infinite families of knots that differ by twisting some number of strands, which may be of independent interest. 

\subsection*{Code and data}
Computations and some additional code and data can be accessed at the author's webpages~\cite{data} or can be downloaded as additional files from the arXiv version of this article. For the computations, we use code and data from \cite{Regina,SnapPy,KLO,sagemath,KnotJob,knotatlas,Searching_ribbon,Gridlink,Celoria_Grid,Dunfield_census,Szabo_calculator,Dunfield_exterior_to_link,Beker_Kegel_braid_positive,BKM_alt,BKM_QA,Kegel_Weiss,ABG+19,volt}.

\subsection*{Conventions}
We work in the smooth category. All manifolds, maps, and ancillary objects are assumed to be smooth. When we say manifolds are `the same' we mean orientation preserving diffeomorphic. Surgery coefficients of smooth links in $S^3$ are measured with respect to the Seifert longitude. Surgery coefficients of Legendrian knots are with respect to the contact longitude. 

\subsection*{Acknowledgments}

We are happy to thank Ken Baker, John Luecke, Duncan McCoy, Burak Ozbagci, and Claudius Zibrowius for useful comments and their interest in this work. Special thanks go to Jonathan Spreer for his contributions to writing code that searches for hyperbolic knots in closed manifolds. 

\subsection*{Grant support}

MK is funded by the DFG, German Research Foundation, (Project: 561898308). LP was supported in part by the Clay Foundation, the Sloan Foundation, and the Simons Collaboration grant `New Structures in Low-Dimensional Topology'. 

\section{Constructing knots that share 4-surgeries}\label{sec:construction}
In this section, we use RBG links to construct pairs of knots that share four integral surgeries. RBG links were introduced in \cite{LPshake} and are usually used to produce a pair of knots with a single common surgery.

\begin{defi}
A $3$-component link $L$ in $S^3$ with components $R$, $B$, and $G$, is called \textit{Hopf RBG link} if the following hold:
\begin{enumerate}
	\item $R\cup B$, $R\cup G$, and $B\cup G$ are Hopf links.
	\item The Seifert disks $D_R$, $D_B$, and $D_G$ of $R$, $B$, and $G$ can be isotoped such that the following three conditions hold simultaneously:
	\begin{enumerate}
		\item $D_B$ and $D_G$ intersect in a single clasp singularity,
		\item $D_B$ intersects $D_R$ in a single clasp singularity and finitely many ribbon singularities, and
		\item $D_G$ intersects $D_R$ in a single clasp singularity and finitely many ribbon singularities.
	\end{enumerate}
\end{enumerate}
\end{defi}

\begin{figure}[t] 
	\centering
	\def\svgwidth{0,7\columnwidth}
	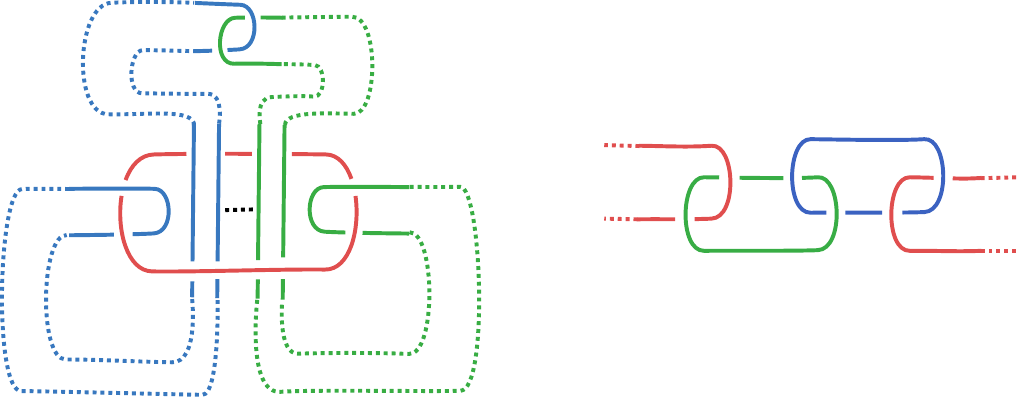
	\caption{Two schematic views on a Hopf RBG link.}
	\label{fig:RGBabstract}
\end{figure}

A schematic picture of a Hopf RBG link is shown in Figure~\ref{fig:RGBabstract}. For a concrete simple example of a (non-trivial) Hopf RBG link we refer to Figure~\ref{fig:exampleHopfRBG}. After an isotopy, we can always assume that $D_B$ intersects $D_R$ in a single clasp singularity or that $D_G$ intersects $D_R$ in a single clasp singularity. But in general, both are not possible simultaneously, see for example Figure~\ref{fig:exampleHopfRBG}.

We will show that every Hopf RBG link induces pairs of knots that share four surgeries. For that, we will need the following lemma.

\begin{lem}\label{lem:S3surgeries}
	Surgery on a Hopf link with surgery coefficients $p/q$ and $r/s$ yields $S^3$ if and only if $pr-qs=\pm1$. 
\end{lem}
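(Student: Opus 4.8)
The plan is to read the surgered manifold directly off the geometry of the Hopf link exterior, which is the thickened torus $T^2\times[0,1]$. Its two boundary tori are the neighborhood boundaries $\partial N(H_1)$ and $\partial N(H_2)$ of the two components, and the defining feature of the Hopf link is that meridians and longitudes get interchanged across the exterior: in $H_1$ of the exterior one has $\lambda_1\simeq\mu_2$ and $\lambda_2\simeq\mu_1$, up to a sign dictated by the $\pm1$ linking number. Filling the two ends along the slopes $p\mu_1+q\lambda_1$ and $r\mu_2+s\lambda_2$ produces a closed $3$-manifold $Y$, and I would extract two things from this description: its first homology and its global topological type.

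First I would compute $H_1(Y)$. Starting from $H_1(\text{exterior})=\mathbb{Z}\langle\mu_1\rangle\oplus\mathbb{Z}\langle\mu_2\rangle$ and killing the two filling slopes, the meridian–longitude swap turns the relations into $p\mu_1+q\mu_2=0$ and $s\mu_1+r\mu_2=0$ (the linking-number sign just toggles the signs of $q$ and $s$). The relation matrix is then $\left(\begin{smallmatrix}p&q\\ s&r\end{smallmatrix}\right)$, whose determinant is $pr-qs$ irrespective of that sign ambiguity, so $H_1(Y)$ is finite of order $|pr-qs|$, and infinite precisely when $pr-qs=0$. In particular $H_1(Y)$ is trivial if and only if $pr-qs=\pm1$.

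The remaining, and conceptually key, step is to promote ``trivial homology'' to ``is $S^3$.'' For this I would use that $Y$ is assembled by attaching solid tori to both ends of $T^2\times[0,1]$: attaching the first solid torus to $T^2\times\{0\}$ (the collar absorbs into the solid torus) again yields a solid torus, and attaching the second solid torus to the remaining boundary exhibits $Y$ as a union of two solid tori along a common boundary torus, i.e. a genus-one Heegaard splitting. Hence $Y$ is always a lens space, allowing the degenerate cases $S^1\times S^2$ (when $pr-qs=0$) and $S^3$. Since $L(m,\ell)$ has $|H_1|=m$ and $S^3$ is the unique lens space with trivial first homology, the homology computation settles both directions simultaneously: $Y\cong S^3$ exactly when $|H_1(Y)|=1$, that is, when $pr-qs=\pm1$.

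The main obstacle I anticipate is precisely this last implication, since vanishing $H_1$ does not force $S^3$ for a general $3$-manifold; the argument genuinely relies on first recognizing every such surgery as a lens space. I would therefore take care to justify the meridian–longitude interchange explicitly (it is what makes the relation matrix have the clean determinant $pr-qs$ rather than a coefficient-dependent quantity) and to note that $\gcd(p,q)=\gcd(r,s)=1$, so the filling slopes are primitive and the fillings really are solid-torus attachments, making the Heegaard-splitting description valid.
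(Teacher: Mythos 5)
Your proposal is correct and follows essentially the same route as the paper's proof: recognize every Dehn surgery on the Hopf link as a lens space (in the broad sense, via the genus-one Heegaard splitting coming from the $T^2\times[0,1]$ exterior), observe that $S^3$ is detected among such manifolds by trivial first homology, and compute $|H_1|=|pr-qs|$ using the meridian--longitude interchange. The only difference is that you supply explicit justifications for the lens-space claim and the homology computation, which the paper simply asserts.
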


\begin{proof}
	Any Dehn surgery on the Hopf link yields a lens space. Among lens spaces, we can recognize $S^3$ by the vanishing of its first homology group. Since $|pr-qs|$ yields the order of the first homology of a $(p/q,r/s)$-surgery on a $2$-component link, the claim follows.
\end{proof}

\begin{cor} \label{cor:knots} To any Hopf RBG link $L$, pair of rationals $p/q$ and $r/s$  with $pr-qs=\pm1$, and $C\in\{R,B,G\}$, we can naturally associate a knot $K_C^{p/q,r/s}$ by considering the image of $C$  in the $S^3$ obtained by performing $p/q$ and  $r/s$ surgery on $L\setminus C$. \qed
\end{cor}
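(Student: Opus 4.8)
The plan is to reduce the statement directly to Lemma~\ref{lem:S3surgeries}, so that essentially all of the work has already been done. The one observation to make is that for each choice of distinguished component $C\in\{R,B,G\}$, the complementary sublink $L\setminus C$ is exactly one of $R\cup B$, $R\cup G$, or $B\cup G$. By condition~(1) in the definition of a Hopf RBG link, each of these two-component sublinks is a Hopf link. Hence, regardless of which $C$ we delete, $L\setminus C$ is a Hopf link and we are precisely in the setting of Lemma~\ref{lem:S3surgeries}.

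First I would fix $C$ and assign the coefficients $p/q$ and $r/s$ to the two components of the Hopf link $L\setminus C$. Since by hypothesis $pr-qs=\pm1$, Lemma~\ref{lem:S3surgeries} guarantees that the closed oriented manifold $Y$ obtained by performing $(p/q,r/s)$-surgery on $L\setminus C$ is $S^3$. Because $C$ is disjoint from $L\setminus C$, it is untouched by the surgery and persists as an embedded, unknotted-in-the-surgery-region circle inside $Y$; that is, $C$ descends to a knot in a manifold $Y\cong S^3$. Transporting this knot along an orientation-preserving identification $Y\cong S^3$ produces the desired $K_C^{p/q,r/s}\subset S^3$.

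The only point requiring comment is the word \emph{naturally}: I would note that the isotopy class of $K_C^{p/q,r/s}$ does not depend on the chosen identification $Y\cong S^3$, since any two orientation-preserving self-diffeomorphisms of $S^3$ are isotopic, and so the induced maps carry the image of $C$ to isotopic knots. I do not anticipate any genuine obstacle here: the entire content is the recognition that deleting a single component of a Hopf RBG link leaves a Hopf link, after which Lemma~\ref{lem:S3surgeries} applies verbatim and the surviving component automatically yields a well-defined knot in $S^3$.
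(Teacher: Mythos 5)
Your proof is correct and is exactly the argument the paper has in mind: the corollary is stated with a \qed precisely because, once one notes that $L\setminus C$ is a Hopf link by condition~(1) of the definition, Lemma~\ref{lem:S3surgeries} applies verbatim and the surviving component $C$ gives a knot in the resulting $S^3$. Your additional remark that the knot is well defined up to isotopy because orientation-preserving self-diffeomorphisms of $S^3$ are isotopic to the identity (Cerf) is a correct and appropriate way to justify the word ``naturally,'' and matches the paper's implicit reasoning.
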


In this paper, we will mainly be concerned with the knots $K_G^{0,n}$ and $K_B^{0,n}$.

\noindent By convention, we always choose an orientation of the Hopf RBG link such that
\begin{align*}
	\lk(R,B)=1=\lk(R,G).
\end{align*}
Thus the linking number $\lk(B,G)$ is either $+1$ or $-1$. Then the precise version of our main theorem reads as follows. 

\begin{thm} \label{thm:main_detailed}
	For any Hopf RBG link L and integer $n\in\Z$, the knots $K_G^{0,n}$ and $K_B^{0,n}$ have orientation-preserving diffeomorphic surgeries for the following set of surgery slopes.
	\begin{itemize}
	\item $(n+1,n+2,n+3,2n+2)$, if $\lk(B,G)=-1$, and
	\item $(n-3,n-2,n-1,2n-2)$, if $\lk(B,G)=1$.
	\end{itemize}
\end{thm}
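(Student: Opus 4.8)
The plan is to translate both families of surgeries into surgeries on the full link $L=R\cup B\cup G$, and then to recognize the two resulting surgery diagrams as the same $3$-manifold by Kirby calculus.

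First I would compute the framing translation. Keep the convention $\lk(R,B)=\lk(R,G)=1$ and write $\varepsilon=\lk(B,G)\in\{\pm1\}$. The knot $K_G^{0,n}$ lives in the $S^3$ obtained by surgering $R\cup B$, so an $m$-framed surgery on $K_G^{0,n}$ (measured against its Seifert longitude in that $S^3$) is a surgery on all of $L$, with the coefficient on $G$ corrected for the framing change induced by the two surgeries on $R\cup B$. Computing in the first homology of the relevant link complements, using only the three pairwise linking numbers, I expect to find that the new Seifert longitude of $G$ differs from the old one by $(2\varepsilon-n)$ meridians, so that
\[
S^3_m\!\left(K_G^{0,n}\right)=S^3_{(0,\,n,\,m-n+2\varepsilon)}(R,B,G),\qquad S^3_m\!\left(K_B^{0,n}\right)=S^3_{(0,\,m-n+2\varepsilon,\,n)}(R,B,G).
\]
Writing $c=m-n+2\varepsilon$, the four slopes in the theorem are exactly those with $c\in\{-1,0,1,n\}$ (for $\varepsilon=-1$ these are $m\in\{n+1,n+2,n+3,2n+2\}$, and for $\varepsilon=+1$ they are $\{n-3,n-2,n-1,2n-2\}$), so the theorem reduces to the single assertion
\[
S^3_{(0,n,c)}(L)\cong S^3_{(0,c,n)}(L)\qquad\text{for }c\in\{-1,0,1,n\};
\]
that is, swapping the surgery coefficients on $B$ and $G$ must not change the surgered manifold for these values of $c$.

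The diagonal case $c=n$ is free: both sides are literally $S^3_{(0,n,n)}(L)$, which already accounts for the slope $2n+2$ (resp.\ $2n-2$). For the three remaining values $c\in\{-1,0,1\}$ I would argue by Kirby calculus, exploiting that $R$, $B$, $G$ are pairwise Hopf-linked unknots and that the clasp/ribbon conditions defining a Hopf RBG link are symmetric under interchanging $B$ and $G$. For $c=\pm1$ the plan is to blow down the $\pm1$-framed unknot: blowing down $G$ in $S^3_{(0,n,\pm1)}(L)$ and blowing down $B$ in $S^3_{(0,\pm1,n)}(L)$ each produce a two-component framed link whose framings and linking number agree (the framing shifts and the linking change are governed by $\lk(R,G)=\lk(R,B)=1$ and $\varepsilon^2=1$), and the $B\leftrightarrow G$ symmetry of the clasp/ribbon data should identify the two resulting links. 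For $c=0$, where no blow-down is available, I would instead realize the swap by a handle slide over the $0$-framed component $R$ (or by first sliding to convert the framing to $\pm1$ and then blowing down), again matching the two diagrams using the symmetric intersection pattern of the Seifert disks.

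I expect the main obstacle to be precisely this last step: verifying that, after the blow-downs and slides, the two framed links literally coincide in $S^3$. The framing translation in the first step is routine homological bookkeeping, and the reduction to a coefficient swap is forced; the real content is geometric, namely controlling how the single clasp between $D_B$ and $D_G$, the clasps of $D_B,D_G$ with $D_R$, and the ribbon intersections with $D_R$ transform under the Kirby moves, and checking that the symmetry of these conditions is exactly strong enough to make the swap hold for $c\in\{-1,0,1\}$ but for no other value of $c$. The latter restriction is not optional: if the swap held for all $c$, then $K_G^{0,n}$ and $K_B^{0,n}$ would share every slope and hence be isotopic, so a correct argument must break down away from $\{-1,0,1,n\}$.
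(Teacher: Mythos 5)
Your reduction is the same as the paper's, and the bookkeeping half of it is correct: the four slopes come from the trivial coincidence $L(0,n,n)$ plus the swap $L(0,n,c)\cong L(0,c,n)$ for $c\in\{-1,0,1\}$, and your homological framing translation $c=m-n+2\varepsilon$ is right and reproduces exactly the two lists of slopes (the paper obtains the same translation by explicit Kirby moves in Figure~\ref{fig:RGBtangles}, so your computation is a clean substitute for that step). The swap statement you isolate is precisely the paper's Lemma~\ref{lem:sym_hopf}, and the move types you name (blow-downs when the coefficient is $\pm1$, a slide/slam-dunk argument when it is $0$) are the ones the paper uses.

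The gap is in how you close the swap. You assert that after the blow-downs (or slides) the two diagrams are identified by ``the $B\leftrightarrow G$ symmetry of the clasp/ribbon data,'' but no such symmetry exists: the \emph{definition} of a Hopf RBG link is symmetric in $B$ and $G$, while a given Hopf RBG link admits no diffeomorphism exchanging $B$ and $G$ in general --- if it did, $K_B^{0,n}$ and $K_G^{0,n}$ would be isotopic and the theorem vacuous --- and the intersection data need not even be combinatorially symmetric, since $D_B\cap D_R$ and $D_G\cap D_R$ can have different numbers and types of ribbon singularities (Remark~\ref{rem:invariants}(5) concerns exactly such an example). Concretely, blowing down $G$ in $L(0,n,\pm1)$ puts a full twist on one strand of $B$ together with the $1+2k$ strands of $R$ meeting $D_G$, while blowing down $B$ in $L(0,\pm1,n)$ twists one strand of $G$ together with the $1+2k'$ strands of $R$ meeting $D_B$, and $k\neq k'$ is possible; matching framings and linking numbers, which you verify, is far from an isotopy of these links. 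What the paper actually does in Figures~\ref{fig:lem1} and~\ref{fig:lem2} is put the Hopf RBG link in a schematic form (Figure~\ref{fig:RGBabstract}) where only the clasps are explicit and all remaining, possibly asymmetric, data rides along in a tangle; the slide-plus-slam-dunk and the blow-downs consume precisely the clasp between $D_B$ and $D_G$, after which the two diagrams are related by an explicit isotopy of schematics (for $s=0$ this is the isotopy $K_B^{n,0}\simeq K_G^{n,0}$ of Remark~\ref{rem:tangles}(3)). Producing that normal form and that isotopy is the real content of the lemma, and it is exactly the step your proposal leaves open --- as you yourself flag; the symmetry heuristic will not fill it.
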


To prove Theorem~\ref{thm:main_detailed}, we first observe that there are distinct fillings of a Hopf RBG link which produce the same manifold. We use the notation $L(r,b,g)$ to denote filling $R,B,G$ with slopes $r,b,g$, respectively. We use $\ast$ to denote no filling.  

\begin{lem}\label{lem:sym_hopf}
	Let $L$ be a Hopf RBG link. Then for any $n,m\in\Z$ and $s\in\{-1,0,1\}$ we have that $L(n,m,s)$ is orientation-preserving diffeomorphic to $L(n,s,m)$. 
\end{lem}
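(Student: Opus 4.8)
The plan is to keep the filling $n$ on $R$ fixed throughout and to view both $L(n,m,s)$ and $L(n,s,m)$ as fillings that agree on $R$ and differ only in which of the two Hopf components $B,G$ carries slope $m$ and which carries slope $s$. The goal is then to build an orientation-preserving diffeomorphism realizing this interchange. The organizing observation is that the defining conditions of a Hopf RBG link are symmetric under exchanging $B$ and $G$: both $R\cup B$ and $R\cup G$ are Hopf links, and conditions 2(b) and 2(c) prescribe $D_R\cap D_B$ and $D_R\cap D_G$ by identical data (a single clasp together with ribbon singularities). So I expect the swap to be implemented by a local Kirby move that interchanges the roles of $B$ and $G$ up to a correction term, where the restriction $s\in\{-1,0,1\}$ is exactly the range in which that correction can be absorbed. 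Note at the outset that a slope-independent symmetry of the link exterior cannot exist, since it would force the statement for all $m,s$; the filling must be used essentially.

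I would first handle $s=\pm1$ by a blow-down. Each component of a Hopf RBG link is unknotted, so in $L(n,m,s)$ the component $G$ is a $\pm1$-framed unknot and may be blown down. Writing $\ell=\lk(B,G)\in\{\pm1\}$ and using $\lk(R,G)=1$, this deletes $G$, changes the framings of $B$ and $R$ by $-s\,\ell^2=-s$ and $-s$ respectively, alters $\lk(B,R)$ by $-s\ell$, and inserts a $(-s)$ full twist on the strands through $D_G$. The resulting framed two-component link is $R\cup B$ with framings $(n-s,\,m-s)$. Performing the symmetric computation on $L(n,s,m)$, where now $B$ is the $\pm1$-framed unknot, blowing down $B$ yields $R\cup G$ with framings $(n-s,\,m-s)$ and the analogous twist. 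The two results have identical framings, and the $B\leftrightarrow G$ symmetry of conditions 2(a)--(c) is precisely what should identify the two twisted configurations up to isotopy, giving the diffeomorphism.

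For $s=0$ a blow-down is unavailable, and I would instead use that $G$ is a $0$-framed unknot. Since $B\cup G$ is a Hopf link, $G$ is a meridian of $B$; a $0$-framed meridian is, after cleaning up the extra $D_R$- and $D_B$-intersections by handle slides, a cancelling partner, and the combined slam-dunk/handle-slide bookkeeping should convert the $(m,0)$-labelling on $B\cup G$ into the $(0,m)$-labelling while carrying $R$ to an isotopic curve. Here Lemma~\ref{lem:S3surgeries} is the consistency check that the ambient $3$-sphere structure on the relevant Hopf sublinks is preserved under these moves. Again the symmetry of the defining conditions in $B$ and $G$ is what makes the two sides match.

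The main obstacle, in every case, is tracking the image of $R$: one must verify that after the blow-down (or the $s=0$ slide) the twisted copy of $R$, together with its clasp and ribbon intersections with $D_B$ and $D_G$, is carried to exactly the configuration appearing on the other side. This is where conditions 2(a)--2(c) do the real work, since they guarantee that $R$ threads $D_B$ and $D_G$ in mirror-symmetric fashion. It is also the step that pins down the slope restriction: the correcting twist forced on $R$ by the move is a fixed amount independent of the slopes, and it can be undone by an isotopy across the surgery solid tori only when the available meridional twist is small, i.e.\ when $s\in\{-1,0,1\}$. I would expect the cleanest writeup to record this bookkeeping on an explicit schematic diagram of the clasped disks rather than in symbols.
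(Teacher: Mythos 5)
Your choice of Kirby moves coincides exactly with the paper's: for $s=\pm1$ a blow-down of the $\pm1$-framed component on each side (the paper's Figure~\ref{fig:lem2}; your bookkeeping $(n-s,m-s)$ with an inserted $(-s)$-full twist agrees with the paper's $n\mp1$, $m\mp1$ and $\mp1$-twist box), and for $s=0$ a handle slide followed by a slam dunk (Figure~\ref{fig:lem1}). The gap is the step you delegate to ``the $B\leftrightarrow G$ symmetry of conditions 2(a)--(c).'' That is a symmetry of the \emph{form} of the definition, not of the link: a Hopf RBG link in general admits no diffeomorphism of $S^3$ exchanging $B$ and $G$ and fixing $R$ --- if it did, $K_B^{0,n}$ and $K_G^{0,n}$ would be isotopic for every $n$ and Theorem~\ref{thm:counterex} would be false. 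For the same reason your claim that the conditions ``guarantee that $R$ threads $D_B$ and $D_G$ in mirror-symmetric fashion'' is wrong: the ribbon singularities of $D_R\cap D_B$ and of $D_R\cap D_G$ are completely unconstrained relative to one another, in number and in position (already in Figure~\ref{fig:exampleHopfRBG} they sit in different places). So after your two blow-downs you hold two framed links --- a twisted copy of the Hopf link $R\cup B$ and a twisted copy of the Hopf link $R\cup G$ --- with equal coefficients, and nothing you have said produces the required isotopy between them. Indeed, the inference pattern ``equal surgery coefficients plus formally symmetric defining conditions, hence diffeomorphic'' would just as well show that the knots $K_B^{0,n}=L(0,*,n)$ and $K_G^{0,n}=L(0,n,*)$ are isotopic, which is precisely what the paper disproves; and you yourself note at the outset that no slope-independent symmetry can exist, an observation that undercuts this very step.

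That missing identification is the entire content of the paper's proof: the horizontal $\cong$'s in Figures~\ref{fig:lem1} and~\ref{fig:lem2} are \emph{explicit} isotopies, verified on the schematic normal form of a Hopf RBG link, between the two post-move diagrams, with each diagram carrying its own (mutually unrelated) ribbon data along. For $s=0$ the paper's slide-plus-slam-dunk in fact reduces \emph{both} $L(n,m,0)$ and $L(n,0,m)$ to surgery on a single knot with coefficient $m+n\pm2$, namely $K_B^{n,0}$ and $K_G^{n,0}$, so the needed identification is exactly the isotopy $K_B^{n,0}\simeq K_G^{n,0}$ recorded in Remark~\ref{rem:tangles}(3) --- again a drawn isotopy, not a formal symmetry. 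Two smaller points: your explanation of the restriction $s\in\{-1,0,1\}$ (a correcting twist that ``can be undone by an isotopy across the surgery solid tori only when the available meridional twist is small'') is not the actual mechanism --- the restriction enters solely because these are the slopes for which the moves exist, a blow-down for $s=\pm1$ and a $0$-framed slam dunk/cancellation for $s=0$; and Lemma~\ref{lem:S3surgeries} plays no role in this lemma (it is used for Corollary~\ref{cor:knots}). In short: the moves and framing computations are right, but without constructing the two isotopies your proposal does not yet prove the statement.
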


\begin{proof}
	For each $s$, these diffeomorphisms are induced by the sequences of handle slides and slam dunks shown in Figures~\ref{fig:lem1} and~\ref{fig:lem2}.
\end{proof}

\begin{figure}[t] 
	\centering
	\def\svgwidth{0,8\columnwidth}
	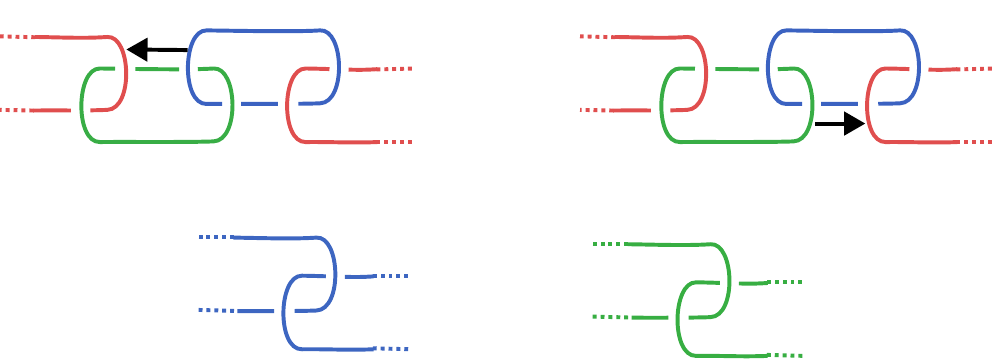
	\caption{$L(n,m,0)$ is diffeomorphic to $L(n,0,m)$. Each vertical diffeomorphism is given by a handle slide (indicated with the arrow) followed by a slam dunk.}
	\label{fig:lem1}
\end{figure}

\begin{figure}[htbp] 
	\centering
	\def\svgwidth{0,75\columnwidth}
	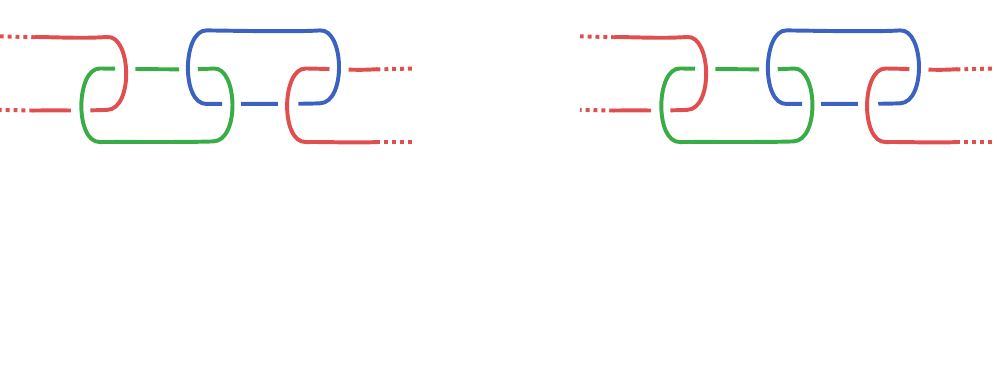
	\caption{$L(n,m,\pm1)$ is diffeomorphic to $L(n,\pm1,m)$. Each vertical diffeomorphism is given by a blow down. While the horizontal diffeomorphism is induced by an isotopy.}
	\label{fig:lem2}
\end{figure}
\begin{figure}[htbp] 
	\centering
	\def\svgwidth{0,75\columnwidth}
	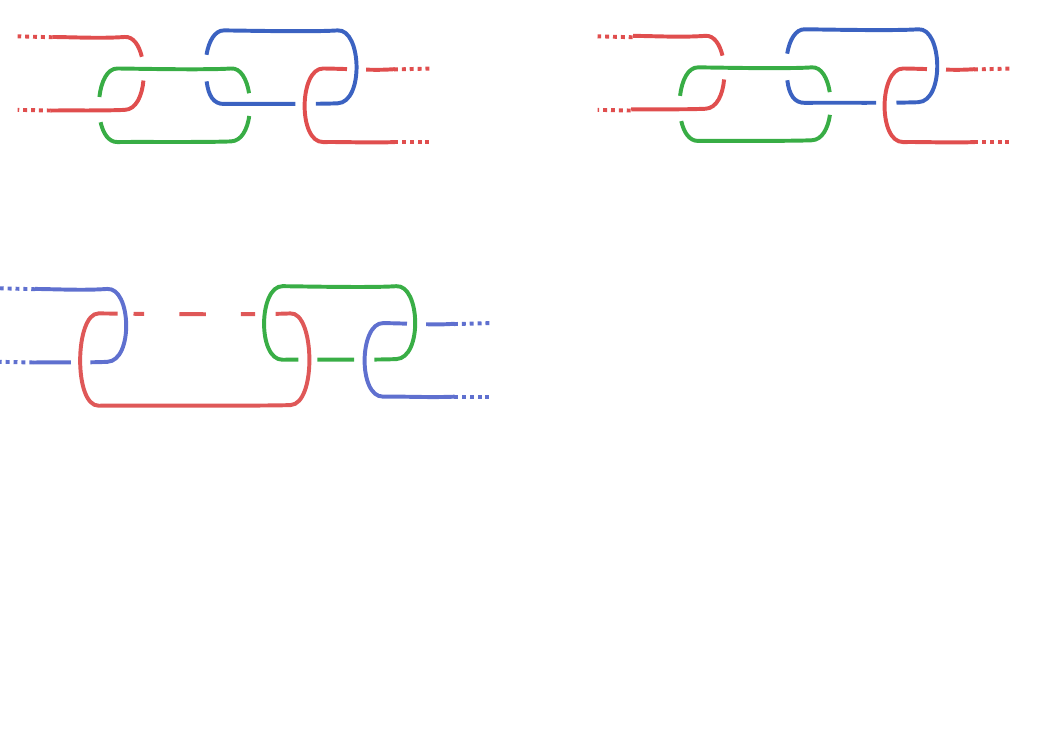
	\caption{Kirby moves from a Hopf RBG link to the knots $K_B^{0,n}$ and $K_G^{0,n}$. The sign in the framing $*+n\pm2$ depends on the exact linking numbers of the Hopf RBG link.}
	\label{fig:RGBtangles}
\end{figure}

With this lemma, we are ready to give the proof of Theorem~\ref{thm:main_detailed}.

\begin{proof} [Proof of Theorem~\ref{thm:main_detailed}]
For knots $K_G^{0,n}$ and $K_B^{0,n}$ constructed from a Hopf RBG link $L$ as in Corollary \ref{cor:knots}, the diffeomorphisms from	Lemma~\ref{lem:sym_hopf} readily show that  $K_G^{0,n}$ and $K_B^{0,n}$ share four integer surgeries as follows.
	\begin{align*}
		K_G^{0,n}(s_n)&=L(0,n,n)=K_B^{0,n}(s_n)\\
		K_G^{0,n}(s_0)&=L(0,n,0)=L(0,0,n)=K_B^{0,n}(s_0)\\
		K_G^{0,n}(s_1)&=L(0,n,+1)=L(0,+1,n)=K_B^{0,n}(s_1)\\
		K_G^{0,n}(s_{-1})&=L(0,n,-1)=L(0,-1,n)=K_B^{0,n}(s_{-1})
	\end{align*}
Here we are using $s_i$ to stand in for a not-yet-computed surgery coefficient. To compute the coefficients $(s_n,s_0,s_1,s_{-1})$ we perform the explicit Kirby moves as shown in Figure~\ref{fig:RGBtangles}. A concrete Hopf RBG link is shown in Figure~\ref{fig:exampleHopfRBG}, where $(s_n,s_0,s_1,s_{-1})=(n+1,n+2,n+3,2n+2)$. To get the other set of slopes in the statement of Theorem \ref{thm:main_detailed}, one can switch the sign on the clasp between $B$ and $G$ in the RBG link in Figure~\ref{fig:exampleHopfRBG}.
\end{proof}

\begin{rem}\label{rem:tangles} A few remarks are in order.
	\begin{enumerate}
		\item If the Hopf RBG links are overly simple or symmetric then it might happen that the knots $K_G^{0,n}$ and $K_G^{0,n}$ are isotopic. But in general, there is no reason why these knots should be isotopic, and we will prove in Section~\ref{sec:distinguish} that often they are not. 
  %$K_G^{0,n}$ and $K_G^{0,n}$ are not isotopic.
		\item On the other hand for $n=-1,0,1$, the knots $K_G^{0,n}$ and $K_B^{0,n}$ are always isotopic. Indeed, Lemma~\ref{lem:sym_hopf} implies that
		\begin{align*}
			K_G^{0,0}&=L(0,0,*)=L(0,*,0)=K_B^{0,0},\\
			K_G^{0,+1}&=L(0,+1,*)=L(0,*,+1)=K_B^{0,+1},\\
			K_G^{0,-1}&=L(0,-1,*)=L(0,*,-1)=K_B^{0,-1}.
		\end{align*}
		\item Similarly for all integers $n\in\Z$, the knots $K_G^{n,0}$ and $K_B^{n,0}$ are isotopic.
  \item It follows from Figure~\ref{fig:RGBtangles} that the knots $K_B^{0,n}$ and $K_G^{0,n}$ are related by taking the tangle shown in the lower row of Figure~\ref{fig:RGBtangles} and rotating it by $\pi$. This can be thought of as a generalized mutation. 
	\end{enumerate}
\end{rem}

In Theorem~\ref{thm:main_detailed} we established that the knots $K_G^{0,n}$ and $K_B^{0,n}$ have four common surgeries. Recall that for any $s\in\Z$ we can associate a 4-manifold to a knot $K$ by attaching a $2$-handle with framing $s$ along a knot $K$ in $\partial D^4$. This $4$-manifold is called the $s$-trace, $X_s(K)$. The boundary of $X_s(K)$ is diffeomorphic to the $s$-surgery along $K$. Since the RBG link which induced $K_G^{0,n}$ and $K_B^{0,n}$ had $R=U$ and $r=0$, our Theorem \ref{thm:main_detailed} immediately generalizes to traces by taking the RBG link $L$ to be a handle diagram of a 4-manifold where $R$ is a $1$-handle in dotted circle notation. 

\begin{cor}\label{thm:traces}
	Let $L$ be a Hopf RBG link. For every integer $n\in\Z$ the knots $K_G^{0,n}$ and $K_B^{0,n}$ have orientation-preserving diffeomorphic $s_i$-traces for the following set of surgery slopes.
	\begin{itemize}
	\item $(n+1,n+2,n+3,2n+2)$, if $\lk(B,G)=-1$, and
	\item $(n-3,n-2,n-1,2n-2)$, if $\lk(B,G)=1$.\qed
	\end{itemize}
\end{cor}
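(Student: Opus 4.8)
The plan is to upgrade the three-dimensional argument behind Theorem~\ref{thm:main_detailed} to four dimensions, using crucially that $R$ is always a $0$-framed unknot. Recall that $X_s(K)=D^4\cup(\text{$2$-handle along }K\text{ with framing }s)$ and that $\partial X_s(K)=K(s)$. First I would promote $R$ to a $1$-handle: since $R$ is an unknot with framing $0$, replacing its surgery circle by a dotted circle leaves the boundary unchanged but produces an honest $4$-manifold. For $b,g\in\Z$ set
$$W(b,g)=D^4\cup(\text{$1$-handle along }R)\cup(\text{$2$-handle along }B,\ b)\cup(\text{$2$-handle along }G,\ g).$$
Because $R\cup B$ and $R\cup G$ are Hopf links, both $B$ and $G$ run once over the $1$-handle $R$, so $W(b,g)$ admits two $1$/$2$-handle cancellations: cancelling $R$ against $B$ exhibits $W(b,g)$ as a trace of $K_G^{0,b}$ (the surviving $2$-handle runs along the image of $G$ after $(0,b)$-surgery on $R\cup B$, with framing as computed in Figure~\ref{fig:RGBtangles}), while cancelling $R$ against $G$ exhibits the same manifold as a trace of $K_B^{0,g}$.

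Fixing $b=n$, the four raw framings $g\in\{n,0,+1,-1\}$ appearing in the proof of Theorem~\ref{thm:main_detailed} produce exactly the four trace slopes $s_n,s_0,s_1,s_{-1}$, and the identifications above read $X_{s_i}(K_G^{0,n})=W(n,g_i)$ and $X_{s_i}(K_B^{0,n})=W(g_i,n)$. So the corollary reduces to the oriented diffeomorphisms
$$W(n,g)\cong W(g,n),\qquad g\in\{n,0,+1,-1\},$$
which is the four-dimensional incarnation of Lemma~\ref{lem:sym_hopf}. The case $g=n$ is immediate, since both sides are the single manifold $W(n,n)$, read in its two cancellations. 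For the remaining three slopes I would reinterpret the moves of Figures~\ref{fig:lem1} and~\ref{fig:lem2} four-dimensionally.

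For $g=\pm1$ the argument is clean: on the left $G$ is a $\pm1$-framed unknotted $2$-handle, so blowing it down writes $W(n,\pm1)\cong \widehat W_L\,\#\,(\pm\mathbb{CP}^2)$ (with $\mathbb{CP}^2$ when $g=+1$ and $\overline{\mathbb{CP}^2}$ when $g=-1$), where $\widehat W_L$ is the blown-down diagram of Figure~\ref{fig:lem2}; symmetrically $W(\pm1,n)\cong \widehat W_R\,\#\,(\pm\mathbb{CP}^2)$ with the \emph{same} summand. The horizontal isotopy of Figure~\ref{fig:lem2} identifies $\widehat W_L\cong \widehat W_R$, and deleting the common summand yields $W(n,\pm1)\cong W(\pm1,n)$; here the blow-down merely clasps the dotted circle $R$ with $B$ (resp.\ $G$), and its effect on the ``framing'' of $R$ is vacuous for a $1$-handle. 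For $g=0$ the handle slides of Figure~\ref{fig:lem1} lift verbatim to four-dimensional handle slides, while the slam dunk, which in three dimensions absorbs the $0$-framed $R$, becomes in the $1$-handle picture a slide over the dotted circle $R$ followed by a $1$/$2$-handle cancellation.

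The hard part will be precisely this last bookkeeping: checking step by step that every move of Figures~\ref{fig:lem1} and~\ref{fig:lem2} promotes to a genuine four-dimensional operation (handle slide, cancellation, isotopy, or blow-up/down) rather than a purely three-dimensional simplification such as a rational slam dunk, and that the blow-up summands introduced on the two sides agree in sign and cancel. Since all of these moves preserve orientation, the resulting diffeomorphisms are orientation-preserving, as claimed; the case $\lk(B,G)=1$ follows identically after reversing the sign of the $B$--$G$ clasp, exactly as in the proof of Theorem~\ref{thm:main_detailed}.
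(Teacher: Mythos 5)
Your setup coincides with the paper's intended one: dot the $0$-framed $R$, read the single $4$-manifold $W(b,g)$ as a trace in two ways via the two $1$/$2$-handle cancellations, and reduce the corollary to $W(n,g)\cong W(g,n)$ for $g\in\{n,0,+1,-1\}$; the case $g=n$ is indeed immediate, and your $g=0$ paragraph can be completed (the vertical moves of Figure~\ref{fig:lem1} become honest slides and a $1$/$2$-cancellation, and the bottom horizontal arrow is an isotopy of framed knots in $S^3$, so the two resulting traces agree). The genuine gap is the $g=\pm1$ case. Blowing down $G$ is \emph{not} a legal four-dimensional move there, precisely because the dotted circle $R$ passes through the disk bounded by $G$: a $\pm1$-framed $2$-handle splits off a $\pm\mathbb{CP}^2$-summand only when its attaching circle bounds a disk in the boundary of the rest of the manifold, and $G$, running once over the $1$-handle $R$, bounds no such disk (indeed, after cancelling $R$ against $B$, the curve $G$ becomes the generally knotted $K_G^{0,n}$). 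Your parenthetical ``its effect on the framing of $R$ is vacuous for a $1$-handle'' is exactly where the error hides. In fact $W(n,\pm1)\cong\widehat{W}_L\#(\pm\mathbb{CP}^2)$ is false: $W(n,\pm1)$ is simply connected, since $B$ and $G$ each run over $R$ algebraically once, whereas in the blown-down diagram the $2$-handle runs over the dotted circle algebraically $1\mp\lk(B,G)\in\{0,2\}$ times, so $\pi_1\bigl(\widehat{W}_L\#(\pm\mathbb{CP}^2)\bigr)$ is $\Z$ or $\Z/2$. The minimal counterexample to the move itself: a dotted unknot with a $(+1)$-framed meridian is a cancelling pair, hence $D^4$, but blowing the meridian down leaves $S^1\times D^3$, and $D^4\not\cong(S^1\times D^3)\#\mathbb{CP}^2$.

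The repair needs no blow-downs and is what makes the paper's one-sentence proof work: for $g\in\{-1,0,+1\}$, apply the \emph{opposite} cancellation on each side. Cancelling $R$ against $G$ exhibits $W(n,g)$ as a trace of $K_B^{0,g}$, and cancelling $R$ against $B$ exhibits $W(g,n)$ as a trace of $K_G^{0,g}$; in both readings the induced framing is the same, since each is computed by a single slide of the $n$-framed handle over the $g$-framed one, exactly as in Figure~\ref{fig:RGBtangles}. Now invoke Remark~\ref{rem:tangles}(2), i.e.\ Lemma~\ref{lem:sym_hopf} used purely three-dimensionally: for $g\in\{-1,0,1\}$ the knots $K_B^{0,g}$ and $K_G^{0,g}$ are \emph{isotopic} in $S^3$, and isotopic knots with equal framings have orientation-preserving diffeomorphic traces. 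Hence $W(n,g)\cong W(g,n)$ for these $g$, which together with $W(n,n)$ read both ways gives all four common traces, with the slopes $s_i$ as computed in Theorem~\ref{thm:main_detailed}.
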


\section{Distinguishing knots that share four surgeries}\label{sec:distinguish}
As a concrete example, we consider the Hopf RBG link $L$ shown in Figure~\ref{fig:exampleHopfRBG}. Note that the intersection of the Seifert disk $D_R$ of $R$ intersects the blue and red Seifert disks in two ribbon singularities and two clasp singularities. On the left side, both ribbon intersections are with $D_B$, and on the right, both are with $D_G$. By following the Kirby moves from Figure~\ref{fig:RGBtangles} we obtain the knots $K_B^{0,n}=K_n$ and $K_G^{0,n}=K'_n$ shown in Figure~\ref{fig:example}.

\begin{figure}[t] 
	\centering
	\def\svgwidth{0,75\columnwidth}
	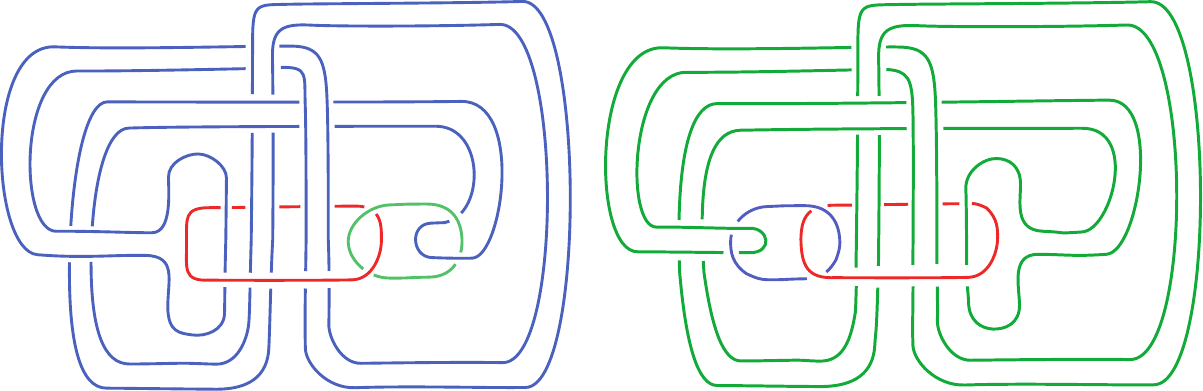
	\caption{Two views on the same Hopf RBG link.}
	\label{fig:exampleHopfRBG}
\end{figure}

In Appendix~\ref{app} we will prove a technical theorem which implies the following.

\begin{thm}\label{thm:distinguish}
 The Hopf RGB link $L$ in Figure~\ref{fig:exampleHopfRBG} induces the knots $K_B^{0,n}$ and $K_G^{0,n}$ in Figure~\ref{fig:example} which, for $n\neq-1,0,+1$, have distinct HOMFLYPT polynomials.
\end{thm}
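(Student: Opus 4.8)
The plan is to treat the HOMFLYPT polynomial as a linear functional on the HOMFLYPT skein module of the twist region, exploiting the fact recorded in Remark~\ref{rem:tangles}(4) that $K_B^{0,n}$ and $K_G^{0,n}$ arise from a single tangle by inserting $n$ full twists on a fixed collection of $k$ parallel strands and then closing up in two ways related by a $\pi$-rotation. First I would fix a diagram for the part of each knot outside the twist box and regard everything as living in the HOMFLYPT skein module $\mathcal{S}_k$ of the $k$-strand tangle, a free module of finite rank over $\mathbb{Z}[v^{\pm 1},z^{\pm 1}]$. Inserting $n$ full twists is the $n$-th power $\Delta^n$ of the full-twist element $\Delta\in\mathcal{S}_k$ acting on this module, and each HOMFLYPT polynomial is obtained by pairing $\Delta^n$ applied to a fixed initial vector against a closure covector determined by the complementary tangle.

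The second step is to diagonalize $\Delta$. Over a suitable extension of the coefficient ring, $\Delta$ acts semisimply with finitely many eigenvalues $\lambda_1,\dots,\lambda_r$ that are explicit Laurent expressions in $v$ and $z$ (for $k=2$ there are just two, the roots of the quadratic relation satisfied by a positive crossing in the HOMFLYPT skein algebra), and crucially these eigenvalues are pairwise distinct as functions of the formal variables. Consequently each knot's HOMFLYPT polynomial takes the closed form
\[
P(K_C^{0,n}) = \sum_{i=1}^{r} c_{C,i}\,\lambda_i^{\,n}, \qquad C\in\{B,G\},
\]
where the coefficients $c_{C,i}\in\mathbb{Z}[v^{\pm 1},z^{\pm 1}]$ are independent of $n$ and determined by the two closures.

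Since the functions $n\mapsto\lambda_i^{\,n}$ are linearly independent (the $\lambda_i$ being distinct), one has $P(K_B^{0,n})=P(K_G^{0,n})$ for infinitely many $n$ if and only if $c_{B,i}=c_{G,i}$ for every $i$. It therefore suffices to produce a single index $i_0$ with $c_{B,i_0}\neq c_{G,i_0}$; the cleanest candidate is the eigenvalue of largest $v$-degree, whose coefficient can be read off from the leading asymptotics of $P$ as $n\to\infty$ without computing the full polynomial. The $\pi$-rotation relating the two closures preserves the eigenspace decomposition of $\Delta$ but replaces the closure covector by its rotation, so I would compute only the component of these two covectors along the distinguished eigenvector and show the resulting coefficients differ for the deliberately asymmetric link of Figure~\ref{fig:exampleHopfRBG}.

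The main obstacle is exactly this last comparison. Generalized mutation preserves a great deal---consistent with the knots sharing knot Floer and Khovanov homology, hence Alexander and Jones polynomials---so the difference $P(K_B^{0,n})-P(K_G^{0,n})$ must vanish under both the Alexander specialization and the Jones specialization, and one must verify it is nonetheless nonzero as a genuinely two-variable polynomial. This forces the argument to engage with the asymmetry of the chosen example rather than any formal feature: an overly symmetric Hopf RBG link would make all $c_{B,i}=c_{G,i}$ and the knots isotopic, exactly as in Remark~\ref{rem:tangles}(1). Finally I would confirm by direct evaluation at small $n$ that the vanishing locus of the difference is precisely $\{-1,0,+1\}$, matching the isotopies of Remark~\ref{rem:tangles}(2).
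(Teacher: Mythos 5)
Your transfer-matrix setup (pairing powers of the full-twist element in a finite-rank HOMFLYPT skein module against two closure covectors) is a legitimate framework and genuinely different from the paper's route, which works with the $z$-degree-$0$ part $F_L=H_L(v,0)$, exploits its simpler skein relation, and runs explicit crossing-change inductions (Lemma~\ref{lem:5twist}, solved by the recursion Lemma~\ref{lem:ana1} with computer-computed base cases) to extract leading terms. But as written your argument has a genuine gap: the one step that carries the entire content of Theorem~\ref{thm:distinguish} --- showing that the closure coefficients $c_{B,i_0}$ and $c_{G,i_0}$ actually differ for the link of Figure~\ref{fig:exampleHopfRBG} --- is only announced (``I would compute\ldots and show the resulting coefficients differ''), never performed. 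Everything preceding that step is generic structure valid for \emph{any} pair of knots obtained from a twist family by a generalized mutation, including the symmetric examples of Remark~\ref{rem:tangles}(1) for which the conclusion is false; so nothing in your proposal engages the specific asymmetry that makes the theorem true. This comparison is exactly where the paper spends its entire appendix, ultimately producing the leading coefficients $(2n-2)$ versus $(n-2)$ of $v^{6-2n}$ in Theorem~\ref{thm:main_homfly}, and there is no indication it can be obtained ``without computing the full polynomial'' as you hope.

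Two further points would need repair even if you carried out that computation. First, your structural claims are unverified and partly miscalibrated: the twist region here consists of \emph{five} strands with mixed orientations (three up, two down, Figure~\ref{fig:5strands}), not the two same-oriented strands governed by the Hecke quadratic relation you cite, and semisimplicity plus pairwise distinctness of the full-twist eigenvalues on this mixed skein module is a nontrivial assertion. Indeed, the paper's own answer warns you: the $(n-n_0)Zv^{-4n}$ term in Lemma~\ref{lem:ana1} and the $n$-linear coefficients $(2n-2)v^{6-2n}$ show that at $z=0$ the twist operator has colliding eigenvalues (a Jordan block), so the diagonalization must at minimum be replaced by generalized eigenspaces with coefficients polynomial in $n$, and the passage to the $z=0$ limit is delicate (poles in the $c_i$ can cancel against eigenvalue collisions). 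Second, distinctness of a single pair of eigen-coefficients only gives $P(K_B^{0,n})\neq P(K_G^{0,n})$ for all but finitely many $n$, whereas the theorem claims it for every $n\notin\{-1,0,1\}$; you acknowledge checking ``small $n$'' directly, but without an effective bound extracted from the asymptotics you do not know how many values to check. The paper resolves this by giving closed formulas valid for all $n\geq 4$ and $n\leq -3$ and listing the remaining cases in Table~\ref{tab:tab_homfly_values}; your argument needs an analogous effective statement to close.
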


Together with Theorem \ref{thm:main_detailed}, this finishes the proof of Theorem~\ref{thm:counterex}.

\begin{rem} \label{rem:invariants} We make a few remarks about other ways to distinguish or not distinguish knots that share four surgeries.
	\begin{enumerate}
		\item In all examples of Hopf RBG links that we have inspected, the knots $K_B^{0,n}$ and $K_G^{0,n}$ share many knot invariants. For example, they have isomorphic knot Floer and Khovanov homologies and thus also the same Alexander and Jones polynomial and the same $3$-genus. Moreover, verified computations with SnapPy suggest that such knots always have the same hyperbolic invariants (volume, cusp shape, symmetry group, systole, and short slopes). This is probably explained by the fact that the knots coming from our construction are always related by a generalized mutation, as explained in Remark~\ref{rem:tangles}. 
        \item Knots coming from our construction have the same double branched cover; see Theorem~\ref{thm:covers} below.
		\item We can also distinguish $K_B^{0,n}$ and $K_G^{0,n}$ using the fundamental groups of their complements by looking at the number of conjugacy classes of index $k$ subgroups. Indeed, for the Hopf RBG link $L$ from Figure~\ref{fig:exampleHopfRBG} we have used SnapPy to compute (for small values of $n$) the number of covers of $K_B^{0,n}$ and $K_G^{0,n}$ to be different. For example it turns out that $K_B^{0,2}$ has $32$ covers of degree $8$ while $K_G^{0,2}$ has $33$ covers of degree $8$. 
		\item Another successful and computationally fast way to distinguish such knots is by using canonical triangulations. Here we have used the verified methods in SnapPy~\cite{SnapPy} to construct the canonical triangulations of $K_B^{0,n}$ and $K_G^{0,n}$ for small values of $n$ and show that they are not combinatorial equivalent, cf.~\cite{Weeks_canonical}. With the results and methods from~\cite{canonical_surgery} it should also be possible to distinguish infinitely many $K_B^{0,n}$ and $K_G^{0,n}$ using their canonical triangulations.
		\item There exist also examples of Hopf RBG links where $D_R\cap (D_B\cup D_G)$ consists of a single ribbon and two clasp singularities such that $K_B^{0,n}$ and $K_G^{0,n}$ are non-isotopic, see~\cite{data} for a concrete example. However, then the inductive formula for the HOMFLYPT polynomial yields that for all $n$, the knots $K_B^{0,n}$ and $K_G^{0,n}$ have the same HOMFLYPT polynomial. On the other hand, the canonical triangulations can still distinguish $K_B^{0,n}$ and $K_G^{0,n}$ (for $n\neq-1,0,1$). 
	\end{enumerate}
\end{rem}

We end this section by showing that all knots coming from our construction share the same double branched cover and that the cyclic $k$-fold branched covers have the same volume. We denote by $\Sigma_k(K)$ the cyclic $k$-fold branched cover of $K$. 

\begin{thm}\label{thm:covers} 
    Let $L$ be a Hopf RBG link with associated knots $K_B^{0,n}$ and $K_G^{0,n}$. Then for every $k\geq2$ there exists a $2$-component link $J_k$ consisting of two unknots such that for all $n\in\Z$ 
    \begin{itemize}
        \item $\Sigma_k(K_B^{0,n})$ is orientation-preserving diffeomorphic to $J_k(0,kn)$, and
        \item $\Sigma_k(K_G^{0,n})$ is orientation-preserving diffeomorphic to $J_k(kn,0)$.
    \end{itemize}
    Moreover, there exists a sequence of mutations on $J_k$ interchanging the components of $J_k$, and thus the volumes of $\Sigma_k(K_B^{0,n})$ and $\Sigma_k(K_G^{0,n})$ agree for all $k$ and $n$.
    For $k=2$ there exists an isotopy of $J_2$ interchanging the components and thus for all $n\in\Z$ the double branched covers of $K_B^{0,n}$ and $K_G^{0,n}$ are orientation-preserving diffeomorphic.
    \end{thm}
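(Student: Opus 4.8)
The plan is to compute each branched cover by lifting a surgery description, using crucially that every component of a Hopf RBG link is an unknot. First I would set up $J_k$. By Corollary~\ref{cor:knots} the exterior of $K_B^{0,n}$ is $L(0,\ast,n)$, the complement of $B$ in the $S^3$ obtained by $0$- and $n$-filling $R$ and $G$. Since $R\cup B$ is a Hopf link, $B$ is unknotted in the ambient $S^3$, so the $k$-fold cyclic branched cover of $S^3$ over $B$ is again $S^3$. Thus $\Sigma_k(K_B^{0,n})$ is obtained from this $S^3$ by lifting the two surgery solid tori of $R$ and $G$. As $\lk(R,B)=1$ and $|\lk(B,G)|=1$ are coprime to $k$, each of $R$ and $G$ has connected preimage, and the lifts form a two-component link $\widetilde R\cup\widetilde G$, which (after fixing an identification $\Sigma_k(S^3,B)\cong S^3$) I call $J_k$. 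Because $R\cup B$ and $G\cup B$ are Hopf links, each of $R$ and $G$ is, individually, isotopic in $S^3\setminus\nu(B)$ to a core of the solid torus $S^3\setminus\nu(B)$, and a core lifts to a core of the covering solid torus, hence to an unknot in $\Sigma_k(S^3,B)=S^3$; so both components of $J_k$ are unknots.

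Next I would track framings through the cover. The meridian of each surgery curve has linking number $0$ with $B$ and so lifts to a meridian of the corresponding lift, whereas the framed longitude has linking number $\pm1$ with $B$ and lifts to a single curve wrapping $k$ times around; writing the lifted $f$-framed longitude in a basis for the covering boundary torus then shows that an $f$-framing downstairs becomes a $kf$-framing upstairs. Hence the $0$-framing on $R$ lifts to $0$ and the $n$-framing on $G$ lifts to $kn$, giving $\Sigma_k(K_B^{0,n})=J_k(0,kn)$. Running the identical argument with the roles of $B$ and $G$ exchanged expresses $\Sigma_k(K_G^{0,n})$ as a $(0,kn)$-filling of the link $\widetilde R\cup\widetilde B$.

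To finish, I would identify this second link with $J_k$ with its two components interchanged. By Remark~\ref{rem:tangles}(4), $K_B^{0,n}$ and $K_G^{0,n}$ differ by the $\pi$-rotation of the tangle in Figure~\ref{fig:RGBtangles}, a generalized mutation exchanging the roles of $B$ and $G$; this tangle involution lifts to the branched covers and realizes the exchange of the two components of $J_k$ as a sequence of mutations carrying the framing data $(0,kn)$ to $(kn,0)$. Since mutation preserves hyperbolic volume (Ruberman's theorem), $\mathrm{vol}\,\Sigma_k(K_B^{0,n})=\mathrm{vol}\,\Sigma_k(K_G^{0,n})$ for all $k$ and $n$, so $\Sigma_k(K_G^{0,n})=J_k(kn,0)$ up to mutation. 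For $k=2$ the mutating sphere lifts to a sphere along which the regluing involution extends over both sides, so the component exchange is realized by an honest isotopy of $J_2$; then $J_2(0,2n)$ and $J_2(2n,0)$ are orientation-preservingly diffeomorphic, proving $\Sigma_2(K_B^{0,n})\cong\Sigma_2(K_G^{0,n})$.

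I expect the \emph{main obstacle} to be making the component-exchanging symmetry precise: one must produce the explicit sequence of mutations, and for $k=2$ the explicit isotopy of $J_2$, relating the two links $\widetilde R\cup\widetilde G$ and $\widetilde R\cup\widetilde B$ that a priori arise over different branch knots, while simultaneously matching the framings. The framing bookkeeping in the cover is the other point requiring care, although it is routine.
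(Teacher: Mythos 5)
Your outline matches the paper's strategy (realize both covers as surgeries on two-component links of unknots obtained by lifting the presentation $L(0,\ast,n)$ through a branched cover of an unknotted component, then identify the two links), and your arguments that the lifted components are unknots and that Seifert framings transform by $f\mapsto kf$ are fine as isolated statements. But the proposal has a fatal error at its first step, and a genuine gap at the step you yourself flag as the main obstacle.

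The first step is wrong: $\Sigma_k(K_B^{0,n})$ is \emph{not} obtained from the branched cover of the original $(S^3,B)$ by ``lifting the two surgery solid tori,'' precisely because $\lk(R,B)=1$ and $\lk(B,G)=\pm1$ are nonzero. The cover of the link exterior $E=S^3\setminus\nu(R\cup B\cup G)$ that sits inside $\Sigma_k(K_B^{0,n})$ is the restriction of the $\Z_k$-cover of the exterior of $K_B^{0,n}$ dual to its meridian; the two fillings impose the relations $\mu_B+\mu_G=0$ and $\mu_R+\lk(B,G)\mu_B+n\mu_G=0$ in first homology, so this homomorphism sends $\mu_G\mapsto\pm1\neq 0$ and $\mu_R\mapsto n-\lk(B,G)$, whereas the cover induced by branching over $B$ in the original $S^3$ sends both $\mu_R$ and $\mu_G$ to $0$. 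These are different covers of $E$, so your link lives in the wrong manifold. Equivalently: since (as you compute) the framed longitudes lift to curves wrapping $k$ times, gluing solid tori along those lifts extends the projection only as a map branched over the cores of the surgery tori, so your surgered manifold is a cyclic cover of $S^3$ branched over $K_B^{0,n}\cup R^{*}\cup G^{*}$, not over $K_B^{0,n}$. Concretely, take $R,B,G$ to be three fibers of the Hopf fibration (a degenerate but legitimate Hopf RBG link, with all Seifert-disk conditions satisfied by single clasps). Then $K_B^{0,n}$ is an unknot, so $\Sigma_k(K_B^{0,n})\cong S^3$; but your $\widetilde R\cup\widetilde G$ is a pair of generic fibers of the $(k,1)$-Seifert fibration of $S^3$, with linking number $k$, so your $J_k(0,kn)$ has first homology of order $k^2$. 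The paper's $J_k$ is not this naive preimage: it is defined as the output of the explicit diagrammatic construction of Figure~\ref{fig:dbc}, and this discrepancy is exactly why the proof there proceeds by concrete Kirby-diagram manipulations rather than by the general lifting principle you invoke.

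Separately, even granting a correct $J_k$, your treatment of the second bullet proves less than the theorem asserts. You identify the link arising from the $G$-cover with $J_k$ (components swapped) only ``up to mutation,'' by lifting the generalized mutation of Remark~\ref{rem:tangles}; mutation is not a diffeomorphism of pairs, so this does not yield $\Sigma_k(K_G^{0,n})\cong J_k(kn,0)$. The paper instead shows the two lifted links are honestly isotopic for every $k$, using the fact that $R$, and hence its lift, bounds a disk meeting the diagram only in ribbon singularities; that isotopy, not mutation, is what makes both bullets statements about a single $J_k$. The mutations in the theorem are an additional, separate assertion: explicit Conway-sphere mutations carrying $J_k$ to itself while exchanging its components (the dashed circles in Figure~\ref{fig:dbc}), which is what feeds into Ruberman's theorem for the volume claim. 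Your substitute does not produce such mutations: the sphere of a generalized mutation meets the branch knot in more than two points, so its preimage upstairs is a surface of genus growing with $k$, and volume-invariance of regluing along such surfaces is not covered by Ruberman without further argument. Finally, for $k=2$ your claim that ``the regluing involution extends over both sides'' is unsupported; the paper exhibits a concrete isotopy of $J_2$ exchanging the components by moving a clasp through the two tangles.
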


\begin{figure}[t] 
	\centering
	\def\svgwidth{0,85\columnwidth}
	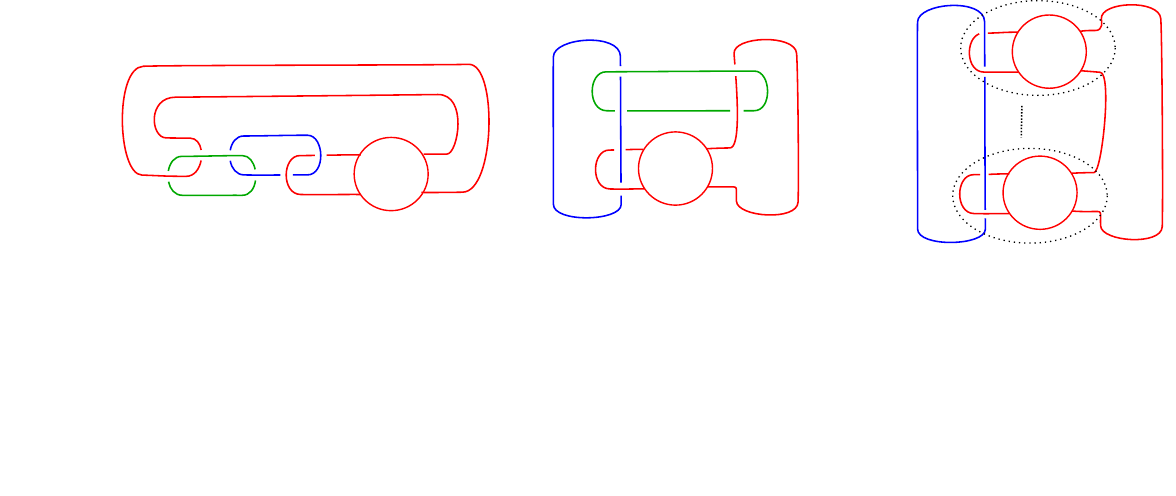
	\caption{Creating surgery diagrams of the cyclic $k$-fold covers of $K_B^{0,n}$ (bottom row) and of $K_B^{0,n}$ (top row). Here the disk labeled $T$ denotes a tangle such that the red knot $R$ is an unknot.}
	\label{fig:dbc}
\end{figure}

\begin{proof}
    The Hopf RBG link $L(0,*,n)$ is a surgery presentation of the complement of $K_B^{0,n}$ in which $K_B^{0,n}$ is represented by the blue unknot $B$. Thus we can create a surgery diagram of the cyclic $k$-fold branched cover $\Sigma_k(K_B^{0,n})$ of $K_B^{0,n}$ from that surgery presentation by taking the $k$-fold cyclic cover of $L(0,*,n)$ branched along $B$ as shown in the bottom frames of Figure~\ref{fig:dbc}. We do the same for $\Sigma_k(K_G^{0,n})$ in the top frames of Figure \ref{fig:dbc}. 

    Define $J_k$ to be the $2$-component link in the top right frame of Figure~\ref{fig:dbc}. Note that downstairs $R$ is an unknot bounding a disk $D_R$ in the diagram with only ribbon singularities, and one can check the lift of $R$ to $\Sigma_k(K_G^{0,n})$ has the same property. As such we can isotope $J_k$ to the link in the bottom right frame of Figure~\ref{fig:dbc}. The itemized claims now follow. 

    The sequence of mutations which exchange the components of $J_k$ are marked in black dashed circles in Figure~\ref{fig:dbc}. Finally, we observe that there is an isotopy of $J_2$ that interchanges the components given by moving one clasp of $J_2$ through both tangles $T$. This shows that the double branched covers are orientation-preserving diffeomorphic. 
\end{proof}

\begin{rem}
    On the other hand, it follows for example from~\cite{covers1,covers2,covers3} that for any two non-isotopic knots there exists a $k$ such that their $k$-fold cyclic covers are not orientation-preserving diffeomorphic. In particular, this holds for the knots from Figure~\ref{fig:example} which we will distinguish in Appendix~\ref{app}. On the other hand, Theorem~\ref{thm:covers} says that it might be difficult to use the cyclic covers as invariants to distinguish knots in our setting.
\end{rem}

\section{Legendrian knots that share contact \texorpdfstring{$(\pm1)$}{(+-1)}-surgeries}\label{sec:legendrian}
Here we will prove Theorem~\ref{thm:counterex_Legendrian} by a variation of the proof of Theorem~\ref{thm:counterex} in the context of contact manifolds. In this section, we will measure surgery coefficients with respect to the contact longitude. For background on contact geometry and contact surgery, we refer for example  to~\cite{Gompf_Stein,Ding_Geiges_Surgery,Ding_Geiges_Stipsicz,Ozbagci_Stipsicz_book,Geiges_book,Durst_Kegel_rot_surgery,phdthesis,Legendrian_knot_complement,Casals_Etnyre_Kegel,EKS_contact_surgery_numbers}.

\begin{defi}
	A $3$-component Legendrian link $L$ in $(S^3,\xist)$ with components $R$, $B$, and $G$, is called \textit{Legendrian Hopf RBG link} if $L$ is a smooth Hopf RBG link such that $R$, $B$, and $G$ are Legendrian unknots with $\tb=-1$.
\end{defi}

A concrete example is shown in Figure~\ref{fig:LegendrianHopfRBGexample}. Similar as in Section~\ref{sec:construction} any Legendrian Hopf RBG link yields Legendrian knots $K_B$ and $K_G$ in $(S^3,\xist)$ that have contactomorphic contact $(\pm1)$-surgeries.

\begin{lem}\label{lem:pairs_Legendrian}
	If $L$ is a Legendrian Hopf RBG link. Then 
	\begin{align*}
		K_G=&\,L(+1,-1,*) \textrm{ and }
		K_B=\,L(+1,*,-1)
	\end{align*}
	are Legendrian knots in $(S^3,\xist)$.
\end{lem}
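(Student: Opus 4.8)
The plan is to reduce the statement to the standard cancellation lemma for contact $(\pm1)$-surgeries. Recall (Ding--Geiges~\cite{Ding_Geiges_Surgery}; see also~\cite{Ozbagci_Stipsicz_book}) that if $\Lambda$ is a Legendrian knot in a contact $3$-manifold $(M,\xi)$ and $\mu$ is a \emph{Legendrian meridian} of $\Lambda$ --- that is, a Legendrian unknot with $\tb(\mu)=-1$ bounding a disk meeting $\Lambda$ transversely in a single point --- then performing contact $(+1)$-surgery on $\Lambda$ together with contact $(-1)$-surgery on $\mu$ yields a contact manifold contactomorphic to $(M,\xi)$, via a contactomorphism supported in a neighborhood of $\Lambda\cup\mu$. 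The whole content of the lemma is to recognize the fillings of $R$ and of the relevant meridian as such a cancelling pair.

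First I would record the smooth shadow as a consistency check. Since each component of $L$ is a Legendrian unknot with $\tb=-1$, the contact framing is the $(-1)$-framing, so contact $(+1)$-surgery on $R$ is smooth $0$-surgery and contact $(-1)$-surgery on $B$ (or $G$) is smooth $(-2)$-surgery. As $R\cup B$ is a Hopf link, Lemma~\ref{lem:S3surgeries} applied with $(p/q,r/s)=(0/1,-2/1)$ gives $pr-qs=-1$, so the underlying smooth manifold is indeed $S^3$; the role of the contact argument is to upgrade this to an identification of the contact structure with $\xist$.

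Next I would verify the hypotheses of the cancellation lemma. Because $L$ is a Legendrian Hopf RBG link, $R\cup B$ is a Legendrian Hopf link both of whose components are $\tb=-1$ unknots, so $B$ is a Legendrian meridian of $R$ in the sense above. Applying the cancellation lemma with $\Lambda=R$ (carrying the contact $(+1)$-filling) and $\mu=B$ (carrying the contact $(-1)$-filling) shows that $L(+1,-1,*)$ is contactomorphic to $(S^3,\xist)$. The component $G$ lies in the complement of the surgery tori, hence persists through both surgeries as a Legendrian knot (the ambient contact structure is unchanged away from the surgery region); being carried by a contactomorphism into $(S^3,\xist)$, its image is Legendrian, and this image is $K_G$. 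The identical argument with the roles of $B$ and $G$ exchanged --- now $G$ is the $\tb=-1$ Legendrian meridian of $R$ with the contact $(-1)$-filling, and $B$ is the surviving component --- shows that $L(+1,*,-1)$ is contactomorphic to $(S^3,\xist)$ and that $K_B$ is a Legendrian knot in it.

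I expect the only genuine subtlety to be pinning down the contact structure as the standard $\xist$ rather than some other, possibly overtwisted, structure on $S^3$: the smooth computation alone does not control this. This is exactly what the cancellation lemma supplies, since it undoes the two surgeries at the contact level and not merely at the level of the underlying manifold. A secondary point to confirm is that the $\tb=-1$ hypothesis is precisely what makes each pair $R\cup B$ and $R\cup G$ a genuine Legendrian meridian configuration with the framings needed for cancellation; without it the contact framings would not line up and the pair would fail to cancel.
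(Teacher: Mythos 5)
Your strategy coincides with the paper's: cancel the contact $(+1)$-surgery on $R$ against the contact $(-1)$-surgery on the other component of the relevant Hopf sublink, and note that the surviving component is carried by the resulting contactomorphism to a Legendrian knot in $(S^3,\xist)$. But the cancellation lemma you invoke is false as stated, and this is a genuine gap. You assert that for an \emph{arbitrary} Legendrian knot $\Lambda$ with a $\tb=-1$ Legendrian meridian $\mu$, contact $(+1)$-surgery on $\Lambda$ together with contact $(-1)$-surgery on $\mu$ returns $(M,\xi)$. The smooth shadow already refutes this: the smooth coefficients are $\tb(\Lambda)+1$ on $\Lambda$ and $-2$ on $\mu$, so with $\lk(\Lambda,\mu)=1$ the linking matrix
\begin{equation*}
\begin{pmatrix} \tb(\Lambda)+1 & 1 \\ 1 & -2 \end{pmatrix}
\end{equation*}
has determinant $-(2\tb(\Lambda)+3)$, whence the surgered manifold has $|H_1|=|2\tb(\Lambda)+3|$. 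This equals $1$ only for $\tb(\Lambda)\in\{-1,-2\}$; for a $\tb=1$ Legendrian right-handed trefoil the result has $H_1\cong\Z/5$, so no contactomorphism to $(M,\xi)$ can exist. The statement is also not the Ding--Geiges cancellation lemma, which pairs contact $(+1)$- and $(-1)$-surgeries on a Legendrian knot and its Legendrian \emph{push-off}, not its meridian; your statement conflates the two.

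The correct meridian-type cancellation, which is what the paper uses, carries the signs the other way: contact $(-1)$-surgery on a Legendrian knot $K$ together with contact $(+1)$-surgery on a $\tb=-1$ Legendrian meridian of $K$ (smooth coefficients $\tb(K)-1$ and $0$, so determinant $-1$ for every $K$) gives back the original contact manifold. Your application is rescued only by the symmetry of the configuration: $R\cup G$ (resp.\ $R\cup B$) is a Hopf link of $\tb=-1$ Legendrian unknots, so each component is a $\tb=-1$ meridian of the other. Reading $L(+1,*,-1)$ as ``$(-1)$ on the knot $G$ and $(+1)$ on its meridian $R$'' --- rather than your reading ``$(+1)$ on the knot $R$ and $(-1)$ on its meridian $G$'' --- the correct lemma applies verbatim, and one obtains exactly the paper's proof. (Alternatively, you could keep your reading and note that for a $\tb=-1$ unknot $R$, a $\tb=-1$ Legendrian meridian is Legendrian isotopic, as a link together with $R$, to a Legendrian push-off of $R$, so the genuine Ding--Geiges lemma applies; this identification is special to $\tb=-1$ unknots and is exactly what fails for general $\Lambda$.) So your proof is repairable by a one-line role swap, but as written it rests on a false general statement.
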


\begin{proof}
	The cancellation lemma implies that contact $(-1)$-surgery on a Legendrian knot $K$ followed by contact $(+1)$-surgery on a Legendrian meridian of $K$ with $\tb=-1$ is contactomorphic to $(S^3,\xist)$. In particular, it follows that contact $(+1)$-surgery on $R$ and contact $(-1)$-surgery on $G$ is contactomorphic to $(S^3,\xist)$. Thus the image of $B$ under this contactomorphism is a Legendrian knot $K_B$ in $(S^3,\xist)$. The other knot arises by the same construction with the roles of $B$ and $G$ reversed.
\end{proof}

\begin{lem}\label{lem:sym_hopf_Legendrian}
	Let $L$ be a Legendrian Hopf RBG link. Then $L(+1,+1,-1)$ is contactomorphic to $L(+1,-1,+1)$. 
\end{lem}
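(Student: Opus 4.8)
The plan is to prove this contact symmetry as a refinement of its smooth shadow. Translating contact coefficients into Seifert framings (contact $(+1)$- and $(-1)$-surgery on a Legendrian unknot with $\tb=-1$ are smoothly $0$- and $(-2)$-surgery), the two fillings $L(+1,+1,-1)$ and $L(+1,-1,+1)$ have smooth shadows $L(0,0,-2)$ and $L(0,-2,0)$, which are orientation-preserving diffeomorphic by Lemma~\ref{lem:sym_hopf} (the case $s=0$, realized by a handle slide and a slam dunk as in Figure~\ref{fig:lem1}). So a diffeomorphism already exists; the entire content of the lemma is to upgrade it to a contactomorphism, and for this I would replace the smooth Kirby moves by contact-geometric ones.

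The main tool is the contact cancellation lemma used in the proof of Lemma~\ref{lem:pairs_Legendrian}. In each of $L(+1,+1,-1)$ and $L(+1,-1,+1)$ the Hopf sublink $B\cup G$ carries the cancelling pair of contact coefficients $\{+1,-1\}$. Since $B$ and $G$ are Legendrian unknots with $\tb=-1$ forming a Hopf link, each is a $\tb=-1$ Legendrian meridian of the other, so the cancellation lemma applies: performing contact $(+1)$- and contact $(-1)$-surgery on $B\cup G$ yields $(S^3,\xist)$, and the leftover contact $(+1)$-surgery on $R$ becomes contact $(+1)$-surgery on the image $\rho$ of $R$ under the resulting contactomorphism (contact framings are preserved, so the coefficient stays $+1$). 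Thus $L(+1,+1,-1)$ is contact $(+1)$-surgery on an image $\rho_{+-}$ of $R$ (cancelling the $+1$ on $B$ against the $-1$ on $G$), while $L(+1,-1,+1)$ is contact $(+1)$-surgery on an image $\rho_{-+}$ of $R$ (cancelling the $-1$ on $B$ against the $+1$ on $G$). I emphasize that this reduction is to images of $R$, \emph{not} to $K_B$ or $K_G$, so it does not presuppose the $(+1)$-surgery case of Theorem~\ref{thm:counterex_Legendrian}.

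It then remains to show that $\rho_{+-}$ and $\rho_{-+}$ are Legendrian isotopic in $(S^3,\xist)$; granting this, contact $(+1)$-surgery on Legendrian-isotopic knots produces contactomorphic manifolds and the lemma follows. The two images differ only in the order in which the cancelling Hopf pair $B\cup G$ is collapsed, a modification supported in a neighborhood of $B\cup G$ together with the strands of $R$ puncturing $D_B$ and $D_G$; this exchange is the contact incarnation of the horizontal isotopy in the smooth argument of Figures~\ref{fig:lem1} and~\ref{fig:lem2}, and I would produce it from the standard contact model of a $\tb=-1$ Hopf link, which admits a contactomorphism interchanging its two components. I expect this last step to be the main obstacle: smoothly $\rho_{+-}$ and $\rho_{-+}$ are isotopic by Lemma~\ref{lem:sym_hopf}, but upgrading to a \emph{Legendrian} isotopy requires controlling the contact framing and any stabilizations that the $\tb=-1$ meridian introduces as it cancels the $(-1)$-surgered component, and checking that the clasp-exchange can be carried out through Legendrian rather than merely smooth isotopy. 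The uniform $\tb=-1$ hypothesis on $R$, $B$, $G$ is exactly what makes the cancellations clean and the local Hopf model contact-symmetric, so the verification should reduce to a finite check in a standard contact neighborhood, best recorded by an explicit diagram of contact Kirby moves parallel to Figures~\ref{fig:lem1} and~\ref{fig:lem2}.
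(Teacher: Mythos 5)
Your first reduction is sound: in $L(+1,+1,-1)$ the components $G,B$ carry contact coefficients $-1,+1$, and $B$ is a $\tb=-1$ Legendrian meridian of $G$, so the cancellation lemma (exactly as invoked in Lemma~\ref{lem:pairs_Legendrian}) collapses $B\cup G$ and exhibits both manifolds as contact $(+1)$-surgeries on Legendrian knots $\rho_{+-},\rho_{-+}\subset(S^3,\xist)$, the two images of $R$. The gap is the step you yourself flag as the main obstacle: you never prove that $\rho_{+-}$ and $\rho_{-+}$ are Legendrian isotopic, and that claim is both strictly stronger than the lemma and very likely false in general. If they were Legendrian isotopic, then $L(r,+1,-1)$ and $L(r,-1,+1)$ would be contactomorphic for \emph{every} contact coefficient $r$, so the underlying smooth knots would share infinitely many surgeries with matching coefficients, which by the folk theorem quoted in the introduction forces them to be smoothly isotopic. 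But $\rho_{+-}$ and $\rho_{-+}$ are exactly a ``swapped-filling'' pair of the same kind as $K_B$ and $K_G$ --- images of one component of an RBG-type link under the two ways of filling the other two --- and the central point of this paper is that such pairs are in general not isotopic, not even smoothly (cf.\ the proof of Theorem~\ref{thm:counterex_Legendrian}, where the analogous images of $B$ and $G$ are distinguished by their HOMFLYPT polynomials). Note also that the two images do not differ merely ``in the order'' of collapsing: in one cancellation $G$ is the surgered knot and $B$ the meridian, in the other the roles are reversed, so these are genuinely different operations. Your proposed repair --- a contactomorphism of the standard $\tb=-1$ Hopf link interchanging its components --- does not help, because that symmetry moves $R$, and nothing in the definition of a Legendrian Hopf RBG link supplies a symmetry exchanging $B$ and $G$ while fixing $R$.

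There is also a structural reason your route cannot be completed with available tools: your cancellation lands in the \emph{tight} $S^3$, where Legendrian knots are not determined by their classical invariants, so even a successful computation of $\tb$ and $\rot$ of $\rho_{+-}$ and $\rho_{-+}$ would prove nothing. The paper's proof makes the opposite pairing precisely to escape this rigidity: it cancels $R$ (coefficient $+1$) against whichever of $B,G$ carries the coefficient $+1$; by \cite{Durst_Kegel_rot_surgery} this pair of contact $(+1)$-surgeries produces the \emph{overtwisted} $(S^3,\xi_1)$ with $\de_3=1$, and the remaining component (the one with coefficient $-1$) becomes a \emph{loose} Legendrian knot there. Loose knots in an overtwisted structure are classified by their classical invariants \cite{Etnyre_OT}, so it suffices to compute $\tb$ and $\rot$ via smooth Kirby calculus and the surgery-diagram formulas of \cite{phdthesis} to conclude that the two remaining knots are isotopic, hence that their contact $(-1)$-surgeries are contactomorphic (Figure~\ref{fig:LegendrianLemma}). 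In short: the comparison must be carried out in the flexible overtwisted world, whereas your argument strands it in the rigid tight one.
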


\begin{proof}
	We consider the contactomorphism shown in Figure~\ref{fig:LegendrianLemma}. The right and left contactomorphism are given by the indicated contact handle slides and Legendrian Reidemeister moves. That the two surgery diagrams in the middle are contactomorphic can be seen as follows. First, we compute that contact $(+1)$-surgery on a Legendrian knot $K$ followed by contact $(+1)$-surgery on a Legendrian meridian of $K$ with $\tb=-1$ yields the overtwisted contact structure $\xi_1$ on $S^3$ with $\de_3=1$~\cite{Durst_Kegel_rot_surgery}. Then we use smooth Kirby calculus and the formulas for computing $\tb$ and $\rot$ from surgery diagrams~\cite{phdthesis} to show that the green knot in the second surgery diagram and the blue knot in the third surgery diagram are loose Legendrian knots in $(S^3,\xi_1)$ with the same classical invariants. Thus these two Legendrian knots are isotopic~\cite{Etnyre_OT} and then also the surgered manifolds are contactomorphic.
\end{proof}

\begin{figure}[t] 
	\centering
	\def\svgwidth{0,75\columnwidth}
	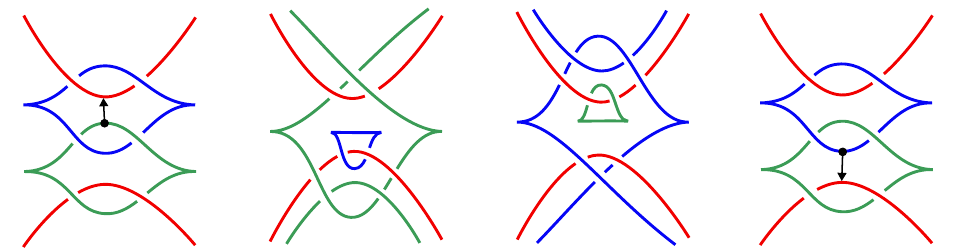
	\caption{$L(+1,+1,-1)$ is contactomorphic to $L(+1,-1,+1)$.}
	\label{fig:LegendrianLemma}
\end{figure}

\begin{proof} [Proof of Theorem~\ref{thm:counterex_Legendrian}]
	Let $L$ be the Legendrian Hopf RBG link shown in Figure~\ref{fig:LegendrianHopfRBGexample}. Then Lemma~\ref{lem:pairs_Legendrian} and~\ref{lem:sym_hopf_Legendrian} imply that
	\begin{align*}
		K_G(-1)&=L(+1,-1,-1)=K_B(-1),\,\text{ and }\\
		K_G(+1)&=L(+1,+1,-1)=L(+1,-1,+1)=K_B(+1).
	\end{align*}
Explicit contact handle slides yield front projections of the knots $K_B$ and $K_G$ shown in Figure~\ref{fig:LegendrianHopfRBGexample}. And by computing the HOMFLYPT polynomials~\cite{data} we see that these two Legendrian knots are not even smoothly isotopic.
\end{proof}

\begin{figure}[t] 
	\centering
	\def\svgwidth{0,75\columnwidth}
	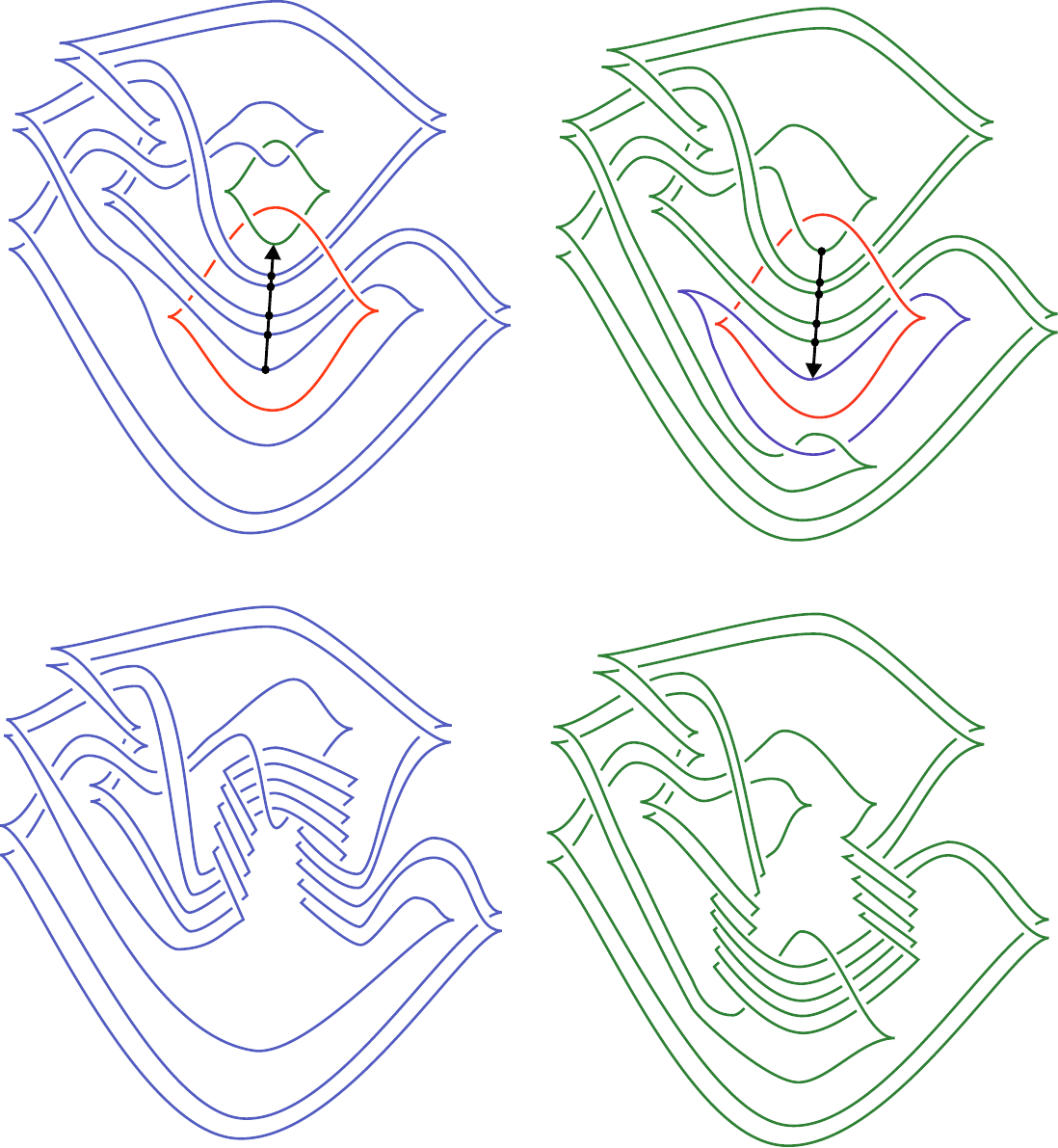
	\caption{The top row shows two views on the same Legendrian Hopf RBG link. The bottom row shows front projections of the Legendrian knots $K_B$ and $K_G$ that have contactomorphic contact $(\pm1)$-surgeries.}
	\label{fig:LegendrianHopfRBGexample}
\end{figure}

\section{Some open questions}\label{sec:questions}

We end this article with some open questions. In Remark~\ref{rem:invariants} we observed that many invariants agree on our knots that share four surgeries. We don't know how general this is. 

\begin{ques}
	Let $K$ and $K'$ be two knots that share more than one surgery. Do $K$ and $K'$ have the same knot Floer and Khovanov homology? Do they have the same hyperbolic invariants, such as volume or cusp shape?
\end{ques}

Recall that in Remark~\ref{rem:tangles}(5) we showed that our pairs of knots with four common surgeries are related by a generalized mutation operation. This might explain why our knots share many knot invariants; it might not be inherent to having many common surgeries. The effect of a different generalization of mutation on various invariants was studied in~\cite{Mutations1,Mutations2}, but our examples are not covered by these results.

We also conjecture that in fact there is no bound on the number of slopes for which some pair of knots can have common surgeries.  

\begin{con}
	For any natural number $N$ there exist pairwise-different integers $n_1,\ldots n_N$ and non-isotopic knots $K$ and $K'$ such that for $i=1,\ldots N$, $K(n_i)$ is orientation-preserving diffeomorphic to $K'(n_i)$.
\end{con}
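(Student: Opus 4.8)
The plan is to promote the Hopf RBG construction of Section~\ref{sec:construction} to a multi-component setup in which an entire permutation group, rather than a single transposition, acts through surgery coincidences. Recall that in Theorem~\ref{thm:main_detailed} the four common surgeries of $K_G^{0,n}$ and $K_B^{0,n}$ arise because the blue and green filling slots can be interchanged whenever one of them carries a slope in $\{-1,0,1\}$, together with the trivial coincidence when both carry $n$; this is precisely the content of Lemma~\ref{lem:sym_hopf}. To produce $N$ common surgeries I would search for a link $L$ with a red component $R=U$ and $N$ further unknotted components $C_1,\dots,C_N$, pairwise Hopf-linked in a suitable pattern, carrying an ambient symmetry (or a fixed sequence of handle slides, blow-downs, and slam dunks) that cyclically permutes the $C_i$-fillings whenever the incoming slope lies in an admissible set. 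Deleting $C_1$ and $C_N$ and filling the remaining components as in Corollary~\ref{cor:knots} would then yield two knots whose $N$ surgery slopes are matched, one for each position to which a distinguished slope can be transported.

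The technical heart is a generalization of Lemma~\ref{lem:sym_hopf}: for each admissible special slope $s$ one needs a diffeomorphism $L(\dots,m,s,\dots)\cong L(\dots,s,m,\dots)$ transposing adjacent colored slots, realized by the very Kirby-calculus moves that drive Figures~\ref{fig:lem1} and~\ref{fig:lem2} (slam dunk for $s=0$, blow-down for $s=\pm1$). Composing such transpositions generates the desired permutation symmetry, and a bookkeeping of framings analogous to Figure~\ref{fig:RGBtangles} computes the resulting $N$ surgery coefficients as explicit integers in $\Z$; one then tunes the linking and twisting data so that these $N$ integers are pairwise distinct. Finally, non-isotopy of the two distinguished knots would be verified by the invariants of Section~\ref{sec:distinguish}, most robustly the HOMFLYPT computation behind Theorem~\ref{thm:distinguish}, or by canonical triangulations for infinitely many members of the family.

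The main obstacle is that the admissible set $\{-1,0,1\}$ is not an artifact of the specific link but records exactly which integer slopes on an unknotted component can be absorbed by a single Kirby move; since there is no evident fourth special slope for one component, pushing past four surgeries seems to demand genuinely more components or a subtler absorption mechanism that I do not yet see how to engineer uniformly in $N$. The difficulty is moreover structural: as the introduction already indicates, each additional shared slope constrains the two knot complements more tightly, so the symmetry forcing the coincidences must grow increasingly rigid while the two knots are required to remain non-isotopic even as they are forced to agree on more and more surgeries and, as noted in Remark~\ref{rem:invariants}, on ever more auxiliary invariants. Balancing enough symmetry to share $N$ surgeries against enough asymmetry to stay distinct, and achieving this for every $N$, is exactly what makes the conjecture hard and keeps it open.
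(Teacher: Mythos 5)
The statement you were asked to prove is not a theorem of the paper at all: it is an open conjecture, stated in Section~\ref{sec:questions} (``Some open questions'') and explicitly left unresolved by the authors, who only prove the case of four shared slopes (Theorem~\ref{thm:counterex} via Theorem~\ref{thm:main_detailed}). There is therefore no proof in the paper to compare your attempt against, and your closing assessment --- that the conjecture is genuinely open --- is exactly right. Your write-up is honest that it is a research plan rather than a proof, and the plan you sketch (promoting the Hopf RBG construction of Corollary~\ref{cor:knots} and Lemma~\ref{lem:sym_hopf} to a multi-component link with a permutation symmetry) is the natural first thing to try and is plausibly what the authors have in mind in posing the conjecture.

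Since your proposal does contain a genuine gap (as you acknowledge), it is worth making it sharper than you do. In the paper's construction the two knots are the images of the two distinguished components $B$ and $G$, and every shared surgery comes from a diffeomorphism $L(0,m,s)\cong L(0,s,m)$ in which only the $B$- and $G$-slots vary: one coincidence from the diagonal $m=s=n$, and three from absorbing a special slope $s\in\{-1,0,1\}$ by the slam dunks and blow-downs of Figures~\ref{fig:lem1} and~\ref{fig:lem2}. Adding further components $C_1,\dots,C_N$ does not by itself create new coincidences for a \emph{fixed} pair of knots, because the extra slots must be filled with fixed slopes once and for all in order to define the two knots; only two slots remain active, and the known absorption moves on an unknotted component (slam dunk, blow-down, Rolfsen twist) still handle only the integer slopes $0$ and $\pm1$. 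So a permutation symmetry of $N$ components does not translate into $N$ shared slopes for one pair of knots: one would need either a component admitting more than three absorbable integer slopes, which this Kirby-calculus mechanism does not provide, or a structurally different source of coincidences. That missing mechanism is precisely why the conjecture is open, so your proposal should be read as a reasonable strategy outline that correctly identifies, but does not overcome, the central obstruction.
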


One can also ask more specific versions of these questions, such as whether, for any finite set of rationals, one can find pairs of distinct knots which share those surgeries, or whether one can find infinite families of distinct knots with several common surgeries.  

\appendix

\section{HOMFLYPT polynomials of twist families}\label{app}

In this appendix, we will prove Theorem~\ref{thm:distinguish} by showing that for all integers $n\in \Z\setminus\{-1,0,1\}$, the HOMFLYPT polynomials of the knots $K_B^{0,n}$ and $K_G^{0,n}$ from Figure~\ref{fig:example} are different.

\subsection{The degree 0 part of the HOMFLYPT polynomial}

The HOMFLYPT polynomial $H_L(v,z)$ of an oriented link $L$ is defined via $H_{\textrm{unknot}}(v,z) = 1$ and the following skein relation
\begin{align*}
	v^{-1}H_{L+} (v,z) - vH_{L_-} (v,z) = zH_{L_0}(v,z).
\end{align*}
Here $L_+$, $L_-$, and $L_0$ are links that agree outside a $3$-ball $B$ such that $L_\pm$ admits a single positive/negative crossing inside $B$ and this crossing is resolved (following the orientation) in $L_0$.
It is known that the HOMFLYPT polynomial can be written as 
	\begin{equation*}
		H_L(v,z)=(v^{-1}z)^{1-\#L} \sum_{i\in\N_0} H^i_L(v)z^{2i},
	\end{equation*}
	where $\#L$ denotes the number of components of the link $L$ and $H^i_L(v)$ is a Laurent polynomial in $v^2,v^{-2}$. We write $F_L(v)$ for $H^0_L(v)$; this $F_L$ is characterized by $F_{\text{unknot}}(v)=1$ and the following simple skein relation:
	\begin{equation*}
		v^{-2}F_{L_+}(v)-F_{L_-}(v)=\begin{cases}
			F_{L_0}(v)\,\,&\text{ if } \delta=0,\\
			0\,\,&\text{ if } \delta = 1,
		\end{cases}
	\end{equation*}
	where $\delta=0$ if the two strands at the resolved crossing belong to the same component and $\delta=1$ otherwise. In particular, this implies that $F_L(v)$ of a link $L$ with components $K_1\cup\cdots\cup K_n$ only depends on the linking numbers $\lk(K_i,K_j)$ and the polynomials $F_{K_i}(v)$ of the components. More precisely, we have for an oriented link $L=K_1\cup\cdots\cup K_n$ with components $K_i$ the following linking number formula
	\begin{align*}
    F_L(v)=(v^{-2}-1)^{n-1} v^{2\sum_{i<j} \lk(K_i,K_j)} F_{K_1}(v)\cdots F_{K_n}(v).
	\end{align*}
	We refer to~\cite{Ito_HOMFLY0} for more details on $F_L$. 
	
	\subsection{The knots \texorpdfstring{$K_n$}{Kn} and \texorpdfstring{$K'_n$}{K'n}}
	
	In the rest of this appendix, we denote by $K_n=K_B^{0,n}$ and $K'_n=K_G^{0,n}$ the knots shown in Figure~\ref{fig:example}. The goal of this section is to prove the following theorem, which implies Theorem~\ref{thm:distinguish}.
	
	\begin{thm}\label{thm:main_homfly}
		The degree $0$ parts of the HOMFLYPT polynomials fulfill
	\begin{align*}
		F_{K_n}(v)&=\begin{cases}
			1+v^{-2n-4}+\text{ higher order terms}\,\,& \text{ if } n\leq-3\\
			1+(2n-2)v^{6-2n}+\text{ lower order terms}\,\,& \text{ if } n\geq 4
		\end{cases}\\
		F_{K'_n}(v)&=\begin{cases}
			1+(n+2)v^{-2n-4}+\text{ higher order terms}\,\,& \text{ if } n\leq-3\\
			1+(n-2)v^{6-2n}+\text{ lower order terms}\,\,& \text{ if } n\geq 4.
		\end{cases}
	\end{align*} 
For the other values of $n$, we have the results for $F$ as shown in Table~\ref{tab:tab_homfly_values}.
In particular, the HOMFLYPT polynomials of $K_n$ and $K'_n$ differ for all $n\neq0,\pm1$.
	\end{thm}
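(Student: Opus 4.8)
The plan is to compute $F_{K_n}$ and $F_{K'_n}$ by setting up skein relations that exploit the fact that $K_n$ and $K'_n$ differ from simpler knots only by the box of $n$ full twists on two strands. First I would fix the two strands entering the twist region and apply the $\delta=0$ skein relation for $F$ repeatedly to relate the $n$-twist knot to the $(n\mp 1)$-twist knot, together with the link obtained by resolving the crossing. Since a full twist on two strands is two crossings, changing $n$ by one means changing two crossings; I would organize this into a linear recursion. Because $F$ of a link depends only on linking numbers and the $F$-polynomials of the components (by the linking number formula quoted above), resolving a crossing in the twist region either produces another knot in the same family (with $n$ decremented) or splits off a controlled link whose $F$ is computed explicitly from that formula. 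This should yield a first-order linear recurrence in $n$ with coefficients that are monomials in $v^{\pm 2}$, whose solution is a geometric-type expression in $v^{-2}$.

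Next I would solve the recurrence separately in the regimes $n\leq -3$ and $n\geq 4$, matching against explicit base cases (small $|n|$) that can be computed directly and are recorded in Table~\ref{tab:tab_homfly_values}. The claimed leading behavior — namely that $F_{K_n}$ starts $1+v^{-2n-4}+\cdots$ while $F_{K'_n}$ starts $1+(n+2)v^{-2n-4}+\cdots$ for $n\leq -3$, and the analogous statements for $n\geq 4$ — should drop out once the closed form of the recurrence is known, by reading off the extreme-degree term. The crucial point for distinguishing the knots is that the coefficient of the extreme-degree monomial differs: for $K_n$ it is a constant ($1$, respectively $2n-2$) while for $K'_n$ it grows linearly in $n$ ($n+2$, respectively $n-2$). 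These coefficients agree only when the relevant linear expressions coincide, i.e.\ for the small exceptional values $n\in\{-1,0,1\}$, which matches Remark~\ref{rem:tangles}(2).

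The main obstacle will be bookkeeping: correctly identifying, at each resolution, whether a crossing change keeps us in the knot family or produces a split/linked configuration, and tracking the $v$-power shifts (the $v^{2\sum \lk}$ factor) that accumulate. The linking-number data depends on the precise Hopf RBG link of Figure~\ref{fig:example}, so I would need to read off the relevant linking numbers and the $F$-polynomials of the auxiliary link components from the diagram, and these differ between $K_n$ (the $B$ version) and $K'_n$ (the $G$ version) in exactly the way that produces the linear-versus-constant coefficient discrepancy. A secondary technical point is ensuring the recursion's validity uniformly for all $n$ in each range, so that the base cases genuinely pin down the solution; verifying this and assembling the extreme-degree coefficients into the stated forms is where most of the care is required, though each individual step is a routine skein computation.
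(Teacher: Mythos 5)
Your high-level outline (skein-theoretic induction on the twist parameter, linking-number formula for resolved links, base cases, then read off extreme-degree terms) is the same general strategy the paper uses, but the proposal rests on a misreading of the knots that breaks the argument at its core. The twist box in Figure~\ref{fig:example} is not a two-strand twist region: as set up in the appendix (see Figure~\ref{fig:5strands}), $K_n$ and $K_{n-1}$ differ by a full twist on \emph{five} parallel strands, three oriented one way and two the other. Consequently, passing from $n$ to $n-1$ requires changing ten crossings, not one or two, and the paper's Lemma~\ref{lem:5twist} expresses $F_{K_n}$ in terms of $F_{K_{n-1}}$ together with ten auxiliary $2$-component links $J_1,\dots,J_{10}$. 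Your assertion that each resolution ``splits off a controlled link whose $F$ is computed explicitly from [the linking-number] formula'' fails here: the components of the $J_i$ still pass through twist regions on $4$, $3$, or $2$ strands whose $F$-polynomials must themselves be computed by separate inductions (Lemmas~\ref{lem:2twist_one_up_one_down}--\ref{lem:4twist} and Proposition~\ref{prop:Jipolys}), and their mutual linking numbers depend on $n$, which is what injects the powers $v^{-2n}$ and $v^{-4n}$ into the recursion.

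This last point exposes a second, self-contained inconsistency in your proposal. You predict ``a first-order linear recurrence \ldots\ whose solution is a geometric-type expression in $v^{-2}$,'' but a geometric-type solution has bounded coefficients and can never produce the linearly growing leading coefficients $2n-2$, $n+2$, $n-2$ that the theorem asserts (your description of the $K_n$ coefficients as ``constant'' is also off: $2n-2$ grows linearly as well). The actual recursion has the form $F_{K_n}=X+Yv^{-2n}+Zv^{-4n}+v^{-4}F_{K_{n-1}}$, and the inhomogeneous term $Zv^{-4n}$ resonates with the homogeneous rate $v^{-4}$; solving it (Lemma~\ref{lem:ana1}) produces the term $(n-n_0)Zv^{-4n}$, and it is exactly this resonance that creates the linear-in-$n$ coefficients distinguishing $K_n$ from $K'_n$. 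Without the five-strand structure, the hierarchy of auxiliary inductions, and the resonant solution of the recursion, the computation cannot reach the stated leading terms, so the proposal as written does not prove the theorem.
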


For the proof of Theorem~\ref{thm:main_homfly} we use the following strategy. $K_n$ and $K_{n-1}$ only differ by a single full twist of $5$ parallel strands. And the same holds true for $K'_n$ and $K'_{n-1}$. Using the skein relation of $F_L$ we can change crossings in the diagram of $K_{n}$ to inductively reduce the computation of $F_{K_n}$ to the computation of $F_{K_{n-1}}$ and the polynomials of knots that differ only by twists of $4$, $3$ or $2$ strands which can again be computed inductively. In the following, we make this more precise.
	
\subsection{A 5-twist inductive formula}

\begin{figure}[t] 
	\centering
	\def\svgwidth{0,5\columnwidth}
	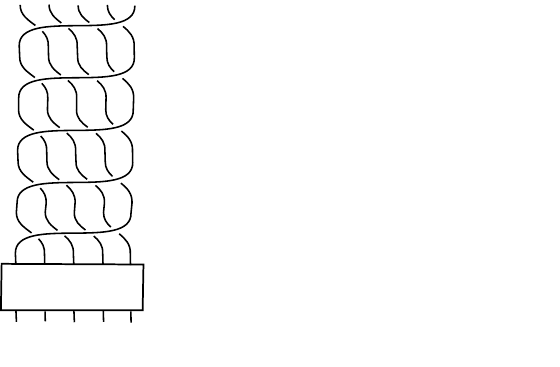
	\caption{Left: The knot $K_n$ with ten crossings $c_i$ marked. Right: Changing the ten marked crossings yields the knot $K_{n-1}$.}
	\label{fig:5strands}
\end{figure}

We start with a general formula that relates $F$ of knots that differ by twisting $5$ strands, where $3$ of these strands are oriented in one direction and the other $2$ strands are oriented in the other direction.

For all $n\in\N_0$, note that the $K_n$ are outside of a $3$-ball $B$ identical and inside $B$ have $5$ strands performing $n$-full twists, oriented as shown in Figure~\ref{fig:5strands}. For $i=1,\ldots, 10$, let $J_i$ be the links that are outside of $B$ the same as $K_n$ but are inside $B$ as shown in Figure~\ref{fig:Ji}.
\begin{lem}\label{lem:5twist}
For $n\geq1$ and $i\in\{1,\ldots, 10\}$ the $F$ polynomials of $K_n, J_i$, and $K_{n-1}$ are related by
\begin{align*}
	F_{K_n}=\,&v^2\big(F_{J_1} - F_{J_2} +F_{J_3} -F_{J_4} \big) + \big(-F_{J_5} +F_{J_6} -F_{J_7} +F_{J_8} -F_{J_9} \big) - v^{-2} F_{J_{10}} \\&+ v^{-4} F_{K_{n-1}}.
\end{align*}	
\end{lem}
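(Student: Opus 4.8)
The plan is to prove the identity by applying the skein relation for $F=H^0$ to the ten crossings $c_1,\dots,c_{10}$ of Figure~\ref{fig:5strands} one at a time, telescoping the results. First I would set $L^{(0)}=K_n$ and let $L^{(i)}$ be the diagram obtained from $K_n$ by switching exactly the crossings $c_1,\dots,c_i$. Since switching all ten marked crossings removes one full twist (this is the content of Figure~\ref{fig:5strands}), we have $L^{(10)}=K_{n-1}$. Crucially, each $L^{(i)}$ is obtained from the \emph{knot} $K_n$ by crossing changes alone, so it is again a knot; hence at every crossing $c_i$ the two strands being smoothed lie in the same component and the parameter $\delta$ in the skein relation equals $0$. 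Thus the oriented resolution always contributes, and writing $s_i=+1$ if $c_i$ is positive and $s_i=-1$ if it is negative, the skein relation at $c_i$ (applied in $L^{(i-1)}$, where $c_i$ still carries its original sign because switching the earlier crossings does not change it) reads
\[
F_{L^{(i-1)}}=v^{2s_i}F_{L^{(i)}}+d_iF_{J_i},\qquad d_i=\begin{cases}v^2 & s_i=+1,\\ -1 & s_i=-1,\end{cases}
\]
where $J_i$ is the oriented resolution of $c_i$ in $L^{(i-1)}$.

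Next I would telescope these ten relations, obtaining
\[
F_{K_n}=\Big(\textstyle\prod_{i=1}^{10}v^{2s_i}\Big)F_{K_{n-1}}+\sum_{i=1}^{10}\Big(\textstyle\prod_{j<i}v^{2s_j}\Big)d_i\,F_{J_i}.
\]
Reading the orientations off Figure~\ref{fig:5strands} fixes the sign sequence $(s_1,\dots,s_{10})=(+,-,+,-,-,+,-,+,-,-)$, with four positive crossings ($c_1,c_3,c_6,c_8$) and six negative ones. The coefficient of $F_{K_{n-1}}$ is then $v^{2(4-6)}=v^{-4}$, and substituting the sign sequence into the general coefficient $\big(\prod_{j<i}v^{2s_j}\big)d_i$ for each $i$ yields exactly $v^2,-v^2,v^2,-v^2,-1,1,-1,1,-1,-v^{-2}$, reproducing the displayed formula. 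This half of the argument is pure bookkeeping with powers of $v$ and signs.

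The step I expect to be the main obstacle is the geometric identification of the resolution terms: I must verify that the oriented resolution of $c_i$ in the partially-switched diagram $L^{(i-1)}$ is precisely the link $J_i$ of Figure~\ref{fig:Ji}, \emph{independently} of how the earlier crossings $c_1,\dots,c_{i-1}$ were switched. The mechanism I expect is that smoothing $c_i$ in the staircase of Figure~\ref{fig:5strands} introduces a cap (turnback) that renders each of the already-modified crossings $c_1,\dots,c_{i-1}$ removable by Reidemeister~I/II moves, so their states are irrelevant and $J_i$ is well defined. Confirming this frame by frame against Figure~\ref{fig:Ji} is the real work; the remaining coefficient computation follows automatically. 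Finally I would note that the hypothesis $n\geq 1$ is exactly what guarantees that the twist box contains a full positive twist, so that the ten marked crossings are present and the reduction to $K_{n-1}$ (with $n-1\geq 0$ twists) is legitimate.
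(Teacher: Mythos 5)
Your proposal is correct and is essentially the paper's own argument: the paper likewise defines the partially switched knots $K_n^i$ (with $K_n^0=K_n$, $K_n^{10}=K_{n-1}$), identifies the oriented resolutions at the crossings $c_i$ with the two-component links $J_i$ of Figure~\ref{fig:Ji}, and telescopes the skein relation for $F$ through the ten crossings via the skein tree of Figure~\ref{fig:skeintree}, whose edge labels encode exactly your sign sequence $(+,-,+,-,-,+,-,+,-,-)$ and hence your coefficients $v^2,-v^2,v^2,-v^2,-1,+1,-1,+1,-1,-v^{-2}$ and $v^{-4}$. Like you, the paper defers the diagrammatic identification of the resolution terms to an inspection of the figures (its prose describes $J_i$ with the crossings $c_j$, $j>i$, switched rather than $j<i$, which yields the same link up to the turnback isotopies you describe), so your acknowledged ``frame-by-frame'' verification is no more of a gap than what appears in the published proof.
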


\begin{proof}
		For the crossings $c_1,\ldots, c_{10}$ in $K_n$ as marked in Figure~\ref{fig:5strands} we define for $i=1,\ldots, 10$ the knot $K_n^i$ to be the knot obtained from $K_n$ by changing all crossings $c_j$ for $j\leq i$. We observe that $K_n^{10}=K_{n-1}$ and we set $K_n^0=K_n$.
		Moreover, the links $J_i$ are $2$-component links obtained from $K_n$ by changing all crossings $c_j$ for $j>i$ and resolving the crossing $c_i$.
		By appealing to the Skein tree in Figure \ref{fig:skeintree} 
        we get the claimed formula.
\end{proof}

\begin{rem}\hfill
\begin{enumerate}
    \item The lemma also holds for $K_n', J_i'$, defined analogously. In fact, it holds for any sequence of knots $K_n$ that differ by twisting $5$-strands as shown in Figure~\ref{fig:5strands}.
    \item 	We get a similar formula for negative twists by using the mirror formula for $F_L$, i.e.\ $F_{-L}(v)=F_L(v^{-1})$, where $-L$ denotes the mirror of $L$.
\end{enumerate}
\end{rem}

        \begin{figure}[t]
            \centering 
            \resizebox{\textwidth}{!}{\begin{tikzpicture}
    \begin{scope}
        \node (Kn) at (0,0) {$K_n^0$};
            \node (K1) at (2,0) {$K_n^1$};
            \node (K2) at (4,0) {$K_n^2$};
            \node (K3) at (6,0) {$K_n^3$};
            \node (K4) at (8,0) {$K_n^4$};
            \node (K5) at (10,0) {$K_n^5$};
            \node (K6) at (12,0) {$K_n^6$};
            \node (K7) at (14,0) {$K_n^7$};
            \node (K8) at (16,0) {$K_n^8$};
            \node (K9) at (18,0) {$K_n^9$};
            \node (K10) at (20,0) {$K_n^{10}$};
        \node (J1) at (0,-2) {$J_1$};
        \node (J2) at (2,-2) {$J_2$};
        \node (J3) at (4,-2) {$J_3$};
        \node (J4) at (6,-2) {$J_4$};
        \node (J5) at (8,-2) {$J_5$};
        \node (J6) at (10,-2) {$J_6$};
        \node (J7) at (12,-2) {$J_7$};
        \node (J8) at (14,-2) {$J_8$};
        \node (J9) at (16,-2) {$J_9$};
        \node (J10) at (18,-2) {$J_{10}$};

            % \node (R) at (2,-2) {$[C_{0,2} \circ D_2]$};
            %     \node (Rl) at (-2,-3) {$[D_2]$};
            %         \node (RlL) at (-4,-5) {$[D_1]$};
            %             \node (RlLL) at (-6,-7) {$U$};
            %             \node (RlLR) at (-2,-7) {$[C_{0,2}]$}; 
            %                 \node (RlLRl) at (-4,-8) {$[H]$};
            %                 \node (RlLRr) at (0,-8) {$U$};
            %         \node (RlR) at (0,-5) {$[C_{0,2}]$};
            %             \node (RlRl) at (-2,-6) {$[H]$};
            %             \node (RlRr) at (2,-6) {$U$};
            %     \node (Rr) at (6,-3) {$D_2(C(T))$};
            %         \node (RrL) at (4,-5) {$D_1(C(T))$};
            %             \node (RrLL) at (2,-7) {$U$};
            %             \node (RrLR) at (6,-7) {$C_{0,2}(C(T))$}; 
            %                 \node (RrLRl) at (4,-8) {$C(T)$};
            %                 \node (RrLRr) at (8,-8) {$C(T)$};
            %         \node (RrR) at (8,-5) {$C_{0,2}(C(T))$}; 
            %             \node (RrRl) at (6,-6) {$C(T)$};
            %             \node (RrRr) at (10,-6) {$C(T)$};
    \end{scope}
    
    \begin{scope}
        \path [->] (Kn) edge node [anchor=south] {$v^2$} (K1);
        \path [->] (K1) edge node [anchor=south] {$v^{-2}$} (K2);
        \path [->] (K2) edge node [anchor=south] {$v^{2}$} (K3);
        \path [->] (K3) edge node [anchor=south] {$v^{-2}$} (K4);
        \path [->] (K4) edge node [anchor=south] {$v^{-2}$} (K5);
        \path [->] (K5) edge node [anchor=south] {$v^{2}$} (K6);
        \path [->] (K6) edge node [anchor=south] {$v^{-2}$} (K7);
        \path [->] (K7) edge node [anchor=south] {$v^{2}$} (K8);
        \path [->] (K8) edge node [anchor=south] {$v^{-2}$} (K9);
        \path [->] (K9) edge node [anchor=south] {$v^{-2}$} (K10);

        \path [->] (Kn) edge node [anchor=west] {$v^{2}$} (J1); 
        \path [->] (K1) edge node [anchor=west] {$-1$} (J2); 
        \path [->] (K2) edge node [anchor=west] {$v^{2}$} (J3);
        \path [->] (K3) edge node [anchor=west] {$-1$} (J4);
        \path [->] (K4) edge node [anchor=west] {$-1$} (J5);
        \path [->] (K5) edge node [anchor=west] {$v^{2}$} (J6);
        \path [->] (K6) edge node [anchor=west] {$-1$} (J7);
        \path [->] (K7) edge node [anchor=west] {$v^{2}$} (J8);
        \path [->] (K8) edge node [anchor=west] {$-1$} (J9);
        \path [->] (K9) edge node [anchor=west] {$-1$} (J10);
            % \path [->] (Rl) edge node [anchor=east] {$-$} (RlL);
            % \path [->] (Rl) edge node [anchor=west] {$0$} (RlR);

            %     \path [->] (RlL) edge node [anchor=east] {$-$} (RlLL);
            %     \path [->] (RlL) edge node [anchor=west] {$0$} (RlLR);
        
            % \path [->] (Rr) edge node [anchor=east] {$-$} (RrL);
            % \path [->] (Rr) edge node [anchor=west] {$0$} (RrR);

            %     \path [->] (RrL) edge node [anchor=east] {$-$} (RrLL);
            %     \path [->] (RrL) edge node [anchor=west] {$0$} (RrLR);
    \end{scope}

\end{tikzpicture}} 
            \caption{A Skein tree for relating the F polynomials of $K_n$ and $K_{n-1}$.} 
            \label{fig:skeintree} 
        \end{figure}
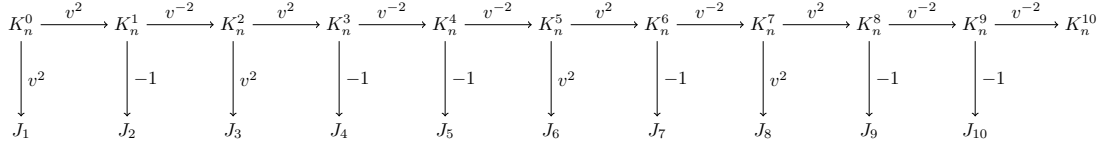 

\subsection{Proof of Theorem~\ref{thm:main_homfly}}

Using Lemma~\ref{lem:5twist} we can prove Theorem~\ref{thm:main_homfly}. For that, we need a computation of the $F$ polynomials for the $J_i$.

\begin{prop}\label{prop:Jipolys}
    The $F$ polynomials of the links $J_i$ and $J'_i$ are as given in Table \ref{tab:tab_homfly_values}.
\end{prop}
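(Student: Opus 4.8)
The plan is to compute each $F_{J_i}$ (and each $F_{J'_i}$) directly, without applying the skein relation to the $J_i$ themselves, by exploiting the linking number formula for the degree-$0$ HOMFLYPT polynomial recorded above. For a two-component link $L = C_1 \cup C_2$ that formula reads
\[
F_L(v) = (v^{-2}-1)\, v^{2\lk(C_1,C_2)}\, F_{C_1}(v)\, F_{C_2}(v),
\]
so the entire computation collapses to two pieces of data for each $J_i$: the knot types of its two components, and their linking number. Since $J_i$ is obtained from the knot $K_n$ by changing the crossings $c_{i+1},\dots,c_{10}$ and taking the oriented resolution at $c_i$, and since the oriented resolution of a self-crossing of a knot always produces a two-component link, each $J_i$ (and $J'_i$) is automatically a two-component link to which this formula applies.

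First I would analyze the components. For each $i$ I would simplify the diagram of $J_i$ drawn in Figure~\ref{fig:Ji} by Reidemeister moves to identify $C_1$ and $C_2$ as specific knots. The expectation—consistent with the fact that the ambient Hopf RBG construction assembles $K_n$ out of unknotted pieces—is that after the resolution both components are unknots, so that $F_{C_1}=F_{C_2}=1$ and $F_{J_i}$ is governed entirely by the linking number; in the rare case that a component fails to be unknotted, its $F$-polynomial is itself accessible by the same inductive twisting argument and is substituted into the formula.

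Second I would compute the linking numbers, which is the heart of the matter. The delicate point is the box of $n-1$ remaining full twists, which survives inside $J_i$. After resolving $c_i$, the strands running through this box either all belong to a single component of $J_i$—in which case the box contributes only to that component's writhe and $\lk(C_1,C_2)$ is independent of $n$—or they split across the two components, in which case each full twist shifts $\lk(C_1,C_2)$ by a fixed (signed) amount and the linking number becomes linear in $n$. Here the orientations (three strands one way, two the other) control the signs. Deciding which alternative occurs, and with what sign, for each $i\in\{1,\dots,10\}$ produces exactly the entries $(v^{-2}-1)v^{2\lk}$ of Table~\ref{tab:tab_homfly_values}. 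The links $J'_i$ coming from $K'_n$ are treated identically; because $K'_n$ is the $\pi$-rotation of the tangle producing $K_n$ (the generalized mutation of Remark~\ref{rem:tangles}), the two families differ only in how their components thread the twist box, and it is precisely this difference in linking data that later forces $F_{K_n}\neq F_{K'_n}$.

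I expect the principal obstacle to be the careful, case-by-case diagrammatic bookkeeping across all twenty links $J_i,J'_i$: tracking orientations through the oriented resolutions so that the smoothing coefficients remain consistent with Lemma~\ref{lem:5twist}, simplifying each diagram to a recognizable two-component link, and—most delicately—pinning down the $n$-dependence of each linking number by determining for every resolution whether the surviving twist box acts as a self-twist or as genuine inter-component linking.
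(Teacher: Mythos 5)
Your opening move coincides with the paper's: each $J_i$ is a two-component link, so the linking number formula $F_{J_i}=(v^{-2}-1)\,v^{2\lk(C_1,C_2)}F_{C_1}F_{C_2}$ reduces the problem to identifying the two components and their linking number. But your central simplification --- that ``after the resolution both components are unknots, so that $F_{C_1}=F_{C_2}=1$ and $F_{J_i}$ is governed entirely by the linking number'' --- is false, and the table you are trying to prove already shows this. If both components were unknots, then $F_{J_i}$ would be $(v^{-2}-1)$ times a single monomial $v^{2\lk}$, no matter how $\lk$ depends on $n$. Among the twenty entries of Table~\ref{tab:tab_homfly_values}, only $F_{J_9}=F_{J'_9}=(v^{-2}-1)$ has this form; every other entry contains a non-monomial factor, and most contain coefficients linear in $n$, such as $(6n-2)$, $-(14n-4)$, and $11n$ in $F_{J_1}$, which no assignment of linking numbers can produce. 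Concretely, for $J_1$ the paper shows (Figure~\ref{fig:J1A1}) that the components are an unknot $B^1$ and a knot $A^1_n$ with $\lk(A^1_n,B^1)=0$; the entire table entry is $(v^{-2}-1)F_{A^1_n}$, where $A^1_n$ is a knotted twist family with $F_{A^1_0}=2v^6-3v^4-v^2+4-v^{-2}\neq 1$. So the linking number, which you call ``the heart of the matter,'' contributes nothing here; the component polynomial is everything.

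This puts essentially all of the real work into the case you set aside as ``rare'' and cover with a single clause. Making that clause precise is the actual content of the paper's proof of the proposition: the component $A^1_n$ is a twist family on four strands, so one needs a new 4-twist skein induction (Lemma~\ref{lem:4twist}), whose application spawns six further two-component links $I_1,\ldots,I_6$; their knotted components are again twist families, handled by the 3-twist and 2-twist formulas (Lemmas~\ref{lem:2twist_one_up_one_down}, \ref{lem:2twist_all_up}, and~\ref{lem:3twist}); the resulting linear recursions are solved in closed form by Lemma~\ref{lem:ana1}, whose term $(n-n_0)Zv^{-4n}$ is precisely the source of the $n$-linear coefficients; and every one of these inductions must be anchored by a base case ($F_{A^1_0}$, $F_{X^2_0}$, etc.) computed by sage --- a step your proposal never mentions, and without which none of the inductions close. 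In short, your first reduction is correct and agrees with the paper, but the proposal as written would stall already at $J_1$: the linking-number bookkeeping is the easy part, and the part you dismissed is the proof.
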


We will prove Proposition \ref{prop:Jipolys} for $i=1$ in Section \ref{sec:J1}. We suppress the proof for $i>1$; for all $i>1$ the proof is similar to and simpler than the proof for $i=1$. 

We will also need the following lemma, which can be proven via a standard induction argument.

\begin{lem}\label{lem:ana1}
	Let $X$, $Y$, and $Z$ be polynomials in $v$. We define a sequence of polynomials $F_n$ via the recursive formula
	\begin{align*}
		F_n=X+Yv^{-2n}+Zv^{-4n}+v^{-4}F_{n-1}.
	\end{align*}
Then for all $n>n_0\geq0$ we have
	\begin{align*}
	F_n=X\frac{1-v^{-4(n-n_0)}}{1-v^{-4}}+Yv^{-2n}\frac{v^{-2(n-n_0)}-1}{v^{-2}-1}+(n-n_0)Zv^{-4n}+v^{-4(n-n_0-1)}F_{n_0}\qed
	\end{align*}
\end{lem}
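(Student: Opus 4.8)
The plan is to fix $n_0$ and induct on $n\ge n_0+1$, with $n_0$ held fixed throughout. It is convenient first to rewrite the two quotients in the claimed formula as the finite geometric sums
\begin{align*}
\frac{1-v^{-4(n-n_0)}}{1-v^{-4}}&=\sum_{j=0}^{n-n_0-1}v^{-4j},\\
\frac{v^{-2(n-n_0)}-1}{v^{-2}-1}&=\sum_{j=0}^{n-n_0-1}v^{-2j},
\end{align*}
so that the entire identity lives in $\Z[v,v^{-1}]$ rather than in its fraction field, and every step below becomes a manipulation of Laurent polynomials. For the \emph{base case} $n=n_0+1$ each sum reduces to its single $j=0$ term, the factor $(n-n_0)$ equals $1$, and the proposed right-hand side collapses to $X+Yv^{-2(n_0+1)}+Zv^{-4(n_0+1)}+v^{-4}F_{n_0}$, which is exactly the defining recursion evaluated at $n=n_0+1$. (Note that the base case already forces the coefficient of $F_{n_0}$ to be $v^{-4}$, consistent with the exponent $-4(n-n_0)$ produced below.)

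For the \emph{inductive step} I would assume the formula at $n-1$ and substitute it into $F_n=X+Yv^{-2n}+Zv^{-4n}+v^{-4}F_{n-1}$. The verification then decouples into four independent bookkeeping checks, one for the coefficient of each of $X$, $Y$, $Z$, and $F_{n_0}$, namely
\begin{align*}
1+v^{-4}\sum_{j=0}^{n-n_0-2}v^{-4j}&=\sum_{j=0}^{n-n_0-1}v^{-4j},\\
v^{-2n}+v^{-4}\,v^{-2(n-1)}\sum_{j=0}^{n-n_0-2}v^{-2j}&=v^{-2n}\sum_{j=0}^{n-n_0-1}v^{-2j},\\
v^{-4n}+v^{-4}(n-1-n_0)v^{-4(n-1)}&=(n-n_0)v^{-4n},\\
v^{-4}\,v^{-4(n-1-n_0)}&=v^{-4(n-n_0)}.
\end{align*}
The first and fourth are immediate; the third is the identity $(n-1-n_0)+1=n-n_0$ after noting $v^{-4}v^{-4(n-1)}=v^{-4n}$; and the second follows after factoring $v^{-2n}$ out of both sides, using $v^{-4}v^{-2(n-1)}=v^{-2n}v^{-2}$ together with the reindexing $v^{-2}\sum_{j=0}^{n-n_0-2}v^{-2j}=\sum_{j=1}^{n-n_0-1}v^{-2j}$. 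Adding the four contributions reproduces the formula at $n$, which closes the induction.

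I do not expect a genuine obstacle: the argument is entirely mechanical once the quotients are read as geometric sums. The only place demanding a little care is the $Y$-coefficient identity, where one must track the shift $v^{-2(n-1)}$ versus $v^{-2n}$ correctly; reading the quotient as $\sum_j v^{-2j}$ makes the required reindexing transparent and removes any temptation to mishandle the $v^{\pm2}$ bookkeeping. Everything else is a one-line check, which is why the statement can fairly be described as a standard induction.
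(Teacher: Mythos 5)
Your induction itself is sound: the base case and all four coefficient checks in the inductive step are correct, and reading the two quotients as the geometric sums $\sum_{j=0}^{n-n_0-1}v^{-4j}$ and $\sum_{j=0}^{n-n_0-1}v^{-2j}$ is the natural way to organize the bookkeeping. This is exactly the ``standard induction argument'' that the paper invokes for Lemma~\ref{lem:ana1} without writing it down, so in approach you and the paper coincide.

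However, you should state explicitly that what you have proved is \emph{not} the formula as printed. Your induction produces the last term $v^{-4(n-n_0)}F_{n_0}$ (this is forced by your fourth check, $v^{-4}\cdot v^{-4(n-1-n_0)}=v^{-4(n-n_0)}$), whereas the printed statement has $v^{-4(n-n_0-1)}F_{n_0}$. The printed version is in fact false: at $n=n_0+1$ it reads $F_{n_0+1}=X+Yv^{-2(n_0+1)}+Zv^{-4(n_0+1)}+F_{n_0}$, while the recursion gives $v^{-4}F_{n_0}$ as the last term, so the two disagree unless $F_{n_0}=0$. Your base-case sentence claims that ``the proposed right-hand side collapses to $X+Yv^{-2(n_0+1)}+Zv^{-4(n_0+1)}+v^{-4}F_{n_0}$,'' which is not true of the right-hand side as printed; your parenthetical shows you noticed the tension, but in a careful write-up this must be flagged as an off-by-one correction to the statement, not silently substituted, since a reader comparing your proof against the lemma will otherwise see a contradiction at the very first step. (The same off-by-one propagates into the paper's application of the lemma in the proof of Theorem~\ref{thm:main_homfly}, where $v^{-4(n-6)}F_{K_5}$ should be $v^{-4(n-5)}F_{K_5}$; indeed the corrected exponent is what makes the leading term $1-v^{-4(n-5)}$ cancel correctly against the top of $v^{-4(n-5)}F_{K_5}$.)
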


\begin{proof}[Proof of Theorem~\ref{thm:main_homfly}]
	We will prove the theorem for positive $n$. For negative $n$, we use the mirror formula for $F_L$ and proceed analogously. 
	First, we notice that we can orient the knots $K_n$ from Figure~\ref{fig:example} such that we can apply Lemma~\ref{lem:5twist}. Below we explain how to compute the $F$ polynomials of the $2$-component links $J_i$. In particular, we get for all $n\geq6$ the values shown in Table~\ref{tab:tab_homfly_values}.

Plugging in the values for $F_{J_i}$ into Lemma~\ref{lem:5twist} we get for all $n\geq6$
\begin{align*}
	 F_{K_n}=\,&X+Yv^{-2n}+Zv^{-4n}+ v^{-4} F_{K_{n-1}} 
\end{align*}
for polynomials $X$, $Y$, and $Z$ given by
{\scriptsize
\begin{align*}
	X=&\,1-v^{-4}\\
	Y=&\,-v^{-6}+(4-2n)v^{-4}+(10n-8)v^{-2}+(12-20n)+(20n-13)v^{2}+(8-10n)v^{4}+(2n-2)v^{6}\\
	Z=&\,-(2+8n)v^{-4}+38nv^{-2}+(17-70n)+(60n-28)v^{2}+(12-20n)v^{4}+(4-2n)v^{6}+(2n-3)v^{8}.
\end{align*}}From Lemma~\ref{lem:ana1} it follows that for all $n\geq6$, we have
\begin{align*}
	F_{K_n}=&\,1-v^{-4(n-5)}+Yv^{-2n}\left(\sum_{k=0}^{n-6}v^{-2k}\right)+(n-5)Zv^{-4n}+v^{-4(n-6)}F_{K_5}.
\end{align*}Using sage we compute $F_{K_5}$ to be
{\scriptsize
\begin{align*}
	F_{K_5}=&\,1+8v^{-4}-36v^{-6}+59v^{-8}-41v^{-10}+24v^{-12}-7v^{-14}-242v^{-16}+755v^{-18}\\
	&-958v^{-20}+569v^{-22}-131v^{-24}
\end{align*}}and by plugging in the values of $F_{K_5}$, $Y$, and $Z$ we see that
\begin{align*}
	F_{K_n}=1+(2n-2)v^{6-2n}+\text{ lower order terms}
\end{align*}
for $n\geq6$. For the other positive values of $n$ (i.e.\ for $n=1,2,3,4,5$) we directly compute $F$ using sage~\cite{data}. 

For the knots $K'_n$, we perform the same steps. Here we get
for all $n\geq6$ the values of $F_{J'_i}$ shown in Table~\ref{tab:tab_homfly_values}
from which we compute
{\scriptsize
	\begin{align*}
		X'=&\,1-v^{-4}\\
		Y'=&\,-(1+n)v^{-6}+(4+4n)v^{-4}+(-5n-8)v^{-2}+12+(5n-13)v^{2}+(8-4n)v^{4}+(n-2)v^{6}\\
		Z'=&\,-(2+7n)v^{-4}+32nv^{-2}+(17-55n)+(40n-28)v^{2}+(12-5n)v^{4}+(4-8n)v^{6}+(3n-3)v^{8}.
\end{align*}}

Using sage we compute $F_{K'_5}$ to be
{\scriptsize
	\begin{align*}
    F_{K'_5}=&\,1+3v^{-4}-10v^{-6}+5v^{-8}+15v^{-10}+10v^{-12}-91v^{-14}-18v^{-16}+459v^{-18}\\
		&-739v^{-20}+483v^{-22}-117v^{-24}
\end{align*}}and thus we can deduce from Lemma~\ref{lem:ana1} and the computations of $F_{K'_n}$ for small $n$, the claimed statements about $F_{K'_n}$.
\end{proof}

\subsection{Inductive formulas for 2, 3, and 4 twists}
For the proof of Proposition \ref{prop:Jipolys} we will need the following inductive formulas, which are all similar to Lemma~\ref{lem:5twist}, but for $2$, $3$, and $4$ twisting strands. 

\begin{lem}\label{lem:2twist_one_up_one_down}
	For $n\in\N_0$, let $K_n$ be the infinite family of knots that are outside of a $3$-ball $B$ identical and inside $B$ have $2$ strands performing $n$-full twists, oriented in opposite directions as shown Figure~\ref{fig:2strands_up_down}. Let $Z$ be the $2$-component link that is outside of $B$ the same as $K_n$ but inside $B$ as shown in Figure~\ref{fig:2strands_up_down}.
	For $n\geq1$, their $F$ polynomials are related by
	\begin{align*}
		F_{K_n}=\,& -F_Z \left(\sum_{k=0}^{n-1}v^{-2k}\right)+ v^{-2n} F_{K_{0}}
		= -F_Z \frac{v^{-2n}-1}{v^{-2}-1}+ v^{-2n} F_{K_{0}}.
	\end{align*}	
\end{lem}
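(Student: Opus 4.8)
The plan is to reduce the claimed closed formula to a single one-step recursion and then iterate, exactly as one does for geometric sums. Concretely, I would first prove that for every $n\geq 1$ one has
\begin{equation*}
F_{K_n}=-F_Z+v^{-2}F_{K_{n-1}},
\end{equation*}
and then feed this into a routine induction (of the same flavour as Lemma~\ref{lem:ana1}) to obtain $F_{K_n}=-F_Z\sum_{k=0}^{n-1}v^{-2k}+v^{-2n}F_{K_0}$. The second equality in the statement is then just the evaluation of the finite geometric series $\sum_{k=0}^{n-1}v^{-2k}=\frac{v^{-2n}-1}{v^{-2}-1}$, so no further work is needed there.

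The heart of the argument is the one-step recursion, which I would extract from a single application of the skein relation for $F_L$. With the orientations drawn in Figure~\ref{fig:2strands_up_down}, the two anti-parallel strands realize each full twist as two crossings of the same sign, and for this orientation that sign is negative; thus $K_n$ contains inside $B$ a twist region of $2n$ negative crossings. I would pick one such crossing $c$. Since $K_n$ is a knot, both strands through $c$ lie on the same component, so $\delta=0$ and the skein relation reads $v^{-2}F_{(K_n)_+}-F_{K_n}=F_{(K_n)_0}$, where $(K_n)_+$ changes $c$ to a positive crossing and $(K_n)_0$ is its oriented resolution. I would then identify the two right-hand objects geometrically: changing $c$ creates a pair of oppositely signed crossings that cancel by a Reidemeister~II move, so $(K_n)_+=K_{n-1}$; and the oriented resolution of $c$ turns the two anti-parallel strands into a cap--cup, after which the remaining crossings of the twist region become removable curls via Reidemeister~I, so $(K_n)_0$ is isotopic to the fixed link $Z$ independently of $n$. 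Substituting gives $v^{-2}F_{K_{n-1}}-F_{K_n}=F_Z$, which rearranges to the recursion above.

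The only genuine content is the pair of diagrammatic identifications just described --- that a single crossing change followed by one Reidemeister~II move undoes exactly one full twist, and that the oriented resolution absorbs all remaining twisting into curls and hence yields an $n$-independent link $Z$. Everything after that is formal. I expect the main obstacle to be purely bookkeeping: confirming the sign of the twist-region crossings for the orientation in Figure~\ref{fig:2strands_up_down}, so that the recursion carries $v^{-2}$ rather than $v^{2}$ and matches the exponents $v^{-2n}$ in the statement, together with the check that $\delta=0$ rather than $1$ at the chosen crossing. Were the crossings positive for a different orientation convention, one would instead pass through the mirror formula $F_{-L}(v)=F_L(v^{-1})$; but for the configuration as drawn the computation closes directly.
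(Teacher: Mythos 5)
Your proof is correct and takes essentially the same route as the paper's: one application of the $\delta=0$ skein relation at a single negative crossing of the twist region, identifying the crossing change with $K_{n-1}$ and the oriented resolution with the $n$-independent link $Z$, followed by induction and summation of the geometric series. The diagrammatic details you supply (the Reidemeister~II cancellation after the crossing change, and the Reidemeister~I removal of the residual curls after resolving) merely spell out what the paper asserts implicitly, so nothing is missing.
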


\begin{figure}[t] 
	\centering
	\def\svgwidth{0,4\columnwidth}
	%% Creator: Inkscape 1.2.2 (b0a8486541, 2022-12-01), www.inkscape.org
%% PDF/EPS/PS + LaTeX output extension by Johan Engelen, 2010
%% Accompanies image file '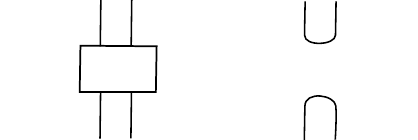' (pdf, eps, ps)
%%
%% To include the image in your LaTeX document, write
%%   \input{<filename>.pdf_tex}
%%  instead of
%%   \includegraphics{<filename>.pdf}
%% To scale the image, write
%%   \def\svgwidth{<desired width>}
%%   \input{<filename>.pdf_tex}
%%  instead of
%%   \includegraphics[width=<desired width>]{<filename>.pdf}
%%
%% Images with a different path to the parent latex file can
%% be accessed with the `import' package (which may need to be
%% installed) using
%%   \usepackage{import}
%% in the preamble, and then including the image with
%%   \import{<path to file>}{<filename>.pdf_tex}
%% Alternatively, one can specify
%%   \graphicspath{{<path to file>/}}
%% 
%% For more information, please see info/svg-inkscape on CTAN:
%%   http://tug.ctan.org/tex-archive/info/svg-inkscape
%%
\begingroup%
  \makeatletter%
  \providecommand\color[2][]{%
    \errmessage{(Inkscape) Color is used for the text in Inkscape, but the package 'color.sty' is not loaded}%
    \renewcommand\color[2][]{}%
  }%
  \providecommand\transparent[1]{%
    \errmessage{(Inkscape) Transparency is used (non-zero) for the text in Inkscape, but the package 'transparent.sty' is not loaded}%
    \renewcommand\transparent[1]{}%
  }%
  \providecommand\rotatebox[2]{#2}%
  \newcommand*\fsize{\dimexpr\f@size pt\relax}%
  \newcommand*\lineheight[1]{\fontsize{\fsize}{#1\fsize}\selectfont}%
  \ifx\svgwidth\undefined%
    \setlength{\unitlength}{200.35511389bp}%
    \ifx\svgscale\undefined%
      \relax%
    \else%
      \setlength{\unitlength}{\unitlength * \real{\svgscale}}%
    \fi%
  \else%
    \setlength{\unitlength}{\svgwidth}%
  \fi%
  \global\let\svgwidth\undefined%
  \global\let\svgscale\undefined%
  \makeatother%
  \begin{picture}(1,0.33498648)%
    \lineheight{1}%
    \setlength\tabcolsep{0pt}%
    \put(0,0){\includegraphics[width=\unitlength,page=1]{2strands_up_down.pdf}}%
    \put(0.26092541,0.15309196){\color[rgb]{0,0,0}\makebox(0,0)[lt]{\lineheight{0.1}\smash{\begin{tabular}[t]{l}$n$\end{tabular}}}}%
    \put(-0.00371341,0.1450478){\color[rgb]{0,0,0}\makebox(0,0)[lt]{\lineheight{0.1}\smash{\begin{tabular}[t]{l}$K_{n}$\end{tabular}}}}%
    \put(0.90057647,0.14082033){\color[rgb]{0,0,0}\makebox(0,0)[lt]{\lineheight{0.1}\smash{\begin{tabular}[t]{l}$Z$\end{tabular}}}}%
    \put(0,0){\includegraphics[width=\unitlength,page=2]{2strands_up_down.pdf}}%
  \end{picture}%
\endgroup%

	\caption{The knot $K_n$ and the $2$-component link $Z$.}
	\label{fig:2strands_up_down}
\end{figure}

\begin{proof}
	Any crossing $c$ in the twist region of $K_n$ is negative. By changing $c$ we get $K_{n-1}$ and its resolution is isotopic to $Z$. Thus the skein relation of $F$ yields
	\begin{align*}
		F_{K_n}=-F_Z+v^{-2}F_{K_{n-1}}
	\end{align*}
	and then induction implies the claimed formula.
\end{proof}

\begin{lem}\label{lem:2twist_all_up}
		For $n\in\N_0$, let $K_n$ be the infinite family of knots that are outside of a $3$-ball $B$ identical and inside $B$ have $2$ strands performing $n$-full twists, oriented in the same direction as shown in Figure~\ref{fig:2strands_all_up}. Let $X\cup Y$ be the $2$-component link with components $X$ and $Y$ that is outside of $B$ the same as $K_n$ but inside $B$ as shown in Figure~\ref{fig:2strands_all_up}.
	For $n\geq1$, their $F$ polynomials are related by
	\begin{align*}
		F_{K_n}=\,& (v^{-2}-1) n v^{2n} v^{2\lk(X,Y)} F_X F_Y+ v^{2n} F_{K_{0}}.
	\end{align*}	
\end{lem}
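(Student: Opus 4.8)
The plan is to mimic the structure of Lemma~\ref{lem:2twist_one_up_one_down}: set up a one-step skein recursion relating $F_{K_n}$ to $F_{K_{n-1}}$ and a fixed resolution, then solve the recursion by induction. The key difference is the orientation of the two twisting strands. Since both strands now point in the \emph{same} direction, each crossing in the twist region is \emph{positive} rather than negative, so I would apply the skein relation in the form $v^{-2}F_{L_+}-F_{L_-}=\delta\text{-term}$ with $L_+=K_n$ and $L_-=K_{n-1}$ (removing one full twist reduces the crossing count appropriately; one should be careful that a full twist is two crossings, but the pattern reduces to a clean per-twist relation). Crucially, because the two strands lie on the \emph{same} component of the knot $K_n$ (they form a twist region of a single knot, so the two strands at a crossing belong to the same component), we are in the case $\delta=0$, and the oriented resolution $L_0$ splits off to produce the two-component link $X\cup Y$ shown in Figure~\ref{fig:2strands_all_up}.

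First I would verify the single-step relation
\begin{align*}
	F_{K_n}=v^{2}\,F_{X\cup Y}+v^{2}F_{K_{n-1}},
\end{align*}
or whatever the correct constants turn out to be after tracking the positive-crossing skein relation and the fact that the resolved link $X\cup Y$ is \emph{independent of $n$} (its smooth type does not change as we add twists elsewhere, up to the framing/linking bookkeeping). The resolution $X\cup Y$ has a fixed $F$ polynomial, and the linking-number formula from the appendix,
\begin{align*}
	F_{X\cup Y}(v)=(v^{-2}-1)\,v^{2\lk(X,Y)}F_X(v)F_Y(v),
\end{align*}
rewrites $F_{X\cup Y}$ in terms of the component polynomials. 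Substituting this into the recursion and iterating from $K_0$ should give the stated closed form, where the factor of $n$ arises because the constant term $F_{X\cup Y}$ gets added $n$ times, each carrying an accumulating power of $v^{2}$.

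The main obstacle, and the step requiring the most care, is pinning down the exact powers of $v$ in the one-step skein relation and confirming that the oriented resolution genuinely yields the \emph{same} two-component link $X\cup Y$ at every stage (so that its $F$ polynomial is a true constant in the recursion, pulling out the factor of $n$). I would handle this by explicitly drawing the positive crossing in the same-orientation twist region, applying $v^{-2}F_{K_n}-F_{K_{n-1}}=F_{X\cup Y}$ (the $\delta=0$ case), rearranging to $F_{K_n}=v^{2}F_{X\cup Y}+v^{2}F_{K_{n-1}}$, and then checking the base case $n=0$ (where no twisting occurs and $F_{K_0}$ is the stated seed). A clean induction of the form
\begin{align*}
	F_{K_n}=v^{2}F_{X\cup Y}\sum_{k=0}^{n-1}v^{2k}+v^{2n}F_{K_0},
\end{align*}
together with the geometric-sum identity $\sum_{k=0}^{n-1}v^{2k}=\frac{v^{2n}-1}{v^{2}-1}$ and the linking-number formula above, should collapse to exactly
\begin{align*}
	F_{K_n}=(v^{-2}-1)\,n\,v^{2n}v^{2\lk(X,Y)}F_XF_Y+v^{2n}F_{K_0},
\end{align*}
where the combination $(v^{-2}-1)n$ signals that the geometric sum telescopes against the $\delta=0$ skein coefficient in a way that produces a literal multiplicative $n$ rather than a rational function in $v$; confirming this cancellation is the crux of the verification.
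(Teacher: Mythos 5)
There is a genuine gap, and it sits precisely at the point you flagged as the crux. Your recursion assumes that the oriented resolution of the chosen positive crossing is the \emph{same} link $X\cup Y$ at every stage, so that its $F$ polynomial is a constant in the induction. This is false: when you resolve one crossing in the twist region of $K_m$, the remaining twists survive as twists \emph{between} the two new components, so the resolution is $X\cup Y$ together with $(m-1)$ additional full twists between $X$ and $Y$. The components are still the knots $X$ and $Y$, but the linking number is $\lk(X,Y)+(m-1)$, so by the linking-number formula the resolution's $F$ polynomial is
\begin{align*}
	(v^{-2}-1)\,v^{2(\lk(X,Y)+m-1)}\,F_XF_Y,
\end{align*}
which depends on $m$. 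Moreover, the cancellation you hope for cannot occur: if the resolution really were constant, unrolling $F_{K_m}=v^2F_{X\cup Y}+v^2F_{K_{m-1}}$ gives $F_{X\cup Y}\sum_{k=1}^{n}v^{2k}+v^{2n}F_{K_0}$, and a geometric sum times a fixed nonzero polynomial can never equal $n$ times a monomial multiple of a fixed polynomial (compare $n=1$ and $n=2$: the first forces the polynomials to agree, the second then forces them to vanish). Likewise $(v^{-2}-1)\sum_{k=0}^{n-1}v^{2k}$ telescopes to $v^{-2}-v^{2n-2}$, a difference of two monomials, not a multiple of $n$. So no rearrangement turns $\frac{v^{2n}-1}{v^{2}-1}$ into the literal factor $n$ in the statement.

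The correct mechanism, which is the paper's proof, is that the $m$-dependence of the resolution exactly offsets the powers of $v^2$ accumulated from earlier skein steps. The one-step relation is
\begin{align*}
	F_{K_m}=(v^{-2}-1)\,v^{2m}v^{2\lk(X,Y)}F_XF_Y+v^{2}F_{K_{m-1}},
\end{align*}
and when you unroll from $m=n$ down to $m=1$, the step at stage $m$ enters the final expression multiplied by $v^{2(n-m)}$, giving $(v^{-2}-1)v^{2n}v^{2\lk(X,Y)}F_XF_Y$ — the \emph{same} quantity for every $m$. Summing $n$ identical contributions produces the factor $n$, and the base case contributes $v^{2n}F_{K_0}$. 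Everything else in your setup (each crossing in the twist region is positive; $\delta=0$ because both strands lie on the single component of the knot; the rearranged skein relation $F_{K_m}=v^2F_{L_0}+v^2F_{K_{m-1}}$; induction on the number of twists) agrees with the paper's argument; the missing idea is tracking how the resolution link, and in particular its linking number, changes along the induction.
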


\begin{figure}[t] 
	\centering
	\def\svgwidth{0,4\columnwidth}
	%% Creator: Inkscape 1.2.2 (b0a8486541, 2022-12-01), www.inkscape.org
%% PDF/EPS/PS + LaTeX output extension by Johan Engelen, 2010
%% Accompanies image file '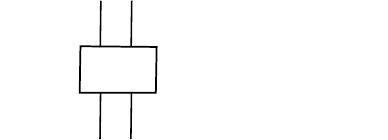' (pdf, eps, ps)
%%
%% To include the image in your LaTeX document, write
%%   \input{<filename>.pdf_tex}
%%  instead of
%%   \includegraphics{<filename>.pdf}
%% To scale the image, write
%%   \def\svgwidth{<desired width>}
%%   \input{<filename>.pdf_tex}
%%  instead of
%%   \includegraphics[width=<desired width>]{<filename>.pdf}
%%
%% Images with a different path to the parent latex file can
%% be accessed with the `import' package (which may need to be
%% installed) using
%%   \usepackage{import}
%% in the preamble, and then including the image with
%%   \import{<path to file>}{<filename>.pdf_tex}
%% Alternatively, one can specify
%%   \graphicspath{{<path to file>/}}
%% 
%% For more information, please see info/svg-inkscape on CTAN:
%%   http://tug.ctan.org/tex-archive/info/svg-inkscape
%%
\begingroup%
  \makeatletter%
  \providecommand\color[2][]{%
    \errmessage{(Inkscape) Color is used for the text in Inkscape, but the package 'color.sty' is not loaded}%
    \renewcommand\color[2][]{}%
  }%
  \providecommand\transparent[1]{%
    \errmessage{(Inkscape) Transparency is used (non-zero) for the text in Inkscape, but the package 'transparent.sty' is not loaded}%
    \renewcommand\transparent[1]{}%
  }%
  \providecommand\rotatebox[2]{#2}%
  \newcommand*\fsize{\dimexpr\f@size pt\relax}%
  \newcommand*\lineheight[1]{\fontsize{\fsize}{#1\fsize}\selectfont}%
  \ifx\svgwidth\undefined%
    \setlength{\unitlength}{185.95892026bp}%
    \ifx\svgscale\undefined%
      \relax%
    \else%
      \setlength{\unitlength}{\unitlength * \real{\svgscale}}%
    \fi%
  \else%
    \setlength{\unitlength}{\svgwidth}%
  \fi%
  \global\let\svgwidth\undefined%
  \global\let\svgscale\undefined%
  \makeatother%
  \begin{picture}(1,0.36046362)%
    \lineheight{1}%
    \setlength\tabcolsep{0pt}%
    \put(0,0){\includegraphics[width=\unitlength,page=1]{2strands_all_up.pdf}}%
    \put(0.2811252,0.16289007){\color[rgb]{0,0,0}\makebox(0,0)[lt]{\lineheight{0.1}\smash{\begin{tabular}[t]{l}$n$\end{tabular}}}}%
    \put(-0.00400088,0.15422316){\color[rgb]{0,0,0}\makebox(0,0)[lt]{\lineheight{0.1}\smash{\begin{tabular}[t]{l}$K_{n}$\end{tabular}}}}%
    \put(0.89326672,0.04763){\color[rgb]{0,0.58431373,0}\makebox(0,0)[lt]{\lineheight{0.1}\smash{\begin{tabular}[t]{l}$Y$\end{tabular}}}}%
    \put(0.88264593,0.28524368){\color[rgb]{1,0,0}\makebox(0,0)[lt]{\lineheight{0.1}\smash{\begin{tabular}[t]{l}$X$\end{tabular}}}}%
    \put(0,0){\includegraphics[width=\unitlength,page=2]{2strands_all_up.pdf}}%
  \end{picture}%
\endgroup%

	\caption{The knot $K_n$ and the $2$-component link $X\cup Y$.}
	\label{fig:2strands_all_up}
\end{figure}

\begin{proof}
	Any crossing $c$ in the twist region of $K_n$ is positive. By changing $c$ we get $K_{n-1}$ and its resolution is isotopic to the $2$-component link $X\cup Y$ with an additional $(n-1)$-twists between the components $X$ and $Y$. Thus the skein relation of $F$ and the linking number formula yields
	\begin{align*}
		F_{K_n}=(v^{-2}-1)v^{2n}v^{2\lk(X,Y)}F_XF_Y+v^{2}F_{K_{n-1}}
	\end{align*}
	and then induction implies the claimed formula.
\end{proof}

\begin{lem}\label{lem:3twist}
	For $n\in\N_0$, let $K_n$ be the infinite family of knots that are outside of a $3$-ball $B$ identical and inside $B$ have $3$ strands performing $n$-full twists, oriented as shown in Figure~\ref{fig:3strands}. Let $U$, $V$ and $L_{n-1}$ be the links that are outside of $B$ the same as $K_n$ but are inside $B$ as shown in Figure~\ref{fig:3strands}.
	For $n\geq1$, their $F$ polynomials are related by
	\begin{align*}
		F_{K_n}=\,&v^2 F_{L_{n-1}} -v^2 F_U -F_V + v^{-2} F_{K_{n-1}}.
	\end{align*}	
\end{lem}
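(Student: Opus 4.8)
The plan is to reproduce the proof of Lemma~\ref{lem:5twist} essentially verbatim, only with a three-crossing Skein tree in place of the ten-crossing one. First I would single out, inside the ball $B$, the three crossings $c_1,c_2,c_3$ of the twist region drawn in Figure~\ref{fig:3strands} with the property that changing all three of them turns $K_n$ into $K_{n-1}$. As before, I set $K_n^0=K_n$ and let $K_n^i$ be the knot obtained from $K_n$ by changing $c_1,\dots,c_i$, so that $K_n^3=K_{n-1}$; the oriented resolutions of $c_1,c_2,c_3$ are then the two-component links the figure labels $L_{n-1}$, $U$, and $V$, respectively.

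Next I would apply the skein relation for $F$ at each $c_i$ in turn. Since a crossing change never alters the number of components, every $K_n^{i-1}$ is a knot, so each $c_i$ is a self-crossing whose oriented resolution splits one component into two; we are therefore always in the $\delta=0$ branch, and may use $v^{-2}F_{L_+}-F_{L_-}=F_{L_0}$ at every step. Reading the prescribed orientations off Figure~\ref{fig:3strands}, the crossing $c_1$ is positive while $c_2$ and $c_3$ are negative, which give respectively
\begin{align*}
	F_{K_n^0} &= v^2 F_{K_n^1} + v^2 F_{L_{n-1}},\\
	F_{K_n^1} &= v^{-2} F_{K_n^2} - F_U,\\
	F_{K_n^2} &= v^{-2} F_{K_{n-1}} - F_V.
\end{align*}
Substituting these into one another, equivalently traversing the length-three Skein tree assembled exactly as in Figure~\ref{fig:skeintree} (the continue-weights $v^2,v^{-2},v^{-2}$ having partial products $1,v^2,1$), telescopes to
\begin{align*}
	F_{K_n} = v^2 F_{L_{n-1}} - v^2 F_U - F_V + v^{-2} F_{K_{n-1}},
\end{align*}
which is the asserted identity.

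I expect the only delicate part to be the initial bookkeeping rather than any genuine difficulty: one must read off the correct signs $(+,-,-)$ of $c_1,c_2,c_3$ and correctly match the three oriented resolutions with $L_{n-1}$, $U$, and $V$ in Figure~\ref{fig:3strands}. Once these are pinned down, the telescoping is forced, and the $\delta=0$ hypothesis needed at each step is automatic because each intermediate $K_n^{i-1}$ is a single-component knot. This is strictly simpler than Lemma~\ref{lem:5twist}, where ten crossings and five intermediate resolutions had to be tracked, so I anticipate no real obstacle beyond faithfully transcribing the figure.
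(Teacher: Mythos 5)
Your proof is correct and takes essentially the same route as the paper: the paper likewise sets $K_n^0=K_n$, defines $K_n^i$ by changing $c_1,\dots,c_i$ so that $K_n^3=K_{n-1}$, checks (as in Lemma~\ref{lem:5twist}) that the resolutions of the successive crossings give $L_{n-1}$, $U$, and $V$, and obtains the formula by successively applying the skein relation of $F$. Your explicit listing of the three skein identities with signs $(+,-,-)$, the telescoping, and the observation that $\delta=0$ holds automatically because each intermediate $K_n^i$ is a knot simply spells out what the paper leaves implicit.
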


\begin{figure}[t] 
	\centering
	\def\svgwidth{0,55\columnwidth}
	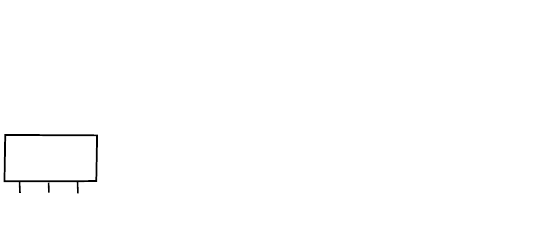
	\caption{The knot $K_n$ with three marked crossings $c_1$, $c_2$, and $c_3$ and the $2$-component links $L_{n-1}$, $U$, and $V$.}
	\label{fig:3strands}
\end{figure}

\begin{rem}
	In general, the link $L_{n-1}$ depends again on $n$. But since $L_{n-1}$ is a $2$-component link, we can use the linking number formula to reduce the computation of $F_{L_{n-1}}$ to the computation of $F$ of the components of $L_{n-1}$, which can be done with Lemmas~\ref{lem:2twist_all_up} and~\ref{lem:2twist_one_up_one_down}. We refer to Section~\ref{sec:J1} for examples.
\end{rem}

\begin{proof}[Proof of Lemma~\ref{lem:3twist}]
	For the crossings $c_1$, $c_2$, and $c_{3}$ in $K_n$ as marked in Figure~\ref{fig:3strands} we define for $i=1,2,3$ the knot $K_n^i$ to be the knot obtained from $K_n$ by changing all crossings $c_j$ for $j\leq i$. We observe that $K_n^{3}=K_{n-1}$ and we set $K_n^0=K_n$.
	Moreover, we check similarly as in the proof of Lemma~\ref{lem:5twist} that the resolutions of these crossings yield the links $L_{n-1}$, $U$, and $V$. Then we get by successively applying the skein relation of $F$ to the crossing $c_{i+1}$ in $K_n^i$, the claimed formula.
\end{proof}

\begin{lem}\label{lem:4twist}
	For $n\in\N_0$, let $K_n$ be the infinite family of knots that are outside of a $3$-ball $B$ identical and inside $B$ have $4$ strands performing $n$-full twists, oriented as shown in Figure~\ref{fig:4strands}. For $i=1,\ldots, 6$, let $I_i$ be the links that are outside of $B$ the same as $K_n$ but are inside $B$ as shown in Figure~\ref{fig:4strands}.
	For $n\geq1$, their $F$ polynomials are related by
	\begin{align*}
		F_{K_n}=\,&-F_{I_1} + F_{I_2} -F_{I_3} +F_{I_4}  -F_{I_5}  - v^{-2} F_{I_{6}} + v^{-4} F_{K_{n-1}}.
	\end{align*}	
\end{lem}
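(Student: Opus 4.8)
The plan is to follow verbatim the template established in the proofs of Lemmas~\ref{lem:5twist} and~\ref{lem:3twist}, replacing the $5$- and $3$-strand skein trees by a $6$-crossing skein tree adapted to the $4$-strand twist region. Concretely, I would mark the six crossings $c_1,\dots,c_6$ of the last full twist of $K_n$ as in Figure~\ref{fig:4strands}, and for $i=0,\dots,6$ set $K_n^i$ to be the knot obtained from $K_n$ by changing the crossings $c_1,\dots,c_i$, so that $K_n^0=K_n$. The two geometric inputs read off from the figure are that changing all six crossings cancels one full twist by Reidemeister~II moves, so $K_n^6=K_{n-1}$, and that the oriented resolution of $c_i$ in $K_n^{i-1}$ is precisely the $2$-component link $I_i$.

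The key structural observation, exactly as in the $5$-strand case, is that every $K_n^i$ is a knot, since crossing changes preserve the number of components. Hence at each crossing the two strands lie on the same component, the skein parameter is $\delta=0$, and the simple skein relation for $F$ always contributes its resolution term; this is why all six links $I_1,\dots,I_6$ appear with nonzero coefficient. At a negative crossing the relation $v^{-2}F_{K_n^i}-F_{K_n^{i-1}}=F_{I_i}$ rearranges to $F_{K_n^{i-1}}=v^{-2}F_{K_n^i}-F_{I_i}$, while at a positive crossing $v^{-2}F_{K_n^{i-1}}-F_{K_n^i}=F_{I_i}$ rearranges to $F_{K_n^{i-1}}=v^{2}F_{K_n^i}+v^{2}F_{I_i}$. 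I would record these six one-step identities in a skein tree as in Figure~\ref{fig:skeintree}.

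Substituting the six identities into one another telescopes $F_{K_n^0}$ into a linear combination of $F_{I_1},\dots,F_{I_6}$ and $F_{K_n^6}=F_{K_{n-1}}$. The coefficient of each $F_{I_i}$ is the product of the knot-to-knot factors accumulated from $K_n^0$ down to the branch at $c_i$, multiplied by the branch factor. Reading the orientations of the four strands in Figure~\ref{fig:4strands}, the signs of $c_1,\dots,c_6$ (viewed in $K_n^{i-1}$) are $(-,+,-,+,-,-)$; with this pattern the accumulated products collapse to $-1,+1,-1,+1,-1,-v^{-2}$ for $F_{I_1},\dots,F_{I_6}$, while the full product of the six knot-to-knot factors is $v^{-2+2-2+2-2-2}=v^{-4}$, giving the coefficient of $F_{K_{n-1}}$. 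This is exactly the stated formula.

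The only genuine content, as in Lemma~\ref{lem:5twist}, is the bookkeeping in Figure~\ref{fig:4strands}: confirming that changing $c_1,\dots,c_6$ removes one full twist (so $K_n^6=K_{n-1}$) and correctly reading the six crossing signs from the chosen orientations. I expect this to be the only delicate point; once the signs are pinned down, the telescoping is identical to the $5$- and $3$-strand arguments and presents no further difficulty.
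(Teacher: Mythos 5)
Your proposal is correct and takes essentially the same route as the paper: the paper's proof likewise marks the six crossings, forms the knots $K_n^i$ with $K_n^0=K_n$ and $K_n^6=K_{n-1}$, checks that the resolutions of the $c_i$ are the links $I_i$, and telescopes the skein relation for $F$ exactly as in Lemmas~\ref{lem:5twist} and~\ref{lem:3twist}. Your explicit sign bookkeeping $(-,+,-,+,-,-)$, the resulting branch coefficients $-1,+1,-1,+1,-1,-v^{-2}$, and the overall factor $v^{-4}$ are exactly what the paper's skein-tree computation produces.
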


\begin{figure}[t] 
	\centering
	\def\svgwidth{0,9\columnwidth}
	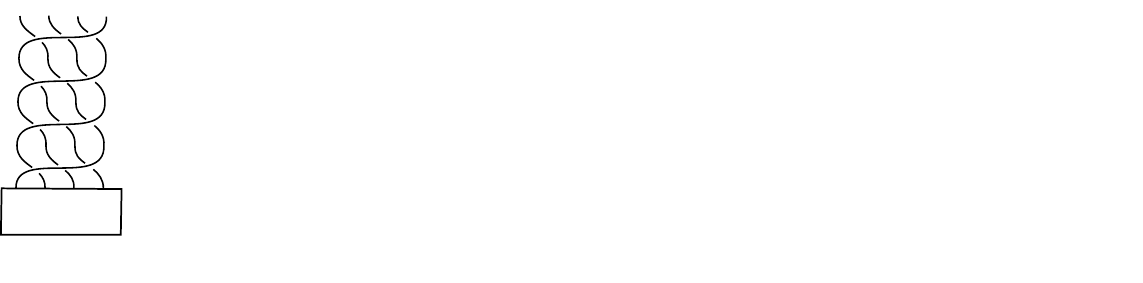
	\caption{The knot $K_n$ with six marked crossings and the $2$-component links $I_1,\ldots ,I_6$.}
	\label{fig:4strands}
\end{figure}

\begin{proof}
	As in the proof of Lemma~\ref{lem:5twist} and~\ref{lem:3twist}, we consider the crossings $c_1,\ldots, c_{6}$ in $K_n$ as marked in Figure~\ref{fig:4strands}. Then we check that with the same notation as there, the links $I_i$ are given by the resolutions and thus the skein relation implies the formula for $F$.
\end{proof}

\subsection{Computation of \texorpdfstring{$J_1$}{J1}}\label{sec:J1}

Using the formulas from Lemmas~\ref{lem:2twist_one_up_one_down}-\ref{lem:4twist} we can compute the $F$ polynomials of the links $J_i$ and $J'_i$. We have suppressed the computations for $i>1$; 
they work analogously. In the rest of this section, $n$ is always a positive number.

\begin{lem}
	The family of links $J_1=J_1^n$ shown in Figure~\ref{fig:J1A1} have 
	{\scriptsize
	\begin{align*}
		F_{J_1}=&\,(v^{-2}-1)\big( 1 + v^{-2n+2} -v^{-2n} -v^{-2n-2} + v^{-2n-4} -(n-2) v^{-4n+6} +(n-3)v^{-4n+4} \\&+(6n-2) v^{-4n+2} -(14n-4) v^{-4n} +11nv^{-4n-2} -(3n+1)v^{-4n-4}  \big).
	\end{align*}}
\end{lem}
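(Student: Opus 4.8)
The plan is to treat $J_1=J_1^n$ as a twist family in its own right and run exactly the inductive machinery already assembled for $K_n$, then solve the resulting recursion with Lemma~\ref{lem:ana1}. First I would read off from Figure~\ref{fig:J1A1} (together with the conventions of Figure~\ref{fig:Ji} and Figure~\ref{fig:5strands}) the precise local picture of $J_1^n$: since $J_1^n$ is obtained from $K_n$ by resolving $c_1$ and switching $c_2,\dots,c_{10}$, while the block of remaining full twists is untouched, the members $J_1^n$ and $J_1^{n-1}$ differ by a single full twist on the surviving strands. I would then determine how many strands persist through that twist region and their orientations, as this dictates which multi-strand formula applies. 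The presence of $v^{-4n}$-terms in the target expression forces a $v^{-4}$-recursion, so I expect to apply the formula whose recursion coefficient is $v^{-4}$, namely Lemma~\ref{lem:4twist}, to obtain a relation of the shape $F_{J_1^n} = (\text{inhomogeneous term}) + v^{-4}F_{J_1^{n-1}}$.

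Next I would compute the inhomogeneous term, that is, the $F$-polynomials of the auxiliary links $I_1,\dots,I_6$ produced by the crossing resolutions in Lemma~\ref{lem:4twist}. Each $I_i$ is a two-component link, so by the linking-number formula for $F$ recalled in Section~\ref{sec:J1} its $F$-polynomial factors as $(v^{-2}-1)\,v^{2\lk}$ times the product of the $F$-polynomials of its two components. Those components are themselves twist families of knots, and I would evaluate their $F$ using the two-strand formulas: Lemma~\ref{lem:2twist_one_up_one_down} for oppositely-oriented strands and Lemma~\ref{lem:2twist_all_up} for equally-oriented strands. This is the origin of every $v^{-2n}$-term, and crucially the explicit factor $n\,v^{2n}$ in Lemma~\ref{lem:2twist_all_up} is what produces coefficients that are linear in $n$. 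The overall prefactor $(v^{-2}-1)$ in the claimed formula is exactly the two-component linking-number prefactor, inherited by every contributing term; I would use this as a running sanity check. Collecting these pieces puts the inhomogeneous term into the form $X+Yv^{-2n}+Zv^{-4n}$ for explicit Laurent polynomials $X,Y,Z$ in $v$.

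With the recursion $F_{J_1^n}=X+Yv^{-2n}+Zv^{-4n}+v^{-4}F_{J_1^{n-1}}$ in hand, I would invoke Lemma~\ref{lem:ana1} to write the closed form for $F_{J_1^n}$ in terms of a base value $F_{J_1^{n_0}}$. Here the term $(n-n_0)Zv^{-4n}$ of Lemma~\ref{lem:ana1} accounts for the $n$-linear $v^{-4n}$-coefficients, while the term $Yv^{-2n}\tfrac{v^{-2(n-n_0)}-1}{v^{-2}-1}$ accounts for the constant-coefficient $v^{-2n}$-block and spreads into $v^{-4n}$. I would pin down the base case with a single direct computation of $F_{J_1^{n_0}}$ for one small $n_0$ (using the code of~\cite{data}), and finally expand, let the geometric series of Lemma~\ref{lem:ana1} telescope, and verify that the result collapses to the short list of monomials stated in the lemma.

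The hard part will not be the algebra, which is pure bookkeeping once the inputs are fixed, but the geometric identification: correctly reading the strand count and orientations of the twist region of $J_1^n$ from the figure, and then recognizing each auxiliary link $I_i$ together with the linking numbers of its two components. An error at that stage propagates through the entire computation, so before declaring the formula proved I would cross-check the closed form against the directly computed values of $F_{J_1^n}$ for two or three small $n$.
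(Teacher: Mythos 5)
Your overall recursion-plus-Lemma~\ref{lem:ana1} skeleton matches the paper's computation, but there is a genuine gap at the geometric step, and it is exactly the step you flag as ``the hard part''. You propose to apply Lemma~\ref{lem:4twist} directly to the family $J_1^n$. That lemma (like Lemma~\ref{lem:5twist}) is stated and proved for families of \emph{knots}: its proof uses that every crossing in the twist region is a self-crossing, so that each application of the skein relation for $F$ is in the case $\delta=0$ and contributes a resolution term. But $J_1^n$ is a $2$-component link whose twist region is threaded by \emph{five} strands, four belonging to the component $A^1_n$ and one to the unknotted component $B^1$ (see Figure~\ref{fig:J1A1}). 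So Lemma~\ref{lem:4twist} does not apply (wrong strand count), and neither does Lemma~\ref{lem:5twist}: the four crossing-pairs involving $B^1$ are inter-component crossings, where the skein relation is in the case $\delta=1$ and produces \emph{no} resolution term, so the list of terms and coefficients in the knot formula is wrong in this setting. Relatedly, your assertion that each auxiliary link produced by the resolutions is a $2$-component link cannot hold in your setup: resolving a self-crossing of $A^1_n$ \emph{inside} the link $J_1^n$ splits that component in two and yields a $3$-component link. Your heuristic that ``the $v^{-4n}$-terms force a $v^{-4}$-recursion, hence Lemma~\ref{lem:4twist}'' happens to pick the right recursion coefficient, but it does not produce a valid skein tree.

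The missing idea --- which you actually brush against when you observe that the prefactor $(v^{-2}-1)$ is ``exactly the two-component linking-number prefactor'' --- is to make that observation the \emph{first} step rather than a running sanity check. The paper notes that $J_1^n=B^1\cup A^1_n$ with $B^1$ an unknot and $\lk(A^1_n,B^1)=0$, so the linking number formula gives $F_{J_1^n}=(v^{-2}-1)F_{A^1_n}$ outright; this removes $B^1$ from the picture, and what remains is an honest knot family $A^1_n$ with four strands through its twist region, to which Lemma~\ref{lem:4twist} legitimately applies. From there your plan (compute the $F_{I_i}$ via the linking number formula together with the two- and three-strand lemmas, assemble the inhomogeneous term $X+Yv^{-2n}+Zv^{-4n}$, apply Lemma~\ref{lem:ana1} with a sage-computed base case $F_{A^1_0}$, and cross-check small values of $n$) is exactly the paper's computation. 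Alternatively, one could push your direct route through by deriving a new mixed five-strand recursion for two-component links, in which only the six $A$--$A$ resolutions survive and the auxiliary links have three components; every term would then carry the common factor $(v^{-2}-1)$, which is just the above factorization in disguise --- but that is a new lemma you would have to state and prove, not Lemma~\ref{lem:4twist}.
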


\begin{figure}[t] 
	\centering
	\def\svgwidth{0,65\columnwidth}
	%% Creator: Inkscape 1.2.2 (b0a8486541, 2022-12-01), www.inkscape.org
%% PDF/EPS/PS + LaTeX output extension by Johan Engelen, 2010
%% Accompanies image file '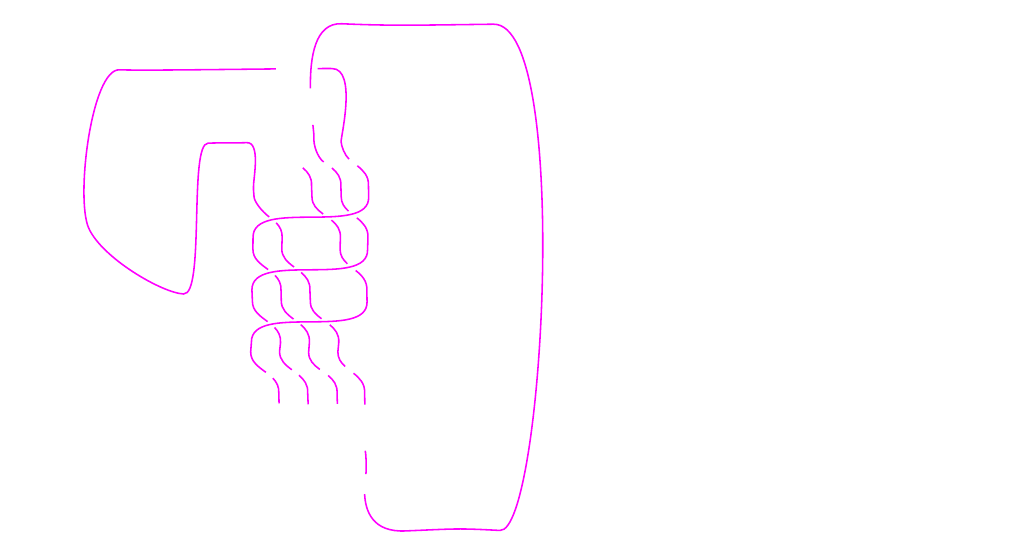' (pdf, eps, ps)
%%
%% To include the image in your LaTeX document, write
%%   \input{<filename>.pdf_tex}
%%  instead of
%%   \includegraphics{<filename>.pdf}
%% To scale the image, write
%%   \def\svgwidth{<desired width>}
%%   \input{<filename>.pdf_tex}
%%  instead of
%%   \includegraphics[width=<desired width>]{<filename>.pdf}
%%
%% Images with a different path to the parent latex file can
%% be accessed with the `import' package (which may need to be
%% installed) using
%%   \usepackage{import}
%% in the preamble, and then including the image with
%%   \import{<path to file>}{<filename>.pdf_tex}
%% Alternatively, one can specify
%%   \graphicspath{{<path to file>/}}
%% 
%% For more information, please see info/svg-inkscape on CTAN:
%%   http://tug.ctan.org/tex-archive/info/svg-inkscape
%%
\begingroup%
  \makeatletter%
  \providecommand\color[2][]{%
    \errmessage{(Inkscape) Color is used for the text in Inkscape, but the package 'color.sty' is not loaded}%
    \renewcommand\color[2][]{}%
  }%
  \providecommand\transparent[1]{%
    \errmessage{(Inkscape) Transparency is used (non-zero) for the text in Inkscape, but the package 'transparent.sty' is not loaded}%
    \renewcommand\transparent[1]{}%
  }%
  \providecommand\rotatebox[2]{#2}%
  \newcommand*\fsize{\dimexpr\f@size pt\relax}%
  \newcommand*\lineheight[1]{\fontsize{\fsize}{#1\fsize}\selectfont}%
  \ifx\svgwidth\undefined%
    \setlength{\unitlength}{484.03889336bp}%
    \ifx\svgscale\undefined%
      \relax%
    \else%
      \setlength{\unitlength}{\unitlength * \real{\svgscale}}%
    \fi%
  \else%
    \setlength{\unitlength}{\svgwidth}%
  \fi%
  \global\let\svgwidth\undefined%
  \global\let\svgscale\undefined%
  \makeatother%
  \begin{picture}(1,0.55164843)%
    \lineheight{1}%
    \setlength\tabcolsep{0pt}%
    \put(0,0){\includegraphics[width=\unitlength,page=1]{J1A1.pdf}}%
    \put(0.27369468,0.11931621){\color[rgb]{0,0,0}\makebox(0,0)[lt]{\lineheight{0.1}\smash{\begin{tabular}[t]{l}$n-1$\end{tabular}}}}%
    \put(-0.00076234,0.26031712){\color[rgb]{0,0,0}\makebox(0,0)[lt]{\lineheight{0.1}\smash{\begin{tabular}[t]{l}$J_{1}^n$\end{tabular}}}}%
    \put(0,0){\includegraphics[width=\unitlength,page=2]{J1A1.pdf}}%
    \put(0.77593807,0.24099033){\color[rgb]{1,0,1}\makebox(0,0)[lt]{\lineheight{0.1}\smash{\begin{tabular}[t]{l}$n$\end{tabular}}}}%
    \put(0.78876571,0.07749528){\color[rgb]{1,0,1}\makebox(0,0)[lt]{\lineheight{0.1}\smash{\begin{tabular}[t]{l}$A^{1}_n$\end{tabular}}}}%
    \put(0.40172446,0.48559141){\color[rgb]{1,0,1}\makebox(0,0)[lt]{\lineheight{0.1}\smash{\begin{tabular}[t]{l}$A^{1}_n$\end{tabular}}}}%
    \put(0.13623708,0.52298673){\color[rgb]{1,0.58039216,0}\makebox(0,0)[lt]{\lineheight{0.1}\smash{\begin{tabular}[t]{l}$B^{1}$\end{tabular}}}}%
    \put(0,0){\includegraphics[width=\unitlength,page=3]{J1A1.pdf}}%
  \end{picture}%
\endgroup%

	\caption{Left: the $2$-component link $J_1^n$, consisting of the two components $B^1$ and $A^1_n$. Right: A diagram of $A^1_n$.}
	\label{fig:J1A1}
\end{figure}

\begin{proof}
	In Figure~\ref{fig:J1A1} we see that $J_1^n$ consists of two components: the component $B^1$ which is an unknot and the component $A^1_n$ shown in Figure~\ref{fig:J1A1}. Their linking number is $\lk(A^1_n,B^1)=0$, and thus the linking number formula implies that $F_{J_1^n}=(v^{-2}-1)F_{A^1_n}$. Therefore, the claim follows from the following lemma. 
\end{proof}

\begin{lem}
	The family of knots $A_1^n$ shown in Figure~\ref{fig:J1A1} have 
	{\scriptsize
		\begin{align*}
			F_{A_1^n}=&\,\ 1 + v^{-2n+2} -v^{-2n} -v^{-2n-2} + v^{-2n-4} -(n-2) v^{-4n+6} +(n-3)v^{-4n+4} \\&+(6n-2) v^{-4n+2} -(14n-4) v^{-4n} +11nv^{-4n-2} -(3n+1)v^{-4n-4}  .
	\end{align*}}
\end{lem}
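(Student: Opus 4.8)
The plan is to recognize $A_1^n$ as a member of a twist family — the $n$-labeled box in Figure~\ref{fig:J1A1} is precisely an $n$-full-twist region — and to run the same inductive scheme used for $K_n$ and $K'_n$, but one level down. Inspecting the orientations and the number of strands passing through that box tells us which of the strand-twist lemmas to apply; the shape of the target answer, in which a term $v^{-4}F_{A_1^{n-1}}$ is needed to reproduce the $v^{-4n}$-block with coefficients linear in $n$, points to the four-strand formula, Lemma~\ref{lem:4twist}. Applying it expresses $F_{A_1^n}$ in terms of $F_{A_1^{n-1}}$ together with the six two-component auxiliary links $I_1,\dots,I_6$ obtained by resolving the marked crossings.

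The next step is to evaluate each $F_{I_i}$. Since every $I_i$ has two components, the linking-number formula from the appendix reduces $F_{I_i}$ to the product $v^{2\lk}F_{\,\cdot\,}F_{\,\cdot\,}$ of the $F$-polynomials of its two components, times the $(v^{-2}-1)$ factor. Each component is again either an unknot or a simpler twist family, so its $F$-polynomial is computed by the two- and three-strand formulas, Lemmas~\ref{lem:2twist_one_up_one_down}, \ref{lem:2twist_all_up}, and~\ref{lem:3twist}. Here one must be careful: the two-strand reduction has two versions, depending on whether the twisting strands are co-oriented (Lemma~\ref{lem:2twist_all_up}) or anti-oriented (Lemma~\ref{lem:2twist_one_up_one_down}), and picking the correct one from the diagram is exactly where the bookkeeping is delicate. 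When a three-strand reduction produces an $n$-dependent residual link $L_{n-1}$, the remark after Lemma~\ref{lem:3twist} lets us collapse it with the linking-number formula as well. Collecting all these contributions and substituting back into Lemma~\ref{lem:4twist} puts the recursion into the normal form
\[
F_{A_1^n}=X+Yv^{-2n}+Zv^{-4n}+v^{-4}F_{A_1^{n-1}},
\]
with $X=1-v^{-4}$ (so that the geometric sum contributes the constant $1$) and with explicit Laurent polynomials $Y$ and $Z$.

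Finally, I would feed this recursion into Lemma~\ref{lem:ana1}. Its closed form splits exactly into the three visible blocks of the claimed answer: the constant $X/(1-v^{-4})=1$, the geometric $v^{-2n}$-block coming from $Yv^{-2n}\tfrac{v^{-2(n-n_0)}-1}{v^{-2}-1}$, and the linear-in-$n$ $v^{-4n}$-block coming from $(n-n_0)Zv^{-4n}$ together with the $-Xv^{-4(n-n_0)}/(1-v^{-4})$ and $v^{-4(n-n_0-1)}F_{A_1^{n_0}}$ tails. Pinning down a single base case $F_{A_1^{n_0}}$ for small $n_0$ by a direct computation, as elsewhere in the appendix via sage~\cite{data}, and then simplifying yields the stated formula. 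I expect the main obstacle to be neither the recursion nor Lemma~\ref{lem:ana1}, but the correct identification and $F$-evaluation of all the intermediate links $I_i$ and their components: this is a nested cascade of twist inductions whose orientation-sensitive case distinctions and Laurent-polynomial arithmetic must be tracked precisely so that the coefficients of $Y$ and $Z$ come out right.
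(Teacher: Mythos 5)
Your proposal matches the paper's proof essentially step for step: apply Lemma~\ref{lem:4twist} to get the recursion in terms of $F_{A_1^{n-1}}$ and the six links $I_1,\dots,I_6$, evaluate each $F_{I_i}$ via the linking-number formula together with the two- and three-strand lemmas (this is the content of Lemmas~\ref{lem:I1}--\ref{lem:I6}), obtain the normal form $F_{A_1^n}=X+Yv^{-2n}+Zv^{-4n}+v^{-4}F_{A_1^{n-1}}$ with $X=1-v^{-4}$, and close the induction with Lemma~\ref{lem:ana1} and a sage computation of the base case (the paper uses $n_0=0$, computing $F_{A^1_0}=2v^6-3v^4-v^2+4-v^{-2}$). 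The approach and all key ingredients are the same, so the proposal is correct.
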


\begin{proof}
	First, we use sage to compute 
	\begin{align*}
		F_{A^1_0}=\,&2v^6-3v^4-v^2+4-v^{-2}.
	\end{align*}	
	Then we can apply Lemma~\ref{lem:4twist} to the family of knots $A_1^n$ to see that
		\begin{align*}
		F_{A^1_n}=\,&-F_{I_1} + F_{I_2} -F_{I_3} +F_{I_4}  -F_{I_5}  - v^{-2} F_{I_{6}} + v^{-4} F_{A^1_{n-1}}.
	\end{align*}	
In Lemmas~\ref{lem:I1}--\ref{lem:I6} we compute $F_{I_i}$. If we plug in the values for $F_{I_i}$ we get 
\begin{align*}
	F_{A^1_n}=\,&X+v^{-2n}Y+v^{-4n}Z + v^{-4} F_{A^1_{n-1}},
\end{align*}	
with polynomials	
	\begin{align*}
		X&=1-v^{-4}\\
		Y&=v^2-2+2v^{-4}-v^{-6}\\
		Z&=-v^6+v^4+6v^2-14+11v^{-2}-3v^{-4}.
\end{align*}
Thus Lemma~\ref{lem:ana1} implies that 
	\begin{align*}
		F_{A^1_n}=1-v^{-4n}+Yv^{-2n}\frac{1-v^{-2n}}{1-v^{-2}}+nZv^{-4n}+v^{-4n}F_{A^1_0}.
\end{align*}
Plugging in the above values yields the claimed formula for $F_{A^1_n}$.	
\end{proof}

\begin{lem} \label{lem:I1}
	Let $I^n_1$ be the family of knots shown in Figure~\ref{fig:Ii} then we have 
		\begin{align*}
			F_{I^n_1}=(v^{-2}-1)v^{-2n-2}\,\big( v^2-1+v^{-2}  \big).
	\end{align*}
\end{lem}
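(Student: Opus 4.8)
The plan is to exploit that $I^n_1$ is a $2$-component link, together with the linking number formula recalled above, which says that $F$ of a $2$-component link depends only on the linking number of its two components and on the $F$ polynomials of those components. Writing $I^n_1=P\cup Q$ for its two components, the formula gives
\[
F_{I^n_1}=(v^{-2}-1)\,v^{2\lk(P,Q)}\,F_P\,F_Q,
\]
so the entire computation reduces to (i) identifying the knot types of $P$ and $Q$, and (ii) computing the linking number $\lk(P,Q)$. No further skein recursion is needed.

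First I would read off the two components from Figure~\ref{fig:Ii}. The crossing resolved to form $I_1$ merges two of the twisting strands into a single arc, so the $n$-full-twist region no longer twists a component with itself but instead twists one strand of $P$ around one strand of $Q$. In particular, the twist region links the two components rather than knotting either of them, and hence both $P$ and $Q$ are knots \emph{independent} of $n$. I expect one of them, say $P$, to be an unknot, so that $F_P=1$, while the other component $Q$ is a fixed small knot whose $F$ polynomial I would compute directly, for instance with sage exactly as $F_{A^1_0}$ was computed, or by recognizing $Q$ (the target answer forces $F_Q=v^2-1+v^{-2}$, which is the $F$ polynomial of the figure-eight knot).

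Next I would compute the linking number. Each of the $n$ full twists contributes $-1$ to the linking number of the two strands running through the twist region, and together with the fixed clasp outside the twist box (also contributing $-1$) this gives $\lk(P,Q)=-(n+1)$, hence $v^{2\lk(P,Q)}=v^{-2n-2}$. Substituting $F_P=1$, $F_Q=v^2-1+v^{-2}$, and $v^{2\lk(P,Q)}=v^{-2n-2}$ into the linking number formula then yields
\[
F_{I^n_1}=(v^{-2}-1)\,v^{-2n-2}\,\big(v^2-1+v^{-2}\big),
\]
as claimed.

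The main obstacle is entirely the bookkeeping of Figure~\ref{fig:Ii}: correctly identifying which two strands are merged by the resolution, confirming that the twist region links the two resulting components rather than knotting either of them (so that the component knot types are $n$-independent), and pinning down both the sign and the magnitude of the linking number, including the orientations and the contribution of the clasp. Once the diagram is read correctly, the linking number formula does all of the analytic work.
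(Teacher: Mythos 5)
Your proposal is correct and follows essentially the same route as the paper: the paper likewise observes that $I^n_1$ has two components, an unknot and a figure-eight knot with linking number $-n-1$, and concludes by the linking number formula. (Your value $F_Q=v^2-1+v^{-2}$ is the correct figure-eight polynomial, consistent with the lemma's statement; the paper's proof text contains a sign typo, writing $v^2-1-v^{-2}$.)
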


\begin{proof}
	In Figure~\ref{fig:Ii} we see that $I_1$ consists of two knots $X^1$ and $Y^1$, that have linking number $\lk(X^1,Y^1)=-n-1$. It is also straightforward to check that $X^1$ is an unknot and $Y^1$ is the figure eight knot with $F_{Y^1}=v^2-1-v^{-2}$. Thus the linking number formula for $F$ implies the result.
\end{proof}

\begin{lem}
	Let $I^n_2$ be the family of knots shown in Figure~\ref{fig:Ii} then we have 
		\begin{align*}
			F_{I^n_2}=(v^{-2}-1)v^{2-2n}\,\big( 1+v^{-2n}(2v^2-6+6v^{-2}-2v^{-4})\big).
	\end{align*}
\end{lem}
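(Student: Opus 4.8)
The plan is to mirror the computation of $F_{I_1^n}$ in Lemma~\ref{lem:I1}: decompose $I_2^n$ into its two components using Figure~\ref{fig:Ii}, pin down their linking number and their individual $F$-polynomials, and then assemble the answer via the linking number formula for $F$. The new feature, compared to $I_1^n$, is that one of the two components is no longer a fixed knot but a twist family whose $F$-polynomial itself depends on $n$; this is the source of the extra factor $v^{-2n}$ appearing inside the parenthesis of the claimed formula, which was absent in the $I_1^n$ case.

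First I would read off from Figure~\ref{fig:Ii} that $I_2^n$ is a $2$-component link $X^2\cup Y^2_n$, where $X^2$ is an unknot (so $F_{X^2}=1$) and $Y^2_n$ is the component carrying the residual full twists. Next I would compute the linking number $\lk(X^2,Y^2_n)$; reading the crossings in the twist region, this should come out to $1-n$, which accounts for the global prefactor $v^{2\lk(X^2,Y^2_n)}=v^{2-2n}$. Since there are two components, the linking number formula then gives
\begin{align*}
F_{I_2^n}=(v^{-2}-1)\,v^{2-2n}\,F_{X^2}\,F_{Y^2_n}=(v^{-2}-1)\,v^{2-2n}\,F_{Y^2_n},
\end{align*}
so the problem reduces entirely to determining $F_{Y^2_n}$.

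To compute $F_{Y^2_n}$ I would recognize $Y^2_n$ as one of the two-strand twist families governed by Lemma~\ref{lem:2twist_one_up_one_down} (the two twisting strands being oppositely oriented, which produces the $v^{-2n}$ dependence rather than the $v^{2n}$ dependence of Lemma~\ref{lem:2twist_all_up}). Applying that lemma requires two inputs: the base case $F_{Y^2_0}$, which I would compute directly with sage (or by identifying the underlying knot), and the $F$-polynomial of the $2$-component resolution link, which is again handled by the linking number formula. Summing the resulting geometric series---either directly or through Lemma~\ref{lem:ana1}---should yield $F_{Y^2_n}=1+v^{-2n}(2v^2-6+6v^{-2}-2v^{-4})$, and substituting this back gives the claimed closed form.

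The main obstacle is diagrammatic rather than algebraic: correctly extracting the two components of $I_2^n$ from Figure~\ref{fig:Ii} together with their orientations and the exact linking number, and correctly identifying the base knot $Y^2_0$. Once the components, orientations, and the single base-case polynomial are nailed down, the remainder is the same routine skein-relation induction and geometric-series bookkeeping already used for $I_1^n$ and throughout Lemmas~\ref{lem:2twist_one_up_one_down}--\ref{lem:4twist}.
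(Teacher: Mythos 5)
Your overall scaffolding agrees with the paper's proof: both decompose $I^n_2$ into an unknot plus a knotted component carrying the residual twists, use the linking number $1-n$ and the linking number formula to pull out the prefactor $(v^{-2}-1)v^{2-2n}$, compute a base case with sage, and close an induction (the base value $F=2v^2-5+6v^{-2}-2v^{-4}$ is indeed what the paper finds, and it reassembles to your claimed parenthetical term). However, there is a concrete misstep at the heart of your plan: you propose to compute the $F$-polynomial of the twisted component by recognizing it as one of the two-strand twist families governed by Lemma~\ref{lem:2twist_one_up_one_down}. According to Figure~\ref{fig:Ii} and the paper's proof, the knotted component $X^2_n$ passes through the residual twist box \emph{three} times, not twice; it is a three-strand twist family in the sense of Lemma~\ref{lem:3twist}. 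The two-strand lemma does not apply to this diagram, so your argument stalls exactly at the step where the real work happens.

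What the paper does instead is apply Lemma~\ref{lem:3twist}, which requires computing the $F$-polynomials of three auxiliary resolution links: $L_{n-1}$ (a $2$-component link of two unknots with vanishing linking number, whence $F_{L_{n-1}}=v^{-2}-1$ independently of $n$), $U$, and $V$. Plugging these values into the three-strand recursion happens to collapse it to $F_{X^2_n}=-(v^{-2}-1)+v^{-2}F_{X^2_{n-1}}$, which is formally identical to the recursion your two-strand lemma would produce \emph{if} the resolution link were an unlink --- but this coincidence is an output of the three-strand computation ($F_U$ and $F_V$ conspire to cancel), not something you may assume in advance. So while your linking-number reduction, base-case computation, and geometric-series bookkeeping via Lemma~\ref{lem:ana1} are all sound, the proposal as written rests on a misidentification of the twist structure and cannot certify the claimed formula without being rerouted through Lemma~\ref{lem:3twist} and the computation of the auxiliary links $L_{n-1}$, $U$, $V$.
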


\begin{proof}
	From Figure~\ref{fig:Ii} we deduce that $I^n_2$ consist of an unknot $Y^2$ and a knot $X^2_n$ that have linking number $\lk(X^2_n,Y^2)=1-n$. Thus $F_{I^n_2}=(v^{-2}-1)v^{2-2n}F_{X^2_n}$. The $F$ polynomial of $X_n^2$ can be computed using Lemma~\ref{lem:3twist}. In the notation of that lemma, we see that $L_{n-1}$ is a $2$-component link consisting of two unknots that have vanishing linking numbers and thus, independent of $n$, we have $F_{L_{n-1}}=v^{-2}-1$. Moreover, we compute 
	\begin{align*}
		F_U&=(v^{-2}-1)v^2(v^{-2}+v^{-4}-v^{-6})\\
		F_V&=(v^{-2}-1)v^{-2}.
	\end{align*}
Thus Lemma~\ref{lem:3twist} implies that 
\begin{align*}
	F_{X^2_n}=-(v^{-2}-1)+v^{-2}F_{X_{n-1}^2}
\end{align*}
and induction yields
\begin{align*}
	F_{X^2_n}=1-v^{-2n}+v^{-2n}F_{X_0^2}.
\end{align*}
By computing $F_{X^2_0}=2v^2-5+6v^{-2}-2v^{-4}$ and plugging everything into the formula for $F_{I_2}$ we get the claimed result.
\end{proof}

\begin{lem}
	Let $I^n_3$ be the family of knots shown in Figure~\ref{fig:Ii} then we have 
		\begin{align*}
			F_{I^n_3}=(v^{-2}-1)\,\big( 1+v^{-2n}(v^2-1-v^{-2}+v^{-4})  \big).
	\end{align*}
\end{lem}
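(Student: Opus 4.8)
The plan is to mirror, almost verbatim, the computations already carried out for $I^n_1$ and $I^n_2$. First I would read off from Figure~\ref{fig:Ii} that $I^n_3$ is a $2$-component link, and write its components as $X^3_n$ and $Y^3$. The shape of the claimed answer dictates the two geometric facts I must extract from the figure: that $Y^3$ is an unknot (so $F_{Y^3}=1$) and that $\lk(X^3_n,Y^3)=0$ (so that no extra power of $v$ appears in front, in contrast to the $v^{-2n-2}$ and $v^{2-2n}$ factors seen for $I^n_1$ and $I^n_2$). Granting these, the linking number formula for $F$ collapses to
\[
F_{I^n_3}=(v^{-2}-1)\,F_{X^3_n},
\]
and the whole problem reduces to proving $F_{X^3_n}=1+v^{-2n}(v^2-1-v^{-2}+v^{-4})$.

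Next I would compute $F_{X^3_n}$ by induction on $n$. The knots $X^3_n$ differ from $X^3_{n-1}$ by twisting three strands, so I would apply Lemma~\ref{lem:3twist}. As in the proof for $I^n_2$, I expect the auxiliary links $L_{n-1}$, $U$, and $V$ of that lemma to be split or otherwise standard $2$-component links whose $F$ is obtained directly from the linking number formula together with Lemmas~\ref{lem:2twist_all_up} and~\ref{lem:2twist_one_up_one_down}. I anticipate that the non-recursive terms cancel to the same clean one-step recursion that appeared for $X^2_n$, namely
\[
F_{X^3_n}=-(v^{-2}-1)+v^{-2}F_{X^3_{n-1}},
\]
which solves by induction (or by Lemma~\ref{lem:ana1}) to $F_{X^3_n}=1-v^{-2n}+v^{-2n}F_{X^3_0}$.

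Finally I would pin down the base case. Computing $F_{X^3_0}$ directly with sage~\cite{data} and matching against the target, I expect to find $F_{X^3_0}=v^2-v^{-2}+v^{-4}$; substituting this into $1-v^{-2n}+v^{-2n}F_{X^3_0}$ gives exactly $1+v^{-2n}(v^2-1-v^{-2}+v^{-4})$, and multiplying by $(v^{-2}-1)$ yields the claimed formula for $F_{I^n_3}$. (As a sanity check, $F_{X^3_0}(1)=1$, consistent with $X^3_0$ being a knot, and the recursion preserves the value $1$ at $v=1$.)

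The main obstacle here is not any individual calculation but getting the geometric input right: correctly identifying the two strands that form the components $X^3_n$ and $Y^3$ in Figure~\ref{fig:Ii}, confirming that $Y^3$ is unknotted and that $\lk(X^3_n,Y^3)=0$, and verifying that the three links produced by Lemma~\ref{lem:3twist} are simple enough to make the recursion collapse to a single term. Should $X^3_n$ in fact differ by a different number of strands, one of Lemmas~\ref{lem:2twist_one_up_one_down}, \ref{lem:2twist_all_up}, or \ref{lem:4twist} would replace Lemma~\ref{lem:3twist}, but the overall strategy — reduce to a single component via the linking number formula, run the relevant twist recursion, and seed it with a sage computation of the $n=0$ case — would be unchanged.
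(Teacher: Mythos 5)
Your proposal matches the paper's proof in all essentials: the paper likewise reduces to $F_{I^n_3}=(v^{-2}-1)F_{X^3_n}$ using that $Y^3$ is an unknot with vanishing linking number, runs a one-step twist recursion, and seeds it with the computation $F_{X^3_0}=v^2-v^{-2}+v^{-4}$. The only discrepancy is which twist lemma applies: in Figure~\ref{fig:Ii} the component $X^3_n$ twists along \emph{two} oppositely oriented strands, so the paper invokes Lemma~\ref{lem:2twist_one_up_one_down} (with $Z$ a $2$-component unlink, $F_Z=v^{-2}-1$) rather than Lemma~\ref{lem:3twist} --- exactly the fallback you anticipated, and it produces the identical recursion $F_{X^3_n}=-(v^{-2}-1)+v^{-2}F_{X^3_{n-1}}$ and hence the claimed formula.
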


\begin{proof}
	In Figure~\ref{fig:Ii} we observe that $I_3$ consists of an unknot $Y^3$ that has vanishing linking number with the other component $X^3_n$ and thus $F_{I_3^n}=(v^{-2}-1)F_{X_n^3}$. We compute $F_{X^3_n}$ with the help of Lemma~\ref{lem:2twist_one_up_one_down}. In the notation of that lemma, we observe that $Z$ is a $2$-component unlink with $F_Z=v^{-2}-1$ and we compute $F_{X_0^3}=v^2-v^{-2}+v^{-4}$. Putting all this together yields the claimed result.
\end{proof}

\begin{lem}
	Let $I^n_4$ be the family of knots shown in Figure~\ref{fig:Ii} then we have 
		\begin{align*}
			F_{I^n_4}=(v^{-2}-1)v^{2-2n}\,\big( (v^{2-2n}-1)(-v^2+v^{-4}-v^{-6})+v^{2-2n}(2v^2-2+v^{-4})  \big).
	\end{align*}
\end{lem}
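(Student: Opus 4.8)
The plan is to repeat the two-step strategy that handled $I_1^n$, $I_2^n$, and $I_3^n$: use the linking number formula to peel off the trivial component, and then resolve the twist region of the remaining knot inductively.

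First I would read off from Figure~\ref{fig:Ii} that $I_4^n$ is a $2$-component link whose components are an unknot $Y^4$ and a knot $X_n^4$ carrying all of the twisting. Counting the crossings between the two components gives $\lk(X_n^4,Y^4)=1-n$, so the linking number formula for $F_L$ reduces the problem to
\begin{align*}
	F_{I_4^n}=(v^{-2}-1)\,v^{2-2n}\,F_{X_n^4},
\end{align*}
and it then remains only to show that $F_{X_n^4}=(v^{2-2n}-1)(-v^2+v^{-4}-v^{-6})+v^{2-2n}(2v^2-2+v^{-4})$.

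To compute $F_{X_n^4}$ I would apply the relevant inductive twist formula to the twist region of $X_n^4$; given the number of terms in the target expression, the likely tool is Lemma~\ref{lem:3twist} (with Lemmas~\ref{lem:2twist_one_up_one_down} and~\ref{lem:2twist_all_up} used to evaluate its auxiliary links). As in the computations for $I_2^n$ and $I_3^n$, those auxiliary links are $2$-component links built from unknots, whose $F$ polynomials I would evaluate directly by the linking number formula together with base cases computed in sage. Since only a single power $v^{-2n}$ occurs in the target, with no $v^{-4n}$ term present, the recursion has the shape $F_{X_n^4}=C+v^{-2}F_{X_{n-1}^4}$ with an $n$-independent polynomial $C$; solving it by the usual geometric-series induction (equivalently Lemma~\ref{lem:ana1}) and inserting the sage value of the base case $F_{X_0^4}$ then yields the displayed closed form. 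Substituting this back into the reduction above finishes the proof.

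The only real obstacle is bookkeeping rather than any conceptual difficulty: one must correctly identify $X_n^4$, $Y^4$, and each auxiliary link from the diagram, and then track the signs and powers of $v$ so that the telescoping sum collapses into the compact two-term expression $(v^{2-2n}-1)(-v^2+v^{-4}-v^{-6})+v^{2-2n}(2v^2-2+v^{-4})$. Nothing here goes beyond what already appeared in the earlier $I_i^n$ computations, so the entire argument is mechanical once the figure has been parsed.
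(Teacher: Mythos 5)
Your proposal follows the paper's proof essentially verbatim: the same linking-number reduction $F_{I_4^n}=(v^{-2}-1)v^{2-2n}F_{X_n^4}$ with $\lk(X_n^4,Y^4)=1-n$, the same use of Lemma~\ref{lem:3twist} to obtain a recursion of the form $F_{X_n^4}=C+v^{-2}F_{X_{n-1}^4}$ with $n$-independent $C$ (the auxiliary links $L_{n-1}$, $U$, $V$ turn out to be $n$-independent here, exactly as you predicted from the absence of $v^{-4n}$ terms), and the same geometric-series induction anchored at the sage-computed base case $F_{X_0^4}=2v^2-2+v^{-4}$. The only inessential discrepancy is your remark that the auxiliary links consist of unknots; in the paper one component of $U$ is a nontrivial knot with $F_U=(v^{-2}-1)v^{-4}(v^2-v^{-2}+v^{-4})$, but this does not affect the method.
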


\begin{proof}
	In Figure~\ref{fig:Ii} we observe that $I^n_4$ consist of an unknot $Y^4$ and a knot $X^4_n$ that have linking number $\lk(X^4_n,Y^4)=1-n$. Thus $F_{I^n_4}=(v^{-2}-1)v^{2-2n}F_{X^4_n}$. The $F$ polynomial of $X_n^4$ can be computed using Lemma~\ref{lem:3twist}. In the notation of that lemma, we see that $L_{n-1}$ is a $2$-component link consisting of two unknots that have linking number $\lk=-1$ and thus independent of $n$, we have $F_{L_{n-1}}=(v^{-2}-1)v^{-2}$. Moreover, we compute 
	\begin{align*}
		F_U&=(v^{-2}-1)v^{-4}(v^{2}-v^{-2}+v^{-4})\\
		F_V&=(v^{-2}-1)v^{2}.
	\end{align*}
	Thus Lemma~\ref{lem:3twist} implies that 
	\begin{align*}
		F_{X^4_n}=(v^{-2}-1)(-v^2+v^{-4}-v^{-6})+v^{-2}F_{X_{n-1}^4}
	\end{align*}
	and induction yields
	\begin{align*}
		F_{X^4_n}=(v^{-2n}-1)(-v^2+v^{-4}-v^{-6})+v^{-2n}F_{X_0^4}.
	\end{align*}
	By computing $F_{X^4_0}=2v^2-2+v^{-4}$ and plugging everything into the formula for $F_{I_4}$ we get the claimed result.
\end{proof}

\begin{lem}
	Let $I^n_5$ be the family of knots shown in Figure~\ref{fig:Ii} then we have 
		\begin{align*}
			F_{I^n_5}=(v^{-2}-1)v^{6-2n}\,\big( v^{-2}+v^{-4}-v^{-6}  \big).
	\end{align*}
\end{lem}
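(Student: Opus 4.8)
The target formula has exactly the same shape as the one in Lemma~\ref{lem:I1}: a linking-number prefactor $(v^{-2}-1)v^{6-2n}$ multiplying a Laurent polynomial that carries no dependence on $n$. This is the signature of the ``easy'' case, in which $I^n_5$ is a two-component link whose nontrivial component is a \emph{fixed} knot; consequently no inductive argument (of the kind used for $I^n_2$ and $I^n_4$ via Lemma~\ref{lem:3twist}) is needed here. The plan is therefore to mimic the proof of Lemma~\ref{lem:I1} verbatim, reducing everything to the two-component linking number formula.

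First I would read off from Figure~\ref{fig:Ii} that $I^n_5$ is a two-component link, one component being an unknot (contributing $F=1$) and the other a knot $X^5$ whose knot type is independent of $n$; the parameter $n$ only governs how the two components wind around one another in the twist region. Next I would compute the linking number directly from the diagram, obtaining $\lk(X^5,\text{unknot})=3-n$, so that the linking-number exponent contributes the factor $v^{2(3-n)}=v^{6-2n}$.

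Then I would pin down the $F$ polynomial of the fixed component, $F_{X^5}=v^{-2}+v^{-4}-v^{-6}$, either by recognizing $X^5$ as a specific small knot or by a single sage computation in the style of the neighbouring lemmas. Finally, substituting into the two-component linking number formula $F_{L}=(v^{-2}-1)\,v^{2\lk(X,Y)}F_{X}F_{Y}$ yields
\begin{align*}
F_{I^n_5}=(v^{-2}-1)\,v^{6-2n}\,\big(v^{-2}+v^{-4}-v^{-6}\big),
\end{align*}
as claimed.

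The main obstacle is entirely diagrammatic rather than computational: correctly isolating the two components from the picture, verifying that $X^5$ really is $n$-independent, and reading off the signed linking number $3-n$ from the twist region. Once these are settled the conclusion is immediate, since $F_{X^5}$ contributes no power of $v^{-2n}$; this places $I^n_5$ alongside the fixed-knot cases $I^n_1$ and $I^n_3$ rather than the genuinely inductive families $I^n_2$ and $I^n_4$.
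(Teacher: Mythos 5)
Your proposal is correct and follows exactly the paper's own argument: the paper likewise observes that $I^n_5$ has components $X^5$ and $Y^5$ with $\lk(X^5,Y^5)=3-n$, that $Y^5$ is an unknot and $F_{X^5}=v^{-2}+v^{-4}-v^{-6}$ is independent of $n$, and then concludes by the linking number formula. Your classification of this as a ``fixed-knot'' case like $I^n_1$, requiring no induction, matches the structure of the paper's proofs precisely.
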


\begin{proof}
	The components $X^5$ and $Y^5$ of the link $I_5^n$ have linking number $\lk(X^5,Y^5)=3-n$. $Y^5$ is an unknot, while $F_{X^5}=v^{-2}+v^{-4}-v^{-6}$ and thus the linking number formula for $F$ implies the lemma.
\end{proof}

\begin{lem} \label{lem:I6}
	Let $I^n_6$ be the family of knots shown in Figure~\ref{fig:Ii} then we have 
		\begin{align*}
			F_{I^n_6}=v^{-2}-1.
	\end{align*}
\end{lem}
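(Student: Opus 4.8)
The plan is to reuse, for the last time, the template already applied to $I^n_1,\dots,I^n_5$: read the two components of $I^n_6$ off its diagram in Figure~\ref{fig:Ii}, record their individual $F$ polynomials and their pairwise linking number, and then invoke the linking number formula for $F$. Specialized to a two-component link $X\cup Y$, that formula reads $F_{X\cup Y}(v)=(v^{-2}-1)\,v^{2\lk(X,Y)}\,F_X(v)F_Y(v)$.

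First I would identify the two components of $I^n_6$, say $X^6$ and $Y^6$, recalling from the proof of Lemma~\ref{lem:4twist} that $I_6$ is the two-component link obtained by changing the later-indexed crossings and resolving the crossing $c_6$. The shape of the target answer already tells me what to look for: since $v^{-2}-1$ is exactly the prefactor $(v^{-2}-1)^{2-1}$ with no further powers of $v$ and no dependence on $n$, both components must be unknots, so $F_{X^6}=F_{Y^6}=1$, and their linking number must vanish, $\lk(X^6,Y^6)=0$. The only substantive step is therefore diagrammatic: I would check in Figure~\ref{fig:Ii} that this resolution completely undoes the $n$-twist region, leaving two unknotted components whose crossings cancel algebraically.

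Granting these two diagram facts, the linking number formula yields $F_{I^n_6}=(v^{-2}-1)\,v^{2\lk(X^6,Y^6)}\,F_{X^6}F_{Y^6}=v^{-2}-1$, which is the claim. There is no real obstacle; this is the simplest of the six $I_i$ computations. What makes it go through so cleanly is that $F_L$ depends only on the component polynomials and the pairwise linking numbers, so I need not pin down the full isotopy type of $I^n_6$ — it suffices to see from the picture that both components are unknotted and that their linking number is zero, which in particular forces the answer to be independent of $n$.
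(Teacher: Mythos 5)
Your proposal is correct and matches the paper's argument in all essentials: the paper simply observes that $I^n_6$ is a $2$-component unlink and concludes $F_{I^n_6}=v^{-2}-1$, which is the same diagram-reading plus known-formula step you perform. The only (harmless) difference is that you verify the formally weaker facts needed by the linking number formula --- both components unknotted and $\lk=0$ --- rather than identifying the link as a genuine unlink, which is a perfectly adequate, and if anything more robust, way to phrase the same check.
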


\begin{proof}
	$I^n_6$ is a $2$-component unlink and thus $F_{I^n_6}=v^{-2}-1$.
\end{proof}

\begin{figure}[ht] 
	\centering
	\def\svgwidth{0,75\columnwidth}
	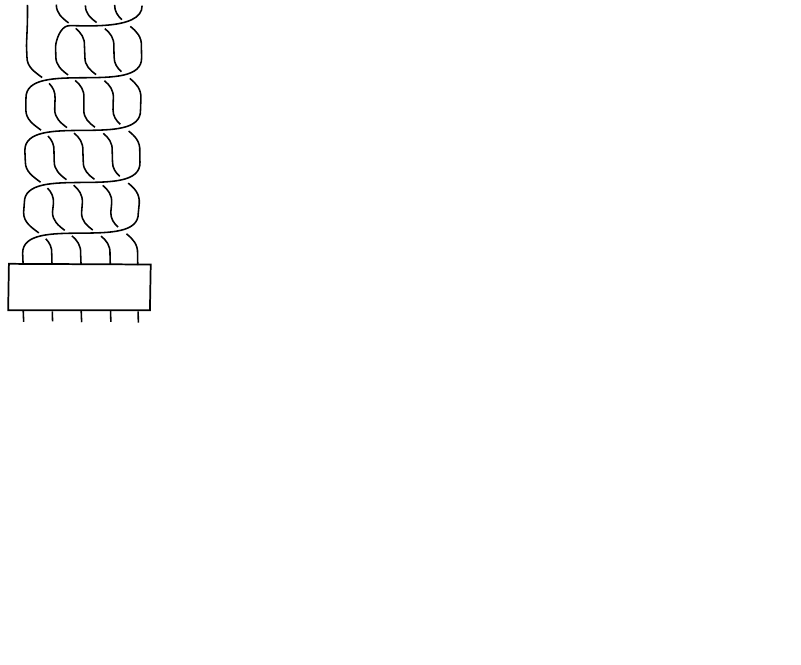
	\caption{The $2$-component links $J_i$.}
	\label{fig:Ji}
\end{figure}

\begin{figure}[ht] 
	\centering
	\def\svgwidth{0,8\columnwidth}
	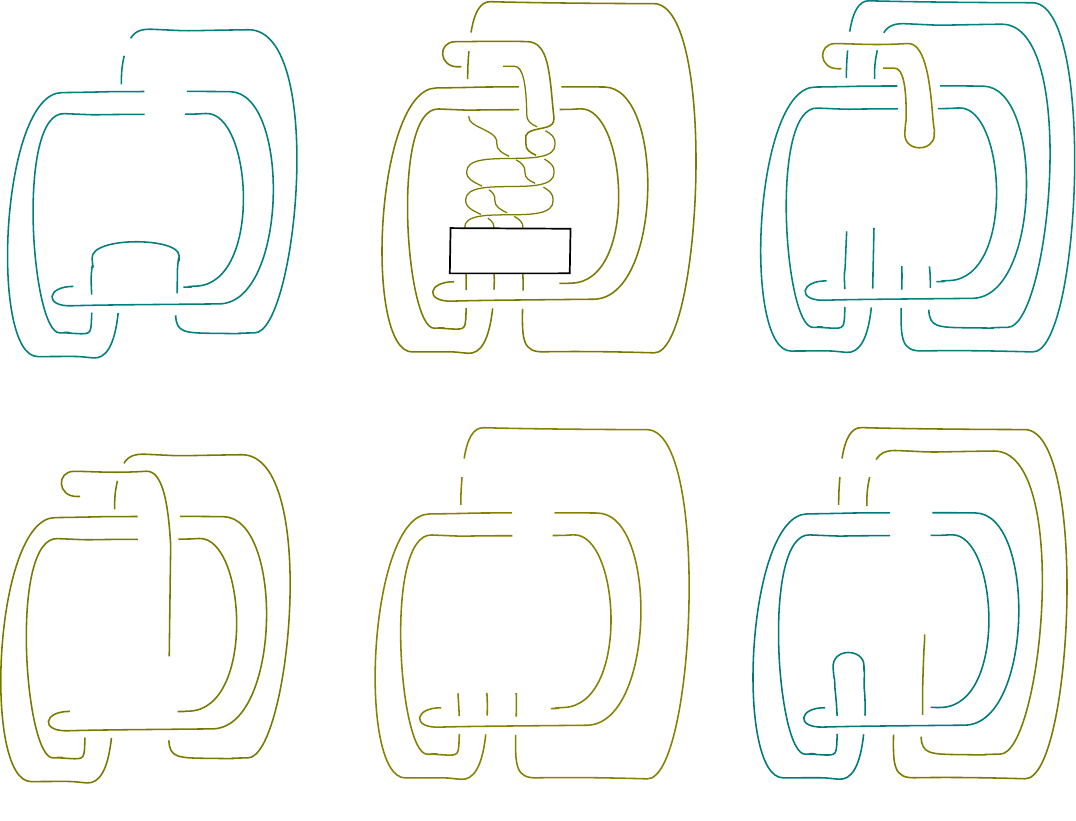
	\caption{The $2$-component links $I_i$.}
	\label{fig:Ii}
\end{figure}

{\scriptsize
\begin{table}[ht]
\begin{tabular}{ rl} 
 \midrule
  \midrule
	$F_{K_{-2}}$& 
	$=8v^{16} - 8v^{14} - 46v^{12} + 105v^{10} - 77v^8 + 12v^6 + 10v^4 - 5v^2 + 2$\\
	$F_{K'_{-2}}$&$=
	9v^{16} - 14v^{14} - 32v^{12} + 91v^{10} - 77v^8 + 26v^6 - 4v^4 + v^2 + 1$\\
    \midrule
	$F_{K_{-1}}=F_{K'_-1}$&$=
	3v^{12} - 2v^{10} - 18v^8 + 35v^6 - 23v^4 + 6v^2  - 1+ v^{-2}$\\
    \midrule
	$F_{K_0}=F_{K'_0}$&$=
	2v^4 - 6v^2  + 7- 2v^{-2}$\\
    \midrule
	$F_{K_1}=F_{K'_1}$&$=
	-v^4 + 2+ 18v^{-2} - 45v^{-4} + 38v^{-6} - 11v^{-8} $\\
    \midrule
	$F_{K_2}$&$=
	2v^2 - 11+ 27v^{-2} - 55v^{-4} + 122v^{-6} - 168v^{-8} + 113v^{-10} - 29v^{-12}$ \\
	$F_{K'_2}$&$=
	 3-15v^{-2} + 15v^{-4} + 52v^{-6} - 126v^{-8} + 99v^{-10} - 27v^{-12} $\\
     \midrule
	$F_{K_3}$&$=
	 5-20v^{-2} + 38v^{-4} - 23v^{-6} - 81v^{-8} + 272v^{-10} - 362v^{-12} + 227v^{-14} - 55v^{-16} $\\
	$F_{K'_3}$&$=
	 2-4v^{-2} + 10v^{-4} - 23v^{-6} - 11v^{-8} + 160v^{-10} - 278v^{-12} + 195v^{-14} - 50v^{-16} $\\
        \midrule
           \midrule
	$F_{J_1}$&$=\,(v^{-2}-1)\big( 1 + v^{-2n+2} -v^{-2n} -v^{-2n-2} + v^{-2n-4} -(n-2) v^{-4n+6} +(n-3)v^{-4n+4} $\\&$\,\,\,\,\,\,\,+(6n-2) v^{-4n+2} -(14n-4) v^{-4n} +11nv^{-4n-2} -(3n+1)v^{-4n-4}  \big)$\\
    \midrule
	$F_{J_2}$&$=\,(v^{-2}-1)\big( v^{2} -1 +v^{-2}  \big)$\\
    \midrule
	$F_{J_3}$&$=\,(v^{-2}-1)v^2\big( 1 -2v^{-2n+2} +6v^{-2n} -6v^{-2n-2} + 2v^{-2n-4} -(n-2) v^{-4n+2} +(8n-5)v^{-4n}$ \\&$\,\,\,\,\,\,\,-(12n-3) v^{-4n-2} +(8n+1) v^{-4n-4} -(2n+1)v^{-4n-6}   \big)$\\
    \midrule
	$F_{J_4}$&$=\,(v^{-2}-1)\big( 1 +v^{-2n+2} -v^{-2n} -v^{-2n-2} + v^{-2n-4}   \big)$\\
    \midrule
	$F_{J_5}$&$=\,(v^{-2}-1)\big( v^2 -1 +v^{-2} +(2n-4)v^{-2n+6} -(6n-9)v^{-2n+4} +(6n-6)v^{-2n+2} -(2n-1) v^{-2n}    \big)$\\
    \midrule
	$F_{J_6}$&$=\,(v^{-2}-1)v^{2-4n}\big( -(n+1)v^{2n+4} + (2n-2) v^{2n+2} -(2n-3) v^{2n}+(n-2) v^{2n-2} + v^{2n-4}$\\&$\,\,\,\,\,\,\,-(n-1)v^6+(3n-2)v^4-2nv^2-(2n-2)+(3n-1)v^{-2}-nv^{-4}\big)$\\
    \midrule
	$F_{J_7}$&$=\,(v^{-2}-1)v^{8-4n}\big( -(n-1)v^{2n} + n v^{2n-2}\big)\big( v^{-2}+v^{-4}-v^{-6}\big)$\\
    \midrule
	$F_{J_8}$&$=\,(v^{-2}-1)v^{4-4n}\big( -(n-1)v^{2n} + n v^{2n-2}\big)\big( 2v^{-2}-v^{-4}+2v^{-2n+2}-6v^{-2n}+6v^{-2n-2}-2v^{-2n-4}\big)$\\
    \midrule
	$F_{J_9}$&$=\,(v^{-2}-1)$\\
    \midrule
	$F_{J_{10}}$&$=\,(v^{-2}-1)v^{-4n}\big( -v^{2n+4} - (n-3) v^{2n+2} +(n-2)v^{2n}+v^{2n-2}$\big)\\
        \midrule
            \midrule
    	$F_{J'_1}$&$=\,(v^{-2}-1)\big( 1 -(n-1) v^{-4n+6} +(n-1)v^{-4n+4} +6n v^{-4n+2} -(14n+4) v^{-4n}$ \\&$\,\,\,\,\,\,\,+(11n+7)v^{-4n-2} -(3n+3)v^{-4n-4}  \big)$\\
    \midrule
	$F_{J'_2}$&$=\,(v^{-2}-1)\big( v^{-2} + v^{-2n}-v^{-2n-2}  \big)$\\
    \midrule
	$F_{J'_3}$&$=\,(v^{-2}-1)v^2\big( v^{-2} +v^{-2n} -2v^{-2n-4} +v^{-2n-6} -n v^{-4n+4} +(3n+1) v^{-4n+2} -(2n+1)v^{-4n}$ \\&$\,\,\,\,\,\,\,-(2n+3) v^{-4n-2} +(3n+5) v^{-4n-4} -(n+2)v^{-4n-6}   \big)$\\
    \midrule
	$F_{J_4'}$&$=\,(v^{-2}-1)\big( 1 +v^{-2n+4} -v^{-2n+2} -v^{-2n} + v^{-2n-2}   \big)$\\
    \midrule
	$F_{J_5'}$&$=\,(v^{-2}-1)\big( v^2 -1 +v^{-2} +(n-3)v^{-2n+6} -(3n-8)v^{-2n+4} +(2n-7)v^{-2n+2} +(2n+3) v^{-2n}$\\& $\,\,\,\,\,\,\,-(3n+2)v^{-2n-2}+(n+1)v^{-2n-4}    \big)$\\
    \midrule
	$F_{J'_6}$&$=\,(v^{-2}-1)v^{2-4n}\big( -v^{2n+4} + 3 v^{2n+2} -(n+2) v^{2n}+(n+1) v^{2n-2} - (2n-4)v^{4}$\\&$\,\,\,\,\,\,\,+(8n-14)v^2-(12n-18)+(8n-10)v^{-2}-(2n-2)v^{-4}\big)$\\
    \midrule
	$F_{J'_7}$&$=\,(v^{-2}-1)v^{8-4n}\big( -(n-2)v^{2n-2} + (n-1) v^{2n-4}\big)\big( 2v^{-2}-v^{-4}\big)$\\
    \midrule
	$F_{J'_8}$&$=\,(v^{-2}-1)v^{8-4n}\big( -(n-2)v^{2n-2} + (n-1) v^{2n-4}\big)\big( 2v^{-2}-v^{-4}+v^{-2n+2}-2v^{-2n}$\\&$\,\,\,\,\,\,\,+2v^{-2n-4}-v^{-2n-6}\big)$\\
    \midrule
	$F_{J'_9}$&$=\,(v^{-2}-1)$\\
    \midrule
	$F_{J'_{10}}$&$=\,(v^{-2}-1)v^{4-4n}\big( -v^{2n+4}  +3 v^{2n+2} -(n+2)v^{2n}+(n+1)v^{2n-2}\big)$\\
 \midrule \midrule
\end{tabular}
    \caption{The $F$ polynomials of the knots $K_n$ and $K_n'$ for small values of $n$, and for the links $J_i$ and $J_i'$.}
    \label{tab:tab_homfly_values}
\end{table} }

\let\MRhref\undefined
\bibliographystyle{hamsalpha}
\bibliography{friends.bib}

\newcommand{\etalchar}[1]{$^{#1}$}
\providecommand{\bysame}{\leavevmode\hbox to3em{\hrulefill}\thinspace}
\providecommand{\MR}{\relax\ifhmode\unskip\space\fi MR }
% \MRhref is called by the amsart/book/proc definition of \MR.
\providecommand{\MRhref}[2]{%
  \href{http://www.ams.org/mathscinet-getitem?mr=#1}{#2}
}
\providecommand{\href}[2]{#2}
\begin{thebibliography}{GHMR23}

\bibitem[ABG{\etalchar{+}}23]{ABG+19}
C.~Anderson, K.~L. Baker, X.~Gao, M.~Kegel, K.~Le, K.~Miller, S.~Onaran,
  G.~Sangston, S.~Tripp, A.~Wood, and A.~Wright, \emph{L-space knots with
  tunnel number {$>1$} by experiment}, Exp. Math. \textbf{32} (2023), 600--614.
  \MR{4669282}

\bibitem[AKW24]{Kegel_Weiss}
T.~Abe, M.~Kegel, and N.~Weiss, \emph{Complexity of equal 0-surgeries}, 2024,
  \href{http://arxiv.org/abs/2401.06015}{arXiv:2401.06015}.

\bibitem[APR89]{Mutations1}
R.~P. Anstee, J.~H. Przytycki, and D.~Rolfsen, \emph{Knot polynomials and
  generalized mutation}, Topology Appl. \textbf{32} (1989), 237--249.
  \MR{1007103}

\bibitem[BBPea]{Regina}
B.~Burton, R.~Budney, W.~Pettersson, and et~al., \emph{Regina: Software for
  low-dimensional topology}, \url{http://regina-normal.github.io}.

\bibitem[BC24]{Celoria_Grid}
A.~Barbensi and D.~Celoria, \emph{Grid{P}y{M}: a {P}ython module to handle grid
  diagrams}, J. Softw. Algebra Geom. \textbf{14} (2024), 31--39. \MR{4722170}

\bibitem[BK24]{Beker_Kegel_braid_positive}
K.~L. Baker and M.~Kegel, \emph{Census {L}-space knots are braid positive,
  except for one that is not}, Algebr. Geom. Topol. \textbf{24} (2024),
  569--586. \MR{4721376}

\bibitem[BKM24a]{BKM_alt}
K.~L. Baker, M.~Kegel, and D.~McCoy, \emph{The search for alternating
  surgeries}, 2024, \href{http://arxiv.org/abs/2409.09842}{arXiv:2409.09842}.

\bibitem[BKM24b]{BKM_QA}
K.~L. Baker, M.~Kegel, and D.~McCoy\vspace{0mm}, \emph{Quasi-alternating
  surgeries}, 2024, \href{http://arxiv.org/abs/2409.09839}{arXiv:2409.09839}.

\bibitem[BNMea]{knotatlas}
D.~Bar-Natan, S.~Morrison, and et~al., \emph{The {K}not {A}tlas: {T}he
  {M}athematica {P}ackage {K}not{T}heory},
  \url{http://katlas.org/wiki/The_Mathematica_Package_KnotTheory\%60}.

\bibitem[CDGW]{SnapPy}
M.~Culler, N.~Dunfield, M.~Goerner, and J.~Weeks, \emph{Snappy, a computer
  program for studying the geometry and topology of $3$-manifolds},
  \url{http://snappy.computop.org}.

\bibitem[CEK24]{Casals_Etnyre_Kegel}
R.~Casals, J.~Etnyre, and M.~Kegel, \emph{Stein traces and characterizing
  slopes}, Math. Ann. \textbf{389} (2024), 1053--1098. \MR{4745731}

\bibitem[Cul]{Gridlink}
M.~Culler, \emph{Gridlink: A tool for knot theorists},
  \url{http://homepages.math.uic.edu/~culler/gridlink/}.

\bibitem[DG04]{Ding_Geiges_Surgery}
F.~Ding and H.~Geiges, \emph{A {L}egendrian surgery presentation of contact
  3-manifolds}, Math. Proc. Cambridge Philos. Soc. \textbf{136} (2004),
  583--598. \MR{2055048}

\bibitem[DGS04]{Ding_Geiges_Stipsicz}
F.~Ding, H.~Geiges, and A.~I. Stipsicz, \emph{Surgery diagrams for contact
  3-manifolds}, Turkish J. Math. \textbf{28} (2004), 41--74. \MR{2056760}

\bibitem[DIPY04]{Mutations2}
M.~K. Dabkowski, M.~Ishiwata, J.~H. Przytycki, and A.~Yasuhara, \emph{Signature
  of rotors}, Fund. Math. \textbf{184} (2004), 79--97. \MR{2128044}

\bibitem[DK16]{Durst_Kegel_rot_surgery}
S.~Durst and M.~Kegel, \emph{Computing rotation and self-linking numbers in
  contact surgery diagrams}, Acta Math. Hungar. \textbf{150} (2016), 524--540.
  \MR{3568107}

\bibitem[DOR22]{Dunfield_exterior_to_link}
N.~M. Dunfield, M.~Obeidin, and C.~G. Rudd, \emph{Computing a link diagram from
  its exterior}, 38th {I}nternational {S}ymposium on {C}omputational
  {G}eometry, LIPIcs. Leibniz Int. Proc. Inform., vol. 224, Schloss Dagstuhl.
  Leibniz-Zent. Inform., Wadern, 2022, pp.~Art. No. 37, 24. \MR{4470916}

\bibitem[Dun20]{Dunfield_census}
N.~M. Dunfield, \emph{A census of exceptional {D}ehn fillings}, Characters in
  low-dimensional topology, Contemp. Math., vol. 760, Amer. Math. Soc.,
  [Providence], RI, [2020] \copyright 2020, pp.~143--155. \MR{4193924}

\bibitem[EKO23]{EKS_contact_surgery_numbers}
J.~B. Etnyre, M.~Kegel, and S.~Onaran, \emph{Contact surgery numbers}, J.
  Symplectic Geom. \textbf{21} (2023), no.~6, 1255--1333. \MR{4767855}

\bibitem[Etn13]{Etnyre_OT}
J.~B. Etnyre, \emph{On knots in overtwisted contact structures}, Quantum Topol.
  \textbf{4} (2013), 229--264. \MR{3073563}

\bibitem[Gei08]{Geiges_book}
H.~Geiges, \emph{An introduction to contact topology}, Cambridge Studies in
  Advanced Mathematics, vol. 109, Cambridge University Press, Cambridge, 2008.
  \MR{2397738}

\bibitem[GHMR23]{Searching_ribbon}
S.~Gukov, J.~Halverson, C.~Manolescu, and F.~Ruehle, \emph{Searching for
  ribbons with machine learning}, 2023,
  \href{http://arxiv.org/abs/2304.09304}{arXiv:2304.09304}.

\bibitem[Gom98]{Gompf_Stein}
R.~E. Gompf, \emph{Handlebody construction of {S}tein surfaces}, Ann. of Math.
  (2) \textbf{148} (1998), 619--693. \MR{1668563}

\bibitem[GS10]{canonical_surgery}
F.~Gu\'eritaud and S.~Schleimer, \emph{Canonical triangulations of {D}ehn
  fillings}, Geom. Topol. \textbf{14} (2010), 193--242. \MR{2578304}

\bibitem[Ito22]{Ito_HOMFLY0}
T.~Ito, \emph{An obstruction of {G}ordian distance one and cosmetic crossings
  for genus one knots}, New York J. Math. \textbf{28} (2022), 175--181.
  \MR{4374147}

\bibitem[Keg17]{phdthesis}
M.~Kegel, \emph{Legendrian knots in surgery diagrams and the knot complement
  problem}, Ph.D. thesis, Universität zu K\"oln, 2017.

\bibitem[Keg18]{Legendrian_knot_complement}
M.~Kegel, \emph{The {L}egendrian knot complement problem}, J. Knot Theory
  Ramifications \textbf{27} (2018), 1850067, 36. \MR{3896311}

\bibitem[KP]{data}
M.~Kegel and L.~Piccirillo, \emph{Code and data to accompany this paper},
  \url{https://www.mathematik.hu-berlin.de/~kegemarc/HopfRBG.html}.

\bibitem[KRS{\etalchar{+}}24]{volt}
M.~Kegel, A.~Ray, J.~Spreer, E.~Thompson, and S.~Tillmann, \emph{On a volume
  invariant of 3-manifolds}, 2024,
  \href{http://arxiv.org/abs/2402.04839}{arXiv:2402.04839}.

\bibitem[Lic77]{Lickorish:surgery}
W.~B.~Raymond Lickorish, \emph{Surgery on knots}, Proc. Am. Math. Soc.
  \textbf{60} (1977), 296--298 (English).

\bibitem[OS04]{Ozbagci_Stipsicz_book}
B.~Ozbagci and A.~I. Stipsicz, \emph{Surgery on contact 3-manifolds and {S}tein
  surfaces}, Bolyai Society Mathematical Studies, vol.~13, Springer-Verlag,
  Berlin; J\'{a}nos Bolyai Mathematical Society, Budapest, 2004. \MR{2114165}

\bibitem[Pao05a]{covers3}
L.~Paoluzzi, \emph{Three cyclic branched covers suffice to determine hyperbolic
  knots}, J. Knot Theory Ramifications \textbf{14} (2005), 641--655.
  \MR{2162118}

\bibitem[Pao05b]{covers1}
L.\vspace{0cm} Paoluzzi, \emph{Hyperbolic knots and cyclic branched covers},
  Publ. Mat. \textbf{49} (2005), 257--284. \MR{2177068}

\bibitem[Pic19]{LPshake}
Lisa Piccirillo, \emph{Shake genus and slice genus}, Geom. Topol. \textbf{23}
  (2019), no.~5, 2665--2684 (English).

\bibitem[Sag]{sagemath}
{The Sage Developers}, \emph{{S}age{M}ath, the {S}age {M}athematics {S}oftware
  {S}ystem}, \url{https://www.sagemath.org}.

\bibitem[Sch]{KnotJob}
D.~Schütz, \emph{Knot{J}ob},
  \url{https://www.maths.dur.ac.uk/users/dirk.schuetz/knotjob.html}.

\bibitem[SW95]{Weeks_canonical}
M.~Sakuma and J.~Weeks, \emph{Examples of canonical decompositions of
  hyperbolic link complements}, Japan. J. Math. (N.S.) \textbf{21} (1995),
  393--439. \MR{1364387}

\bibitem[Swe]{KLO}
F.~Swenton, \emph{{KLO} ({K}not-{L}ike {O}bjects)},
  \url{http://KLO-Software.net}.

\bibitem[Sza]{Szabo_calculator}
Z.~Szab\'{o}, \emph{Knot floer homology calculator},
  \url{https://web.math.princeton.edu/~szabo/HFKcalc.html}.

\bibitem[Zim98]{covers2}
B.~Zimmermann, \emph{On hyperbolic knots with homeomorphic cyclic branched
  coverings}, Math. Ann. \textbf{311} (1998), 665--673. \MR{1637960}

\end{thebibliography}

\end{document}